\newtheorem{tm}{Theorem}
\newtheorem{defi}{Definition}
\newtheorem{rem}{Remark}
\newtheorem{rems}{Remarks}
\newtheorem{conj}{Conjecture}
\newtheorem{lm}{Lemma}
\newtheorem{prop}{Proposition}
\newtheorem{nota}{Notation}
\begin{document}

\title{On the zero set of the partial theta function}
\author{Vladimir Petrov Kostov}
\address{Universit\'e C\^ote d’Azur, CNRS, LJAD, France} 
\email{vladimir.kostov@unice.fr}
\begin{abstract}
We consider the partial theta function 
$\theta (q,x):=\sum _{j=0}^{\infty}q^{j(j+1)/2}x^j$, where 
$q\in (-1,0)\cup (0,1)$ and either $x\in \mathbb{R}$ or $x\in \mathbb{C}$. 
We prove that for $x\in \mathbb{R}$, in each of the 
two cases $q\in (-1,0)$ and $q\in (0,1)$, its zero set consists of 
countably-many smooth curves in the $(q,x)$-plane each of which 
(with the exception of one curve for $q\in (-1,0)$) has a single point with 
a tangent line parallel to the $x$-axis. These points define double zeros 
of the function $\theta (q,.)$; their $x$-coordinates belong to the interval 
$[-38.83\ldots ,-e^{1.4}=4.05\ldots )$ for $q\in (0,1)$ and to the interval 
$(-13.29,23.65)$ for $q\in (-1,0)$. For $q\in (0,1)$, infinitely-many of the 
complex conjugate 
pairs of zeros to which the double zeros give rise cross the imaginary axis 
and then remain in the half-disk $\{ |x|<18$, Re\,$x>0\}$. For $q\in (-1,0)$, 
complex conjugate pairs do not cross the imaginary axis.

{\bf Key words:} partial theta function, Jacobi theta function, 
Jacobi triple product\\ 

{\bf AMS classification:} 26A06
\end{abstract}
\maketitle 

\section{Introduction}

We consider the bivariate series $\theta (q,x):=\sum _{j=0}^{\infty}q^{j(j+1)/2}x^j$ 
which converges for $q\in (-1,1)$, $x\in \mathbb{C}$, and defines (for each 
fixed value of the parameter $q$) an entire function in $x$. We refer to 
$\theta$ as to a {\em partial theta function}. The terminology is justified 
by the fact that the series $\Theta (q,x):=\sum _{j=-\infty}^{\infty}q^{j^2}x^j$ 
defines the Jacobi theta function, and one has 
$\theta (q^2,x/q)=\sum _{j=0}^{\infty}q^{j^2}x^j$. The word ``partial'' hints at the 
fact that summation in $\theta$ is only partial (not from $-\infty$ to 
$\infty$, but only from $0$ to $\infty$). The function 
$\theta$ satisfies 
the differential equation

\begin{equation}\label{eqdiff}
2q\partial \theta /\partial q=x(\partial ^2/\partial x^2)(x\theta)=
x^2\partial ^2\theta /\partial x^2+2x\partial \theta /\partial x
\end{equation}
and the functional equation

\begin{equation}\label{eqfunct}
\theta (q,x)=1+qx\theta (q,qx)~.
\end{equation}

The interest in the function $\theta$ is explained 
by its applications in different 
areas. One of the most recent of them is about 
section-hyperbolic polynomials, i.e. 
real polynomials in one variable of degree $\geq 2$ having only 
real negative roots  
and such that when one deletes 
their highest-degree monomial, one obtains again a polynomial with all roots 
real negative. How $\theta$ arises in the context of such 
polynomials is explained in \cite{KoSh}. The explanation uses the notion 
of the {\em spectrum} of $\theta$ 
(see Section~\ref{secproperties}). The research on section-hyperbolic 
polynomials continued the activity in this domain which was marked by papers 
\cite{KaLoVi} and \cite{Ost}, and which were inspired by earlier results of 
Hardy, Petrovitch and Hutchinson 
(see \cite{Ha}, \cite{Pe} and \cite{Hu}). Section-hyperbolic polynomials 
are real, therefore the case when the parameter $q$ is real is of 
particular interest. The case $q\in \mathbb{C}$, $|q|<1$ (which 
has been studied by the author in \cite{KoPRSE1}, \cite{KoFAA} and 
\cite{KoDBAN2}) is not considered in the present paper.

The partial theta function is used in other domains as well. 
Such are asymptotic analysis (see \cite{BeKi}), statistical physics 
and combinatorics (see \cite{So}),   
Ramanujan-type $q$-series 
(see \cite{Wa}) and the theory 
of (mock) modular forms (see \cite{BrFoRh}); see also~\cite{AnBe}. 
Recently, new asymptotic results for Jacobi 
partial and false theta functions have been proved in~\cite{BFM}. They 
originate from Jacobi 
forms and find applications when considering the asymptotic expansions of 
regularized characters and 
quantum dimensions of the $(1,p)$-singlet algebra modules. The article 
\cite{CMW} is a closely related 
paper dealing with modularity, asymptotics and other properties 
of partial and false theta 
functions which are treated in the framework of  
conformal field theory and representation theory.
 
The present paper studies properties of the zero set of $\theta$. The case 
$q=0$ being trivial (with $\theta (0,x)\equiv 1$) one has to study in fact two 
different cases, namely $q\in (0,1)$ and $q\in (-1,0)$,  
in which the results are formulated in different ways. We present three 
different kinds of results. In Section~\ref{secproperties} 
we describe the set of 
real zeros of $\theta$ as a union of smooth curves in the $(q,x)$-space, 
see Theorem~\ref{tmgeom}. These results are further developed 
in Section~\ref{secfphikcor} by means of properties of certain 
functions in one variable; these properties are proved in 
Section~\ref{secfphik}. 

It is known that for each $q\in (-1,0)\cup (0,1)$ fixed, $\theta (q,.)$ has 
either only simple zeros or simple zeros and one double zero, 
see Theorems~\ref{tmknown1} and \ref{tmknown2}.    
In Section~\ref{secdouble} we prove that for $q\in (0,1)$, 
all double zeros of $\theta$ belong to 
the interval $[-38.83960007\ldots ,-4.055199967\ldots )$  
(Theorem~\ref{tmestim1}); for $q\in (-1,0)$, they belong to the interval 
$(-13.29,23.65)$ (Theorem~\ref{tmestim2}). 
In Section~\ref{seccomplex} we describe the behaviour 
of the complex conjugate pairs of $\theta (q,.)$. We show in 
Subsection~\ref{subseccomplexqneg} that in the case $q\in (-1,0)$, 
complex conjugate pairs do not cross the imaginary axis 
(Theorem~\ref{tmcplxneg}); hence each zero of $\theta$ 
remains in the left or right half-plane for all $q\in (-1,0)$. 
In Subsection~\ref{subseccomplexqpos} 
we show that as $q$ increases in $(0,1)$, infinitely-many 
complex conjugate pairs of 
$\theta$ go to the right half-plane, 
and after this remain in the half-disk $\{ |x|<18$, Re\,$x>0\}$.

\section{Geometry of the zero set of $\theta$
\protect\label{secproperties}}

First of all, we recall 
some known results in the case $q\in (0,1)$ (see \cite{KoBSM1}): 

\begin{tm}\label{tmknown1}
(1) For $q\in (0,\tilde{q}_1:=0.3092\ldots )$, all zeros of $\theta (q,.)$ are 
real, negative and distinct: $\cdots <\xi _2<\xi _1<0$. 

(2) There exist countably-many values 
$0<\tilde{q}_1<\tilde{q}_2<\cdots <1$ of $q$, 
where $\tilde{q}_j\rightarrow 1^-$ as $j\rightarrow \infty$, for which 
$\theta (q,.)$ has a multiple real zero $y_j$. For any $j\in \mathbb{N}$, 
this is the rightmost of the real zeros of $\theta$; 
it is a double zero of $\theta$. 
 
(3) For $q\in (\tilde{q}_j,\tilde{q}_{j+1})$ (we set $\tilde{q}_0:=0$), the 
function $\theta (q,.)$ has exactly $j$ complex conjugate pairs of zeros 
(counted with multiplicity). 
\end{tm}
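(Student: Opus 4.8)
The plan is to treat $\theta(q,\cdot)$, for fixed $q\in(0,1)$, as an entire function of order $0$ with strictly positive Taylor coefficients $a_j=q^{j(j+1)/2}$, and to follow its zeros as $q$ increases from $0$ to $1$. First I would record the two structural facts that make everything run: because all $a_j>0$, one has $\theta(q,x)>0$ for $x\ge 0$, so every real zero is negative; and because the coefficients are real, the non-real zeros occur in complex conjugate pairs. Hadamard factorization (genus $0$) writes $\theta(q,x)=\prod_k\bigl(1-x/z_k(q)\bigr)$, and the super-exponential decay of the $a_j$ gives a coefficient bound showing that on any compact $q$-interval no zero escapes to or enters from infinity. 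Hence each $z_k(q)$ depends continuously on $q$, and the number of zeros in a fixed large disc is locally constant in $q$ except where zeros collide.

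For part (1) I would start from Hutchinson's criterion: the ratios $a_{j-1}a_{j+1}/a_j^2=q$ are all equal to $q$, so for $q\le 1/4$ the function lies in the Laguerre--P\'olya class and has only simple, real, negative zeros. To push realness up to the sharp threshold I would use that a conjugate pair can be created only by the coalescence of two real zeros, i.e. at a value of $q$ for which the system $\theta=0$, $\partial\theta/\partial x=0$ has a common (necessarily negative) solution. Defining $\tilde q_1$ as the infimum of such values, and combining the absence of collisions on $(0,\tilde q_1)$ with continuity from the Hutchinson region, realness and simplicity persist throughout $(0,\tilde q_1)$, which is statement (1); the numerical value $\tilde q_1=0.3092\ldots$ comes from locating the smallest positive root of the resultant condition, justified by the monotone one-variable reduction via the $\phi_k$ of Section~\ref{secfphik}.

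The engine for (2) and (3) is the differential equation (\ref{eqdiff}). Along a smooth branch $x(q)$ of zeros one has $x'(q)=-(\partial\theta/\partial q)/(\partial\theta/\partial x)$, which is well defined while the zero is simple. At a double zero, where $\partial\theta/\partial x=0$ and (granting exactly double) $\partial^2\theta/\partial x^2\ne 0$, (\ref{eqdiff}) reduces to $2q\,\partial\theta/\partial q=x^2\,\partial^2\theta/\partial x^2$, whence $\frac{\partial\theta/\partial q}{\partial^2\theta/\partial x^2}=\frac{x^2}{2q}>0$. The local expansion $\theta\approx \partial_q\theta\,(q-q_0)+\frac{1}{2}\partial_x^2\theta\,(x-x_0)^2$ then has two real roots for $q<q_0$ and a conjugate pair for $q>q_0$. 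The decisive point is that this sign is positive at \emph{every} double zero, so every fold has the same orientation: as $q$ increases a real pair turns complex and never the reverse. Thus the number of complex conjugate pairs is non-decreasing, jumping by exactly the number of zeros coalescing; granting that each $\tilde q_j$ carries a single double zero, the count rises by exactly one, which is (3). For the rightmost-coalescence claim in (2) I would use the functional equation (\ref{eqfunct}): at a zero $x_0$ it gives $\theta(q,qx_0)=-1/(qx_0)$, a relation between consecutive zeros from which one reads off that the two largest real zeros are the first to meet.

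The hard part will be the global statements that local analysis does not settle. One must show that the folds never overlap (no two pairs coalesce at the same $q$, and no fold is a triple zero, a case (\ref{eqdiff}) reduces to $\partial\theta/\partial q=0$), so that the jumps are genuinely $+1$; and, most substantially, that there are infinitely many folds with $\tilde q_j\to 1^-$. For the accumulation I would exploit the $q\to 1^-$ asymptotics of $\theta$, using the comparison with the Jacobi theta function $\Theta$ (whose zeros are all real and negative, by the triple product) to force arbitrarily many rightmost real pairs to have gone complex before $q$ reaches $1$. The quantitative control needed to separate the successive $\tilde q_j$, to keep each coalescence simple, and to confirm $y_j$ is the rightmost zero, is exactly what the monotonicity and sign properties of the one-variable functions $\phi_k$ of Section~\ref{secfphik} are designed to supply.
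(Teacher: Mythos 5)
This theorem is not proved in the present paper at all: it is quoted as a known result, with the proof in \cite{KoBSM1} (and the numerical value of $\tilde{q}_1$ coming from \cite{KoSh}), so there is no internal proof to compare your attempt against. Judged on its own, your proposal does identify the correct local mechanism: Hutchinson's criterion (the ratio $a_{j-1}a_{j+1}/a_j^2=q$, giving real negative zeros for $q\le 1/4$), and the sign computation from (\ref{eqdiff}) at a double zero, $2q\,\partial\theta/\partial q=x^2\,\partial^2\theta/\partial x^2$, which shows every coalescence turns a real pair into a conjugate pair as $q$ increases and never the reverse. This one-way fold orientation is indeed the engine used in \cite{KoBSM1} and reused in this paper (e.g. in the proof of Theorem~\ref{tmgeom}).

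However, the proposal does not prove the theorem; it reduces it to precisely the statements that carry all the weight, and for those it offers only directions. (i) The existence of infinitely many spectral values with $\tilde{q}_j\to 1^-$ is the hardest part; your suggestion to compare $\theta$ with the Jacobi theta function $\Theta$ as $q\to 1^-$ is a heuristic, not an argument --- the discarded tail $G=\Theta -\theta$ is not negligible in that limit, and making such a comparison rigorous is exactly the content of \cite{KoBSM1} and of the asymptotic analysis in \cite{KoRMC}. (ii) The exact count in part (3) requires that each spectral value carries exactly one double zero (no triple zero, no two pairs coalescing simultaneously); you explicitly ``grant'' this. Note that ruling out a triple zero does not follow from (\ref{eqdiff}) alone: at a triple zero (\ref{eqdiff}) only forces $\partial\theta/\partial q=0$, which is not by itself contradictory. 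The known proofs instead use that the value of $\theta$ at a local minimum is strictly increasing in $q$ (again by (\ref{eqdiff})), and that same fact is what yields (iii) the claim that the double zero is the rightmost real zero. Your functional-equation remark ($\theta(q,qx_0)=-1/(qx_0)$ at a zero $x_0$) does not ``read off'' that the two rightmost zeros are the first to meet; no ordering argument is given. Finally, your continuity step (no zeros entering from infinity over compact $q$-intervals) needs a uniform bound on the moduli of the zeros, such as Proposition~9 of \cite{KoBSM1} provides; a generic appeal to a ``coefficient bound'' is not enough. So the skeleton is right, but the substance of the theorem --- infinitude and accumulation of the $\tilde{q}_j$, exactness and simple-fold nature of each collision, and the rightmost position of the double zero --- remains unproved.
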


\begin{defi}
{\rm We call {\em spectrum} of $\theta$ the set of values of $q$ 
for which $\theta (q,.)$ has at least one multiple zero. This notion is 
introduced by B.~Z.~Shapiro in~\cite{KoSh}.}
\end{defi}

\begin{rems}\label{remsdivers1}
{\rm (1) The zeros of $\theta$ depend continuously on $q$. Due to this, for 
$q\in (0,\tilde{q}_j)$, the order of its zeros 
$\cdots <\xi _{2j}<\xi _{2j-1}<0$ on the real line is well-defined. For no 
$q\in (0,1)$ does $\theta (q,.)$ have a nonnegative zero. 
For $q=\tilde{q}_j$, the zeros $\xi _{2j-1}$ and $\xi _{2j}$ 
coalesce and then become a complex conjugate pair for $q=(\tilde{q}_j)^+$; 
thus the indices $2j-1$ and $2j$ of the real zeros are meaningful exactly when 
$q\in (0,\tilde{q}_j]$. For $q\in (\tilde{q}_j,\tilde{q}_{j+1})$, one has 

$$\left\{ \begin{array}{lll}
\theta (q,x)>0&{\rm for}&
x\in (\xi _{2j+1},\infty )\cup 
(\cup _{k=j+1}^{\infty}(\xi _{2k+1},\xi _{2k}))\\ \\ 
\theta (q,x)<0&{\rm for}&x\in (\cup _{k=j+1}^{\infty}(\xi _{2k},\xi _{2k-1})~.
\end{array}\right.$$

(2) In the above setting, one has $-q^{-2j-2}<\xi _{2j+2}<\xi _{2j+1}<-q^{-2j-1}$, 
see Proposition~9 in~\cite{KoBSM1}.

(3) The function $\theta (\tilde{q}_j,.)$ has a local minimum 
at its double zero $y_j$. One has $\tilde{q}_j=1-\pi /2j+o(1/j)$ and 
$y_j=-e^{\pi}+o(1)$, where $e^{\pi}=23.14\ldots$, see \cite{KoRMC} 
or~\cite{KoBSM2}. Up to the sixth decimal, the first six spectral values 
$\tilde{q}_j$ 
equal $0.309249$, $0.516959$, $0.630628$, $0.701265$, $0.749269$, $0.783984$, 
see \cite{KoSh}.}
\end{rems} 

The analog of Theorem~\ref{tmknown1} in the case 
$q\in (-1,0)$ reads (see \cite{KoPRSE2}): 

\begin{tm}\label{tmknown2}
(1) For any $q\in (-1,0)$, the function $\theta (q,.)$ has infinitely-many 
negative and infinitely-many positive zeros. 

(2) There exists a sequence of values $\bar{q}_j$ of $q$ tending to $-1^+$ 
for which the function $\theta (\bar{q}_j,.)$ has a double real 
zero $\bar{y}_j$ (the rest of its real zeros being simple). For the rest of the 
values of $q\in (-1,0)$, $\theta (q,.)$ has no multiple real zeros. For 
$j$ large enough, one has $-1<\bar{q}_{j+1}<\bar{q}_j<0$. 

(3) For $j$ odd, one has $\bar{y}_j<0$, $\theta (\bar{q}_j,.)$ 
has a local minimum at $\bar{y}_j$ and $\bar{y}_j$ is the rightmost of the 
negative zeros of $\theta (\bar{q}_j,.)$. For $j$ even, 
one has $\bar{y}_j>0$, $\theta (\bar{q}_j,.)$ 
has a local maximum at $\bar{y}_j$ and $\bar{y}_j$ is the 
second from the left of the positive zeros of $\theta (\bar{q}_j,.)$.

(4) For $j$ sufficiently large and for $q\in (\bar{q}_{j+1},\bar{q}_j)$, 
the function $\theta (q,.)$ has exactly $j$ complex conjugate pairs of 
zeros counted with multiplicity.
\end{tm}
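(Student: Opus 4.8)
The plan is to handle the four assertions in turn, with the functional equation~(\ref{eqfunct}) and the asymptotic localization of the zeros of large modulus serving as the main tools, supplemented by a deformation argument in the parameter $q$ for the counting statements in parts (2)--(4).

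For part (1) I would first show that the zeros of large modulus cluster near the points $x=-q^{-k}$. Inserting $x=-q^{-k}(1+\varepsilon )$ into the series and comparing magnitudes of the monomials $q^{j(j+1)/2}x^j$, the dominant balance occurs for $j$ near $k$, so for each sufficiently large $k$ a Rouch\'e comparison on a small disc centred at $-q^{-k}$ (or, on the real axis, an intermediate value argument via~(\ref{eqfunct})) isolates exactly one real zero $\zeta _k$ with $\zeta _k=-q^{-k}(1+o(1))$. Since $q\in (-1,0)$ forces $q^{-1}<-1$, the quantity $-q^{-k}$ has sign $(-1)^{k+1}$; hence the $\zeta _k$ alternate in sign and produce infinitely many positive and infinitely many negative zeros.

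For parts (2) and (3) I would analyse the system $\theta (q,x)=\partial \theta /\partial x(q,x)=0$, whose real solutions are precisely the multiple real zeros. Away from such points the set of real zeros is, by the implicit function theorem, a union of smooth branches $x=\zeta _k(q)$, and a double zero arises exactly where a branch has a horizontal tangent, i.e. where $\partial \theta /\partial x$ vanishes along it. I would show that as $q\to -1^+$ each branch develops one such turning point, generating the sequence $\bar q_j\to -1^+$ with double zeros $\bar y_j$; the sign of $\partial ^2\theta /\partial x^2$ at $\bar y_j$ separates a local minimum from a local maximum, while the alternation of signs of the $\zeta _k$ established in part (1) pins down the parity statement ($\bar y_j<0$ with a minimum for $j$ odd, $\bar y_j>0$ with a maximum for $j$ even) together with the ordinal position of $\bar y_j$ among the real zeros. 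Discreteness of the spectrum in $(-1,0)$, and the absence of other multiple real zeros, would follow from the transversality of these turning points.

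For part (4) I would use a continuity argument in $q$: as $q$ decreases through each $\bar q_j$, the two real zeros meeting at $\bar y_j$ leave the real axis as a single complex conjugate pair, every other zero persisting. Applying the argument principle to $\theta (q,\cdot )$ on a large circle and subtracting the number of surviving real zeros then yields exactly $j$ complex conjugate pairs for $q\in (\bar q_{j+1},\bar q_j)$ once $j$ is large. The principal obstacle lies in the global bookkeeping underlying parts (2)--(4): one must exclude any competing mechanism for the loss of a real zero---a zero escaping to infinity, or two turning points occurring for the same $q$---which demands uniform control of the branches $\zeta _k(q)$ over the entire interval, not merely their asymptotics as $q\to -1^+$. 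Securing this uniform control, rather than any individual local estimate, is where the main difficulty resides.
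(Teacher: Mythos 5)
Your proposal does not contain a proof of parts (2)--(4); it is a plan whose hard steps are left open, and you say so yourself in the final paragraph. The missing idea is exactly the one on which the argument of \cite{KoPRSE2} (this theorem is not proved in the present paper: it is quoted from that reference, and its mechanism is reproduced in the proof of Theorem~\ref{tmestim2}) rests: set $v:=-q\in (0,1)$ and split $\theta$ into its even and odd parts in $x$,
$$\theta (-v,x)=\psi _1(v,x)+\psi _2(v,x)~,\qquad \psi _1:=\theta (v^4,-x^2/v)~,\quad \psi _2:=-vx\,\theta (v^4,-vx^2)~,$$
see (\ref{eqpsi1psi2}). This reduces the case $q\in (-1,0)$ to the already known Theorem~\ref{tmknown1} for the \emph{positive} parameter $v^4$: the real zeros $y_{\pm k}$, $z_{\pm k}$ of $\psi _1$ and $\psi _2$ are square roots of the known real zeros of $\theta (v^4,\cdot )$, they interlace with the sign pattern (\ref{eqfirststring}), and each sign change $+-$ or $-+$ traps a real zero of $\theta (-v,\cdot )$. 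When $v^4$ passes a spectral value $\tilde{q}_{\nu}$, the relevant pairs of zeros of $\psi _1$ and of $\psi _2$ coalesce, the trapping interval disappears, and two real zeros of $\theta (-v,\cdot )$ are forced to collide at some $v_0<(\tilde{q}_{\nu})^{1/4}$: this is what produces the sequence $\bar{q}_j\to -1^+$, and it is the evenness/oddness of $\psi _1,\psi _2$ that yields the parity statements of part (3) (negative double zero with a local minimum versus positive double zero with a local maximum) and, together with the count of complex pairs of $\theta (v^4,\cdot )$, part (4). In other words, the ``uniform control of the branches over the entire interval'' that you correctly identify as the principal obstacle is not bookkeeping to be added later; it is the substance of the theorem, and it is obtained by this transfer from the positive-$q$ case, not by the implicit-function-theorem picture you describe, which merely restates what a double zero is.

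Two further points. First, even in part (1) your Rouch\'e argument as stated is delicate: comparing $\theta$ with the two dominant monomials on a circle of radius $\varepsilon |q|^{-k}$ fails for fixed $q$ and large $k$, because the neighbouring monomials are smaller only by a factor $|q|$ while $|x^k|$ oscillates by $((1+\varepsilon )/(1-\varepsilon ))^k$ along the circle; the comparison that works is $\theta =\Theta -G$ with $\Theta$ the full Jacobi theta function, whose zeros are known exactly from the triple product (\ref{eqJac}) and against which the tail $G$ is genuinely small, or else the sign-trapping argument via $\psi _1,\psi _2$ above. Second, your deformation argument for part (4) presupposes that the only way a real zero is lost is through the turning points whose existence and uniqueness you have not established; in the decomposition approach this is automatic, since the number of complex conjugate pairs of $\theta (-v,\cdot )$ is controlled by the number of such pairs of $\theta (v^4,\cdot )$, which is given by part (3) of Theorem~\ref{tmknown1}.
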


For $q\in (-1,0)$,  
the first six spectral values $\bar{q}_j$ equal (up to the sixth decimal) 
$-0.727133$, $-0.783742$, 
$-0.841601$, $-0.861257$, $-0.887952$ and $-0.897904$, see~\cite{KoPRSE2}.

\begin{rem}
{\rm For $q\in (-1,0)$ sufficiently close to $0$, all zeros of $\theta (q,.)$ 
are real. We denote them by $\cdots <\zeta _2<\zeta _1<0$ and 
$0<\eta _1<\eta _2<\cdots$. For $j=2\nu -1$ (resp. for $j=2\nu$), 
$\nu \in \mathbb{N}$, the zeros $\zeta _{2\nu -1}$ and $\zeta _{2\nu}$ 
(resp. $\eta _{2\nu}$ and $\eta _{2\nu +1}$) coalesce 
at $\bar{y}_{2\nu -1}$ when $q=\bar{q}_{2\nu -1}$ (resp. at $\bar{y}_{2\nu}$ 
when $q=\bar{q}_{2\nu}$). 
Thus the zero $\eta _1$ remains 
real positive and simple for all $q\in (-1,0)$.
This is deduced in \cite{KoPRSE2}, from the order of the 
quantities $\zeta _j$, $q\zeta _j$, $\eta _k$ and $q\eta _k$ 
on the real line (see Fig.~3 in~\cite{KoPRSE2}; the notation used 
in~\cite{KoPRSE2} is not the one we use here):}

$$\begin{array}{ccccccccccccccccc} 
\cdots &<&\zeta _4&<&\zeta _3&<&q\eta _4&<&q\eta _3&<&\zeta _2&<&\zeta _1&<
&q\eta _2&<&0\\ \\ 
0&<&\eta _1&<&q\zeta _1&<&q\zeta _2&<&\eta _2&<&\eta _3&
<&q\zeta _3&<&q\zeta _4&<&\cdots~~.
\end{array}$$
\end{rem}

Our first result is formulated as follows: 

\begin{tm}\label{tmgeom}
(1) Suppose that $q\in (0,1)$. For $j=1$, $2$, $\ldots$, consider 
the zeros $\xi _{2j-1}$ and $\xi _{2j}$ as functions in 
$q\in (0,\tilde{q}_j]$. Their 
two graphs together (in the $(q,x)$-plane) form a smooth curve $\Gamma _j$ 
having 
two parabolic branches $B_{2j-1}$ and $B_{2j}$ which are 
asymptotically equivalent to $x=-q^{-2j+1}$ and 
$x=-q^{-2j}$ as $q\rightarrow 0^+$. The curve $\Gamma _j$ 
has a single point $X_j$, namely for 
$q=\tilde{q}_j$, at which 
the tangent line is parallel to the $x$-axis.

(2) Suppose that $q\in (-1,0)$. For $\nu =1$, $2$, $\ldots$, consider 
the zeros $\zeta _{2\nu -1}$ and $\zeta _{2\nu}$ (resp. $\eta _{2\nu}$ 
and $\eta _{2\nu +1}$) as functions in 
$q\in [\bar{q}_{2\nu -1},0)$ (resp. $q\in [\bar{q}_{2\nu},0)$). Their 
two graphs together (in the $(q,x)$-plane) form a smooth curve $\Gamma _{\nu}^-$ 
(resp. $\Gamma _{\nu}^+$) having 
two parabolic branches $B_{2\nu -1}^-$ and $B_{2\nu}^-$ (resp. 
$B_{2\nu}^+$ and $B_{2\nu +1}^+$) which are 
asymptotically equivalent to $x=-q^{-4\nu +2}$ and 
$x=-q^{-4\nu}$ (resp. $x=-q^{-4\nu +1}$ and 
$x=-q^{-4\nu -1}$) as $q\rightarrow 0^-$. The curve $\Gamma _{\nu}^-$ 
(resp. $\Gamma _{\nu}^+$) has a single point $X_{\nu}^-$ (resp. $X_{\nu}^+$) 
such that for $q=\bar{q}_{2\nu -1}$ (resp. for $q=\bar{q}_{2\nu}$), the 
tangent line to $\Gamma _{\nu}^-$ at $X_{\nu}^-$ 
(resp. to $\Gamma _{\nu}^+$ at $X_{\nu}^+$) is parallel to the $x$-axis. 
The graph of the zero $\eta _1$ is asymptotically equivalent to $-q^{-1}$ as 
$q\rightarrow 0^-$ and one has $\eta _1\rightarrow 1^+$ as 
$q\rightarrow -1^+$.  
\end{tm}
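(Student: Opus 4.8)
The plan is to realize the zero set locally as a graph via the implicit function theorem, to analyze the turning points through the differential equation~(\ref{eqdiff}), and to read off the asymptotics from the defining series. First I would apply the implicit function theorem to $F(q,x):=\theta(q,x)=0$. At a simple zero $\partial\theta/\partial x\neq 0$, so the zero is locally a smooth function $x=x(q)$ with no vertical tangent. By Theorems~\ref{tmknown1} and~\ref{tmknown2}, for every $q$ outside the discrete spectral set all real zeros are simple; hence each of $\xi_{2j-1},\xi_{2j}$ (resp.\ $\zeta_{2\nu-1},\zeta_{2\nu},\eta_{2\nu},\eta_{2\nu+1}$) is a smooth graph over its $q$-interval, giving the two branches of $\Gamma_j$ (resp.\ $\Gamma_\nu^{\pm}$) away from the fold; the zero $\eta_1$ stays simple throughout $(-1,0)$ and so is a single fold-free smooth graph.

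The conceptual core is the behaviour at a double zero $(\tilde q_j,y_j)$ (resp.\ $(\bar q_j,\bar y_j)$), where $\partial\theta/\partial x=0$ and, the zero being double rather than higher, $\partial^2\theta/\partial x^2\neq 0$. Substituting $\theta=\partial\theta/\partial x=0$ into~(\ref{eqdiff}) gives $2\tilde q_j\,\partial\theta/\partial q=y_j^{2}\,\partial^2\theta/\partial x^2$; since $y_j\neq 0$ (the signs of $\bar y_j$ in Theorem~\ref{tmknown2} likewise force $\bar y_j\neq 0$), $\tilde q_j\neq 0$ and $\partial^2\theta/\partial x^2\neq 0$, I obtain $\partial\theta/\partial q\neq 0$. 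The implicit function theorem then produces a smooth local graph $q=q(x)$ with $q'(x)=-(\partial\theta/\partial x)/(\partial\theta/\partial q)=0$ at the double zero, so the two branches join smoothly through a single quadratic fold $(x-y_j)^2\sim c\,(q-\tilde q_j)$ whose tangent is parallel to the $x$-axis. Uniqueness of this fold on each curve is immediate: a vertical tangent forces a multiple zero, and by Theorems~\ref{tmknown1}--\ref{tmknown2} the two zeros tracked by a given curve coalesce at exactly one spectral value. This yields the single points $X_j$, $X_\nu^{\pm}$.

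For the asymptotics as $q\to 0$ I would run a dominant-balance argument on $\sum_j q^{j(j+1)/2}x^j$. Near $x=-q^{-k}$ the two terms $j=k-1$ and $j=k$ share the minimal exponent $-k(k-1)/2$ and dominate the rest; their balance forces $x=-q^{-k}(1+o(1))$, i.e.\ $\xi_k\sim -q^{-k}$. Translating the running index $k$ into the $\xi$-, $\zeta$-, $\eta$-labelling (even $k$ negative, odd $k$ positive when $q<0$) matches each branch to its stated asymptote, while the two-sided enclosure of Remarks~\ref{remsdivers1}(2) pins down the leading term. For $\eta_1\to 1^{+}$ as $q\to -1^{+}$ I would use the formal limit $\theta(-1,x)=(1-x)/(1+x^{2})$ (the coefficients $(-1)^{j(j+1)/2}$ are $4$-periodic, $+,-,-,+$), whose only positive zero sits at $x=1$; a uniform-convergence argument on compacta, together with the relation $\theta(q^{2},x/q)=\sum_j q^{j^{2}}x^{j}$ and the Jacobi triple product, makes the limit rigorous and fixes the side of approach.

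The main obstacle is the uniform control of the dominant-balance asymptotics: bounding the series tails and the error term uniformly in $j$ so that $\xi_k/(-q^{-k})\to 1$ genuinely holds, and threading the global index $k$ correctly through the coalescence pattern so that each branch is assigned to the right curve $\Gamma_j$ or $\Gamma_\nu^{\pm}$. The delicate boundary behaviour underlying $\eta_1\to 1^{+}$ (convergence near $|x|=1$ as $q\to-1$) is a second technical point. By contrast the fold computation from~(\ref{eqdiff}) is short and is the decisive structural step.
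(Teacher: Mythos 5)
Your treatment of the fold points is exactly the paper's argument: at a double zero one has $\theta =\partial \theta /\partial x=0$ and $\partial ^2\theta /\partial x^2\neq 0$, and equation (\ref{eqdiff}) then gives $2q\,\partial \theta /\partial q=x^2\,\partial ^2\theta /\partial x^2\neq 0$ (zeros are nonzero since $\theta (q,0)=1$), so $\partial \theta /\partial q\neq 0$; the implicit function theorem yields smoothness of $\Gamma _j$ at $X_j$ with a tangent parallel to the $x$-axis, and simplicity of the zeros off the spectrum excludes any other such point. This part is correct and is the decisive structural step, just as in the paper. For the branch asymptotics, however, the paper does not run a dominant-balance analysis at all: it simply cites Theorem~4 of \cite{KoBSM1} and part (1) of Theorem~1 of \cite{KoDBAN1}. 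Your two-term balance is indeed the idea behind those results, but the uniform tail estimates you defer ("the main obstacle") are precisely the content of the cited theorems, so as written this portion of your argument is a plan rather than a proof; note also that the enclosure $-q^{-2j-2}<\xi _{2j+1}<-q^{-2j-1}$ of Remarks~\ref{remsdivers1}(2) cannot by itself "pin down the leading term", since the ratio of the two bounds blows up as $q\rightarrow 0^+$.

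The genuine gap is in the claim $\eta _1\rightarrow 1^+$. Your identity $\theta (-1,x)=(1-x)/(1+x^2)$ is correct, but only as a limit inside the unit disk: the series diverges at $q=-1$ for $|x|\geq 1$, and the convergence $\theta (q,\cdot )\rightarrow (1-x)/(1+x^2)$ is locally uniform only on compact subsets of $\{ |x|<1\}$. Since $\theta (q,\cdot )$ has no zeros in $[-1,1]$ (Remarks~\ref{remsdivers2}(5)), one always has $\eta _1>1$; a Hurwitz-type argument on compacta of the open disk can therefore give only the easy half, $\liminf \eta _1\geq 1$, and can never give the needed upper bound $\limsup \eta _1\leq 1$, because that requires sign or zero information at points $x>1$, where your limit function does not represent $\theta$ (and where $\theta (q,x)$ need not converge at all as $q\rightarrow -1^+$). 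The paper closes exactly this half by a sign bracket: $\theta (q,1)>0$, proved via the product $\theta (q,1)=\prod _{j\geq 1}(1-q^{2j})/(1-q^{2j-1})$ from Problem~55 of \cite{PoSz} (all factors positive for $q\in (-1,0)$), together with the cited inequality $\theta (q,-q^{-1})<0$ (Proposition~4.5 of \cite{KoPRSE2}); these give $\eta _1\in (1,-q^{-1})$ with $-q^{-1}\rightarrow 1^+$. Your invocation of the Jacobi triple product points in a workable direction (it is how one controls $\theta =\Theta -G$ outside the disk), but you never formulate the inequality at $x=-q^{-1}$ (or any point $>1$) that the argument actually needs, so the upper-bound half of the claim remains unproven in your proposal.
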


\begin{rems}
{\rm (1) It is clear that the function $\xi _{2j-1}$ cannot be everywhere 
increasing 
on $(0,\tilde{q}_j]$ -- for $q$ close to $\tilde{q}_j$, the slope of the 
tangent line to its graph is positive whereas for $q$ close to $0$,  
it is negative. The graphs of the zeros $\xi _{2j-1}$ and $\xi _{2j}$ which 
coalesce for $q=\tilde{q}_j$ can be compared with the graphs of $\pm \sqrt{q}$ 
at $0$. Similar remarks can be made about the zeros $\zeta _j$ and $\eta _j$.

(2) The curves $x=-q^{-s}$ can be considered as curvilinear asymptotes to the 
zero set of $\theta$.}
\end{rems}

\begin{conj}
The curve $\Gamma _j$ from Theorem~\ref{tmgeom} has a single point $Q_j$ 
at which 
the tangent line is parallel to the $q$-axis, a single 
inflection point $I_j$ and a single point $D_j\in \Gamma _j$ at which 
one has $\theta (q,-q^{-2s+1/2})=0$.  
The order of the points and branches of $\Gamma _j$ 
is the following one: $B_{2j-1}$, $Q_j$, $D_j$, $X_j$, $I_j$, $B_{2j}$. 
The function $\xi _{2j}$ 
is everywhere increasing on $(0,\tilde{q}_j]$.
\end{conj}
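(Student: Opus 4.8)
The plan is to reduce every clause of the conjecture to the behaviour of $\theta$ and its $x$-derivatives along $\Gamma_j$, exploiting the differential equation (\ref{eqdiff}). Writing $\psi:=(x\theta)_{xx}=x\theta_{xx}+2\theta_x$, that equation reads $2q\,\theta_q=x^2\theta_{xx}+2x\theta_x=x\psi$, so $\theta_q=x\psi/(2q)$; since $x<0$ and $q>0$ on $\Gamma_j$ one gets $\mathrm{sign}\,\theta_q=-\mathrm{sign}\,\psi$ there, while the slope of the curve is $dx/dq=-\theta_q/\theta_x=-x\psi/(2q\theta_x)$. With the convention that the tangent at a double zero (where $\theta_x=0$) is parallel to the $x$-axis, a point with tangent parallel to the $q$-axis is exactly a point where $dx/dq=0$, i.e. where $\theta_q=0$, i.e. a zero of $\psi$. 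Since $\theta_x$ vanishes on $\Gamma_j$ only at $X_j$ (part of Theorem~\ref{tmgeom}), the curve splits into the two legs $\xi_{2j-1}$ and $\xi_{2j}$, on each of which $\theta_x$ keeps a constant sign; the whole analysis then reduces to tracking the two scalars $\psi$ and $\theta_x$ along these legs.

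To obtain the monotonicity of $\xi_{2j}$ and the uniqueness of $Q_j$ I would study $\psi$ on $\Gamma_j$. On the leg $\xi_{2j}$ I would show that $\psi$ keeps the sign that makes $dx/dq>0$ throughout: here the functional equation (\ref{eqfunct}), which on $\Gamma_j$ takes the form $\theta(q,qx)=-1/(qx)$, lets one control $\theta_x$ and $\theta_{xx}$ at the scaled argument $qx$ and propagate the sign information from the regime $q\to0^+$, where $\xi_{2j}\sim-q^{-2j}$ and monotonicity is explicit, up to $X_j$. On the leg $\xi_{2j-1}$ I would instead show that $\psi$ changes sign exactly once; combined with $\mathrm{sign}\,\theta_q=-\mathrm{sign}\,\psi$ this yields a single critical point of $\xi_{2j-1}$, a maximum of $x$, which is the vertical tangent $Q_j$.

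For $D_j$ I would analyse the single-variable function $q\mapsto\theta(q,-q^{-2j+1/2})$, i.e. evaluate $\theta$ along the geometric-mean asymptote lying between $B_{2j-1}$ and $B_{2j}$, and prove it has exactly one zero in $(0,\tilde{q}_j)$; the sharp expansions of $\theta$ near the curvilinear asymptotes $x=-q^{-s}$ developed in Section~\ref{secfphik} are the natural tool, since they locate $\Gamma_j$ relative to each $-q^{-s}$. For the inflection $I_j$ I would write the implicit second derivative $x''=-(\theta_{qq}\theta_x^2-2\theta_{qx}\theta_q\theta_x+\theta_{xx}\theta_q^2)/\theta_x^3$, eliminate $\theta_{qx}$ and $\theta_{qq}$ through (\ref{eqdiff}) and its $x$- and $q$-derivatives so that the numerator becomes an expression in the $x$-derivatives of $\theta$ alone, and show it changes sign exactly once on $\xi_{2j}$ between $X_j$ and $B_{2j}$; note that $x''$ is positive near $X_j$ and negative in the parabolic regime $x\sim-q^{-2j}$, so it vanishes an odd number of times and the real content is that it vanishes only once. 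Assembling these locations yields the order $B_{2j-1},Q_j,D_j,X_j,I_j,B_{2j}$.

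I expect the main obstacle to be the three uniqueness statements, and above all the single inflection point. Asymptotics near $q=0$ and near $q=\tilde{q}_j$ readily give the existence of each feature and the endpoint behaviour, but ruling out further sign changes of $\psi$, of $\theta(q,-q^{-2j+1/2})$, and of the inflection numerator in the intermediate range needs global convexity estimates for $\theta$ that do not follow from the functional and differential equations alone — which is presumably why the statement is only conjectured. A plausible route is to derive from (\ref{eqfunct}) uniform two-sided bounds on the tails $\sum_{k\ge k_0}q^{k(k+1)/2}x^k$ along $\Gamma_j$ and feed them into the three sign analyses.
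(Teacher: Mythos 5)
You are attempting to prove a statement that the paper itself labels a \emph{Conjecture} and for which it offers no proof, so there is no proof of record to compare yours against; the only fair comparison is with the adjacent results the paper does establish. Judged on its own terms, your proposal is a reduction scheme rather than a proof, and you concede as much in your final paragraph: the three uniqueness claims (single $Q_j$, single $D_j$, single $I_j$) and the monotonicity of $\xi_{2j}$ are precisely the content of the conjecture, and you defer all of them to ``global convexity estimates'' that you never supply. Your reductions are sound: by (\ref{eqdiff}) one has $\theta _q=x\psi /(2q)$ with $\psi :=x\theta _{xx}+2\theta _x$, so the points of $\Gamma _j$ with tangent parallel to the $q$-axis are exactly the zeros of $\psi$ along the curve (note, though, that such a point is a \emph{horizontal} tangent when $q$ is drawn horizontally, not the ``vertical tangent'' you call it); your endpoint computations correctly show that $\psi$ changes sign an odd number of times on the leg $\xi _{2j-1}$ (positive slope near $q=0$, negative near $\tilde{q}_j$, so $Q_j$ is a maximum of $x$) and that the curvature changes sign an odd number of times on the leg $\xi _{2j}$. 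But parity arguments of this kind give existence, not uniqueness, and uniqueness is what is conjectured: nothing in your outline excludes three or more sign changes of $\psi$, of $q\mapsto \theta (q,-q^{-2j+1/2})$, or of the inflection numerator, and this is exactly the obstruction that presumably kept the statement at the level of a conjecture.

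One clause is, however, closer to settled than your outline suggests, and you should have invoked the paper's own results there instead of proposing to redo them. Since $2j-1/2\in (2j-1,2j)\subset K^{\dagger}$, Proposition~\ref{propKdagger} already gives existence and \emph{uniqueness} of the intersection point $D_j$ for all $j$ large enough (this is the role of the constant $\kappa ^{\triangle}$ in Remarks~\ref{remsKdagger}), and Proposition~\ref{prop1/2} shows that $\partial \theta /\partial x>0$ at any such point, which forces $D_j$ to lie on the leg $\xi _{2j-1}$, i.e. between $B_{2j-1}$ and $X_j$, as the conjectured ordering requires. So for $j$ sufficiently large the $D_j$ clause and that fragment of the ordering follow from results already in the paper; what remains genuinely open --- and remains open after your proposal --- is the uniqueness of $Q_j$ and $I_j$, the position of $Q_j$ relative to $D_j$, the case of small $j$ for $D_j$, and the monotonicity of $\xi _{2j}$ on all of $(0,\tilde{q}_j]$.
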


On Fig.~\ref{zerosetPTF} we show parts of the curves $\Gamma _1$, $\Gamma _2$ 
and $\Gamma _3$ (drawn in solid line) and of the graphs of the functions 
$x=-q^a$ for $a=0.5$, $-1.5$ (drawn in solid), $-2.5$, $-3,5$ 
(drawn in dashed), $-1$ and $-2$ (drawn in dotted line). On 
Fig.~\ref{zerosetPTF} we show also the horizontal dash-dotted 
lines $q=0.26$ and $q=0.4$. 
We say that the part of the curve $x=-q^{-1.5}$ corresponding to $q\in (0,0.26]$ 
is {\em inside} and the part corresponding to $q\in [0.4,1)$ 
is {\em outside} the curve $\Gamma _1$. 

 \begin{figure}[htbp]
\centerline{\hbox{\includegraphics[scale=0.7]{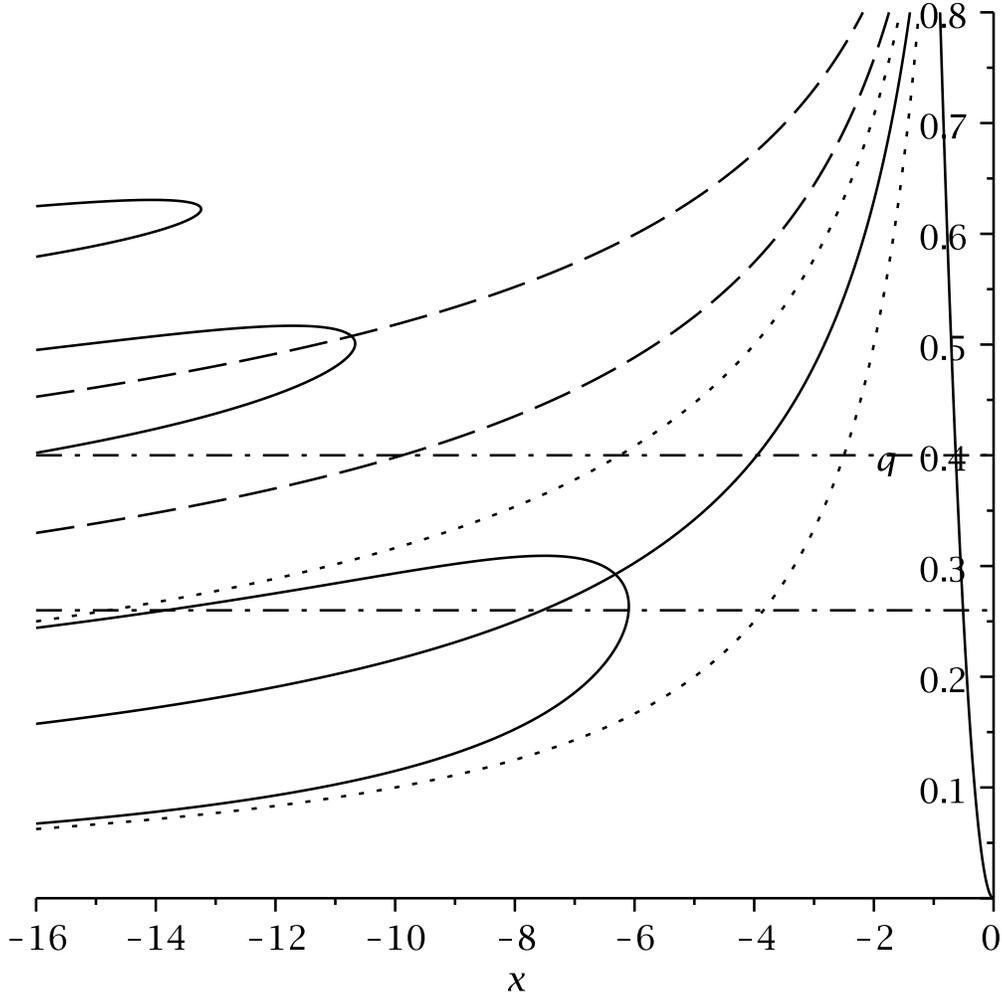}}}
    \caption{The curves $\Gamma _1$, $\Gamma _2$ 
and $\Gamma _3$ and the graphs $x=-q^a$ for $a=0.5$, $-1.5$, $-2.5$, $-3,5$, 
$-1$ and $-2$.}
\label{zerosetPTF}
\end{figure}
 
On Fig.~\ref{zerosetqneg} we show for $q\in (-1,0)$ the real-zero set 
of $\theta$ 
(in solid line) and the curves $x=-q^{-a}$, $a=1$, $\ldots$, $8$ (in dashed line 
for $a=1$, $2$, $5$ and $6$, and in dotted line for $a=3$, $4$, $7$ and $8$).  

 \begin{figure}[htbp]
\centerline{\hbox{\includegraphics[scale=0.7]{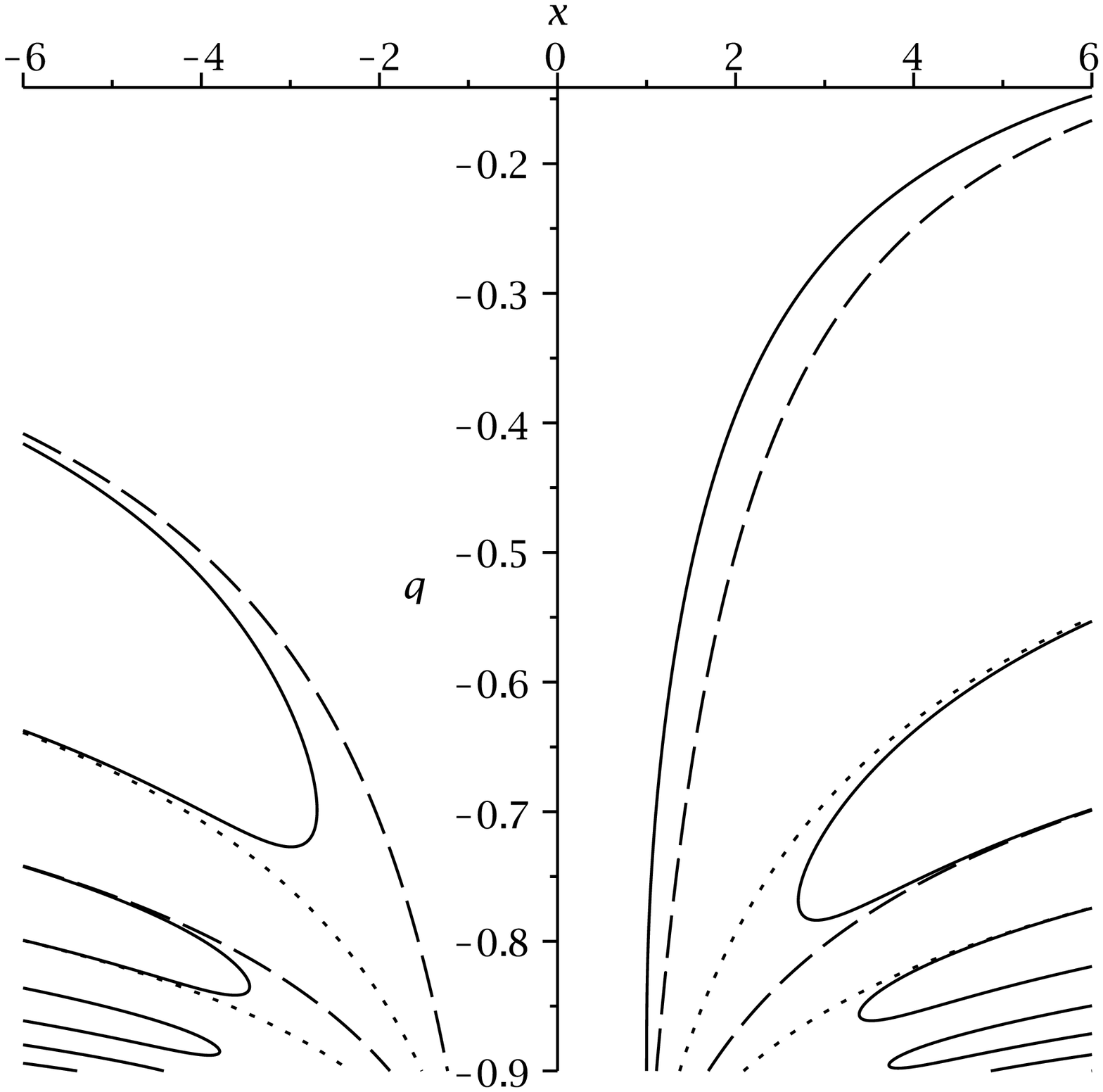}}}
    \caption{The zero set of $\theta$ for $q\in (-1,0)$ and the 
curves $x=-q^{-a}$ for $a=1$, $\ldots$, $8$.}
\label{zerosetqneg}
\end{figure}

\begin{rems}\label{remsdivers2}
{\rm (1) Suppose that $q\in (0,1)$. 
Inside (resp. outside) each curve $\Gamma _j$ one has 
$\theta (q,x)<0$ (resp. $\theta (q,x)>0$).

(2) One can check numerically that $\Gamma _1\subset \{ x\leq -6.095\}$. One 
can conjecture that any real zero of $\theta (q,.)$, 
for any $q\in (0,1)$, is smaller than $-6.095$. 

(3) Suppose that $q\in (-1,0)$. Inside each curve 
$\Gamma _{\nu}^-$ (resp. $\Gamma _{\nu}^+$) one has 
$\theta (q,x)<0$ (resp. $\theta (q,x)>0$). 
For $x<\eta _1$ (resp. $x>\eta _1$) and $(q,x)$ outside the curves 
$\Gamma _{\nu}^-$ (resp. $\Gamma _{\nu}^+$) one has 
$\theta (q,x)>0$ (resp. $\theta (q,x)<0$).

(4) One can check numerically that $\Gamma _1^-\subset \{ x\leq -2.699\}$. 
One can conjecture that any negative real zero of $\theta (q,.)$, for 
any $q\in (-1,0)$, is smaller than $-2.699$.

(5) For any $q\in (-1,0)$, the function $\theta (q,.)$ has no real zero 
in the interval $[-1,1]$. Indeed, one has $\theta (q,0)=1\neq 0$.  
For $x\in (0,\eta _1)$, one has $\theta (q,x)>0$, see part (3) 
of these remarks. For $x\in [-1,0)$, one 
obtains $\theta (q,x)=1+qx\theta (q,qx)$, where $qx\in (0,1)$ hence 
$qx\theta (q,qx)>0$ and $\theta (q,x)>0$.} 
\end{rems}


\begin{proof}[Proof of Theorem~\ref{tmgeom}:]
Part (1). The claims about the branches $B_{2j-1}$ and $B_{2j}$ follow from 
Theorem~4 in~\cite{KoBSM1}; for the branches $B_{\nu}^{\pm}$ this follows from  
part (1) of Theorem~1 in~\cite{KoDBAN1}. 
Smoothness of $\Gamma _j$ has to be proved only at $X_j$, everywhere else 
$\Gamma _j$ is the graph of a simple zero of $\theta$ which depends smoothly 
on $q$. For $q=\tilde{q}_j$, the function $\theta$ has a double zero 
at $\xi _{2j-1}=\xi _{2j}$, so 
$(\partial \theta /\partial x)(\tilde{q}_j,\xi _{2j-1})=0$ and 
$(\partial ^2\theta /\partial x^2)(\tilde{q}_j,\xi _{2j-1})\neq 0$. This implies 
$(\partial \theta /\partial q)(\tilde{q}_j,\xi _{2j-1})\neq 0$, 
see (\ref{eqdiff}), from which smoothness of $\Gamma _j$ at $X_j$ follows.
Simplicity of the zeros $\xi _{2j-1}$ and $\xi _{2j}$ for $q\in (0,\tilde{q}_j)$ 
excludes tangents parallel to the $x$-axis on $\Gamma _j\backslash \{ X_j\}$. 
 
Part (2). The claims about the curves $\Gamma _j^{\pm}$ 
are proved by analogy with 
the claims about the curves $\Gamma _j$. 
%
By Proposition~4.5 of \cite{KoPRSE2}, one has $\theta (q,-q^{-1})<0$. 
(On Fig.~\ref{zerosetqneg} this corresponds to the fact that the graph of 
$\eta _1$ is to the left of the dashed curve $x=-q^{-1}$.) 
We show that $\theta (q,1)>0$ from which follows that 
$\eta _1\rightarrow 1^+$ as $q\rightarrow -1^+$. 
For $|q|<1$, one has 

$$\theta (q,1)=\sum _{j=0}^{\infty}q^{j(j+1)/2}=
\frac{1-q^2}{1-q}\cdot \frac{1-q^4}{1-q^3}\cdot \frac{1-q^6}{1-q^5}\cdots ~,$$
see Problem 55 in Part I, Chapter 1 of \cite{PoSz}. For $q\in (-1,0)$, 
all factors 
in the right-hand side are positive, hence $\theta (q,1)>0$. 
\end{proof}

\section{The functions $\varphi _k$\protect\label{secfphik}}

In the present section we consider some functions in one variable which 
play an important role in the proofs in this paper:

\begin{equation}\label{defifphik}
\varphi _k(q):=\theta (q,-q^{k-1})=\sum _{j=0}^{\infty}(-1)^jq^{A_j}~,~~~ 
A_j:=kj+j(j-1)/2~,~~~k\in \mathbb{R}~.
\end{equation} 
In the notation for $A_j$ we skip the parameter $k$ 
in order not to have too many indices. We prove the following theorem:

\begin{tm}\label{tmfuncphik}
(1) For $k>1$, the function $\varphi _k$ is of the class 
$C^1_{(0,1)}\cap C_{[0,1]}$; its right 
derivative at $0$ exists and equals $0$. For $k>0$, its left derivative at $1$ 
exists and equals $-(2k-1)/8$.

(2) For $k>0$ sufficiently large, the function $\varphi _k$ is decreasing on 
$[0,1]$.
\end{tm}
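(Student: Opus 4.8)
The plan is to treat the interior regularity and the two endpoints separately, and then to reduce the monotonicity statement to a sign estimate for the derivative. Throughout I write $A_{j}=kj+j(j-1)/2=\tfrac12 j^{2}+(k-\tfrac12)j$, so that $A_{0}=0$, the increments $A_{j+1}-A_{j}=k+j$ are positive, and the exponents are strictly increasing for $k>0$. On any compact subset of $[0,1)$ the series $\sum(-1)^{j}q^{A_{j}}$ and its termwise derivatives converge uniformly (the terms decay like $q^{A_{j}}$ with $A_{j}\to\infty$), so $\varphi_{k}\in C^{\infty}(0,1)$, and continuity at $0$ with $\varphi_{k}(0)=1$ is immediate. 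For the right derivative at $0$ I would use that $\varphi_{k}'(q)=\sum_{j\ge1}(-1)^{j}A_{j}q^{A_{j}-1}$ is dominated near $0$ by its first term $-kq^{k-1}$; since $k>1$ this tends to $0$, the remaining terms carry strictly larger powers of $q$, and $\lim_{q\to0^{+}}\varphi_{k}'(q)=0$. As $\varphi_{k}$ is $C^{1}$ on $(0,1)$ and $\varphi_{k}'$ extends continuously by $0$ at the origin, the right derivative exists and equals $0$.

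The delicate endpoint is $q=1$, where the series does not converge termwise. Here I would substitute $q=e^{-t}$ and study $\varphi_{k}(e^{-t})=\sum_{j\ge0}(-1)^{j}e^{-tA_{j}}$ as $t\to0^{+}$. Because this is an alternating sum of the smooth, rapidly decaying function $g(x)=e^{-tA(x)}$ with $A(x)=\tfrac12 x^{2}+(k-\tfrac12)x$, the natural tool is the Euler--Maclaurin (Boole) summation formula, whose endpoint expansion reads $\sum_{j\ge0}(-1)^{j}g(j)=\tfrac12\sum_{m\ge0}\frac{E_{m}(0)}{m!}g^{(m)}(0)$ up to a remainder that is exponentially small (the ``bulk'' contribution being exponentially small as $t\to0^{+}$, exactly as in the symmetric case $k=\tfrac12$, which is governed by the modular behaviour of a Jacobi theta function). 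Using $E_{0}(0)=1$, $E_{1}(0)=-\tfrac12$, $E_{2}(0)=0$, together with $g(0)=1$ and $g'(0)=-t(k-\tfrac12)$, this yields
\[
\varphi_{k}(e^{-t})=\tfrac12+\tfrac{2k-1}{8}\,t+O(t^{2}),\qquad t\to0^{+}.
\]
Since $t=-\ln q=(1-q)+O((1-q)^{2})$, this gives at once continuity at $1$ with $\varphi_{k}(1)=\tfrac12$ and the left derivative $-(2k-1)/8$, valid for all $k>0$. A useful consistency check is that $\tfrac12$ and $-(2k-1)/8$ are precisely the Abel sums of the divergent series $\sum(-1)^{j}$ and $\sum(-1)^{j}A_{j}$ obtained by differentiating formally. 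The main obstacle in this step is making the Euler--Maclaurin remainder rigorous --- i.e.\ proving that the oscillatory/bulk part is $o(t)$ (in fact exponentially small) --- which is where the quadratic growth of $A_{j}$, rather than the linear growth in the classical Abelian theorem, has to be handled.

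For part (2) I would reduce monotonicity to the single inequality $\Psi(t)<0$ for all $t>0$, where $\Psi(t):=\sum_{j\ge1}(-1)^{j}A_{j}e^{-tA_{j}}=-\tfrac{d}{dt}\varphi_{k}(e^{-t})$ and $\varphi_{k}'(e^{-t})=e^{t}\Psi(t)$. Note that the naive bound by the first term fails for small $t$ (there $A_{2}e^{-tA_{2}}>A_{1}e^{-tA_{1}}$), so the alternating cancellation among all terms is essential, and I would argue by scales in $t$. For $t\ge t_{0}$ fixed, the first term $-A_{1}e^{-tA_{1}}=-ke^{-tk}$ dominates: the sequence $A_{j}e^{-tA_{j}}$ is then decreasing in $j$, the alternating tail is bounded by $A_{2}e^{-tA_{2}}$, and $\tfrac{A_{2}}{A_{1}}e^{-t(k+1)}\to0$, so $\Psi<0$. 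For small $t$ I would use the rescaling $t=s/k$, under which $tA_{j}=sj+\tfrac{s}{2k}j(j-1)$ and hence, to leading order in $k$, $\Psi(s/k)\approx k\sum_{j\ge1}(-1)^{j}j\,e^{-sj}=-k\,\dfrac{e^{-s}}{(1+e^{-s})^{2}}<0$ for every $s>0$; as $s\to0$ this matches the value $-k/4\sim-(2k-1)/8$ from the previous step. The principal difficulty --- and the technical heart of part (2) --- is to control the error in this approximation uniformly in $s$, i.e.\ to show that the corrections coming from the quadratic part $\tfrac12 j(j-1)$ of $A_{j}$ and from the factor $e^{-s\,j(j-1)/2k}$ stay smaller in absolute value than the negative main term $-k\,e^{-s}/(1+e^{-s})^{2}$ throughout the transition region $q\approx1-s/k$; for the remaining very small $t$ one falls back on the expansion of the preceding paragraph.
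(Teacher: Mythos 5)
Your overall strategy --- Boole/Euler--Maclaurin asymptotics at $q=1$ for part (1), and a three-scale analysis of $\Psi(t)$ for part (2) --- is genuinely different from the paper's, which is entirely elementary: the paper works with the smoothed partial sums $\Phi_m$ (partial sums corrected by $D_m=q^{A_m}/(1+q^{k+m-1/2})$), shows the \emph{exact} identity $\Phi_m'(1)=-(2k-1)/8$ for every $m$ (Lemma~\ref{lmvalue}), and then proves uniform convergence on all of $[0,1]$ of the pairwise-grouped series $\sum_m\Delta_m$ and $\sum_m\Delta_m'$ (Propositions~\ref{propconv1} and~\ref{propconv2}), so that the endpoint value of the derivative passes to the limit. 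Your route could be made to work, but as written its two critical steps are exactly the ones you defer, and one claim is false as stated. In part (1), the Boole-summation remainder after a \emph{fixed-order} truncation is not exponentially small: keeping only $g(0)$ and $g'(0)$, the remainder is an integral against $g''$, and the bound your argument supplies, $\int_0^\infty|g''(x)|\,dx$, is of size $O(\sqrt{t})$ (substitute $x\mapsto x/\sqrt{t}$) --- far short of the $o(t)$ needed even for the existence of the left derivative. What is exponentially small is only the symmetric (``bulk'') sum $\sum_{j\in\mathbb{Z}}(-1)^je^{-tA(j)}$, by Poisson summation after completing the square; the one-sided sum has genuine power-law endpoint corrections. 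To reach your claimed $O(t^2)$ one must push the Boole expansion to fifth order, use $E_2(0)=E_4(0)=0$ together with $g'''(0)=O(t^2)$, and bound the fifth-order remainder integral; none of this is in the proposal.

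The gap in part (2) is more serious, because the statement concerns $k$ large, so every error estimate must be tracked in its dependence on $k$, which your sketch never does. Regime (i) ($t\ge t_0$, first-term domination) is fine for $k$ large. In regime (ii), a term-by-term bound on the corrections to $\Psi(s/k)\approx -k\,e^{-s}/(1+e^{-s})^2$ produces an error that blows up like a negative power of $s$ as $s\to 0^+$ (of order $s^{-3}$), so negativity is obtained only for $s$ above a $k$-dependent threshold (of order $k^{-1/3}$); in regime (iii), the $q\to1$ expansion of $\Psi$ has coefficients growing polynomially in $k$, so it yields negativity only on a window $t\lesssim 1/k$, i.e. $s\lesssim 1$, shrinking with $k$. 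The proof of part (2) \emph{is} precisely the verification that these windows overlap for all large $k$; the observation that the leading constants $-k/4$ and $-(2k-1)/8$ match is a consistency check, not an argument. It is worth seeing how the paper sidesteps this entirely: Lemma~\ref{lminequalities} gives $|\Delta_{2\nu}'|\le c\,q^{k-1}/\nu^2$ with $c$ \emph{independent of} $k$, whence $\varphi_k'\le -q^{k-1}\bigl(k-4c\sum_{\nu\ge1}\nu^{-2}\bigr)<0$ on $[1/2,1]$ for $k$ large, with Lemma~\ref{lmphik1/2} covering $[0,1/2]$; uniformity in $k$ is built into the estimates rather than recovered afterwards by matching asymptotic regimes.
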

We prove part (1) of the theorem after the formulations 
of Propositions~\ref{propconv1} and \ref{propconv2}, 
and part (2) at the end of the section. 

\begin{rems}\label{remsphik}
{\rm (1) The functions $\varphi _k$ satisfy the functional equation

\begin{equation}\label{eqphik}
\varphi _k=1-q\varphi _{k+1}~.
\end{equation}
One can deduce from this equation that the formula for the left derivative 
at $1$ remains valid for all $k\in \mathbb{R}$. For $k\in \mathbb{Z}$, 
the function $\varphi _k$ belongs to the class $C^1_{(0,1)}\cap C_{[0,1]}$ 
(the negative powers of $q$ cancel). For 
$k<0$, $k\not\in \mathbb{Z}$, one has $\varphi _k(q)\rightarrow \infty$ or 
$\varphi _k(q)\rightarrow -\infty$ as $q\rightarrow 0^+$ depending on the 
parity of $[k]$ (the integer part of $k$). 
 
(2) We remind that: 
\vspace{1mm}

{\em i)} For $k>0$ 
(resp. for $k>1$) and $q\in (0,1)$, one has $\varphi _k<1/(1+q^k)$ 
(resp. $1/(1+q^{k-1})<\varphi _k$), and that 
$\lim _{q\rightarrow 1^-}\varphi _k(q)=1/2$, see~\cite{KoBSM1}. 
\vspace{1mm}

{\em ii)} When $\varphi _k(q)$ is considered as a function of $(q,k)$, then 
for $k>0$ and $q\in (0,1)$, one has 
$\partial \varphi _k/\partial k>0$, see~\cite{KoPRSE2}.}
\end{rems}

Consider the functions 

$$\Phi _m:=(\sum _{j=0}^{m-1}(-1)^jq^{A_j})+(-1)^mD_m~,~~~\, m\in \mathbb{N}~, 
~~~\, {\rm where}~~~\, D_m:=q^{A_m}/(1+q^{k+m-1/2})~.$$ 
Set $S_m:=(1+q^{k+m-1/2})(1+q^{k+m+1/2})$. 
Hence 

\begin{equation}\label{telescope}
\begin{array}{lcl}
\Phi _{m+1}&=&\Phi _m+\Psi _m~,\\ \\{\rm where}&&\\ \\    
\Psi _m&:=&(-1)^mq^{A_m}+(-1)^{m+1}D_{m+1}-(-1)^mD_m\\ \\ 
&=&(-1)^mq^{A_m}(q^{k+m-1/2}(1+q^{k+m+1/2})-q^{k+m}(1+q^{k+m-1/2}))/S_m\\ \\ 
&=&(-1)^mq^{A_m+k+m-1/2}(1-q^{1/2})(1-q^{k+m})/S_m~.
\end{array}
\end{equation} 

\begin{lm}\label{lmvalue}
For $m\in \mathbb{N}$, one has $\Phi _m'(1)=-(2k-1)/8$.
\end{lm}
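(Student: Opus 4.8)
The plan is to exploit the telescoping identity (\ref{telescope}) and reduce the whole statement to a single base-case derivative, observing that each increment $\Psi_m=\Phi_{m+1}-\Phi_m$ contributes nothing to the derivative at $q=1$. Note first that, since the exponents appearing in $\Phi_m$ and $\Psi_m$ are real but $q>0$, all the functions in play are smooth on $(0,1]$, so the (left) derivative at $q=1$ is well defined. I would then read off the factored form of $\Psi_m$ in (\ref{telescope}): the denominator $S_m$ equals $(1+1)(1+1)=4\neq 0$ at $q=1$, the power $q^{A_m+k+m-1/2}$ is smooth and nonzero there, while the two remaining factors $(1-q^{1/2})$ and $(1-q^{k+m})$ each vanish to first order at $q=1$. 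Hence $\Psi_m$ has a zero of order two at $q=1$; in particular $\Psi_m(1)=0$, and an application of the product rule (every term in the derivative of a product of two functions both vanishing at $q=1$ still carries one vanishing factor) gives $\Psi_m'(1)=0$ for every $m\in\mathbb{N}$.

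Consequently, writing $\Phi_m=\Phi_1+\sum_{i=1}^{m-1}\Psi_i$ by telescoping and differentiating at $q=1$, all the terms $\Psi_i'(1)$ drop out, so that $\Phi_m'(1)=\Phi_1'(1)$ is independent of $m$. It then remains only to evaluate the base case $m=1$. Here $A_0=0$ and $A_1=k$, so the single sum term is $q^{A_0}=1$ and $D_1=q^{k}/(1+q^{k+1/2})$, whence $\Phi_1=1-q^{k}/(1+q^{k+1/2})$. Differentiating this single quotient by the quotient rule and evaluating at $q=1$, the numerator collapses to $k(1+1)-(k+\tfrac12)=k-\tfrac12$ and the denominator to $(1+1)^2=4$, giving $\Phi_1'(1)=-(k-\tfrac12)/4=-(2k-1)/8$. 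Combined with the previous paragraph, this yields $\Phi_m'(1)=-(2k-1)/8$ for all $m\in\mathbb{N}$, as claimed.

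I expect essentially no conceptual obstacle in this argument; the only point requiring a little care is the smoothness remark, namely that although $\Psi_m$ and $\Phi_m$ involve half-integer and real powers of $q$, they are genuinely differentiable at $q=1$ from the left, so that the product-rule justification of $\Psi_m'(1)=0$ and the final quotient-rule computation are both legitimate. Everything else is routine elementary calculus once the double zero of $\Psi_m$ at $q=1$ has been identified from the factorisation in (\ref{telescope}).
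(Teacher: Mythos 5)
Your proposal is correct, and its skeleton --- base case $m=1$, the vanishing $\Psi_m'(1)=0$, then telescoping --- is the same as the paper's induction; the genuine difference lies in how the central vanishing is established. The paper works with the unfactored expression $\Psi_m=(-1)^mq^{A_m}+(-1)^{m+1}D_{m+1}-(-1)^mD_m$, computes $D_m'(1)=km/2+m^2/4-m/2-k/4+1/8$ and $(q^{A_m})'(1)=A_m$, and verifies by explicit algebra that the three contributions cancel. You instead read off from the factored form in (\ref{telescope}) that $\Psi_m$ carries the two factors $(1-q^{1/2})$ and $(1-q^{k+m})$, each vanishing to first order at $q=1$, while $(-1)^mq^{A_m+k+m-1/2}/S_m$ is smooth with $S_m(1)=4\neq 0$; a zero of order two forces $\Psi_m(1)=\Psi_m'(1)=0$ with no computation at all. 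Your route is shorter and less error-prone, and it reuses a factorization the paper has already derived (and needs anyway for Propositions~\ref{propconv1} and~\ref{propconv2}); the paper's term-by-term cancellation buys only independence from that factorization. You also make the base case explicit: $\Phi_1=1-q^k/(1+q^{k+1/2})$, and the quotient rule at $q=1$ gives numerator $2k-(k+\tfrac12)=k-\tfrac12$ over denominator $4$, hence $\Phi_1'(1)=-(2k-1)/8$, which matches the value the paper merely asserts ``can be checked directly.'' Your smoothness remark (real powers of $q$ are differentiable near $q=1$ since $q>0$) is exactly the point needed to legitimize both steps, so there is no gap.
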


\begin{proof}
Indeed, for $m=1$ this can be checked directly. For arbitrary $m\in \mathbb{N}$ 
this follows from $D_m'(1)=A_m/2-(k+m-1/2)/4=km/2+m^2/4-m/2-k/4+1/8$ hence 

$$\begin{array}{ccl}
\Psi _m'(1)&=&(-1)^{m+1}\left( k(m+1)/2+(m+1)^2/4-
(m+1)/2-k/4+1/8\right) \\ \\&&+(-1)^mA_m-(-1)^m(km/2+m^2/4-m/2-k/4+1/8)\\ \\ 
&=&(-1)^m\{ -\left( k(m+1)/2+(m+1)^2/4-(m+1)/2-k/4+1/8\right) \\ \\&&-
(km/2+m^2/4-m/2-k/4+1/8)+km+m(m-1)/2\} \\ \\ 
&=&0~.\end{array}$$
By induction on $m$ one concludes that 
$\Phi _m'(1)=-(2k-1)/8$ for $m\in \mathbb{N}$.
\end{proof}

$$\begin{array}{lccl}
{\rm Set}&T_m&:=&(1+q^{k+m-1/2})(1+q^{k+m+1/2})(1+q^{k+m+3/2})\\ \\ {\rm and}&  
U_m&:=&(-1)^mq^{A_m+k+m-1/2}/T_m~.\end{array}$$ 
We consider the sum $\Delta _m:=\Psi _m+\Psi _{m+1}$, because due to 
the opposite signs of its two terms, one obtains better estimations for the 
convergence of certain functional series:

\begin{equation}\label{DeltaPsi}
\begin{array}{ccl}
\Delta _m&=&(-1)^mq^{A_m+k+m-1/2}(1-q^{1/2})
((1+q^{k+m+3/2})(1-q^{k+m})\\ \\ &&-q^{k+m+1}(1+q^{k+m-1/2})(1-q^{k+m+1}))/T_m\\ \\ 
&=&U_m(1-q^{1/2})(1-q^{k+m})((1+q^{k+m+3/2})-q^{k+m+1}(1+q^{k+m-1/2}))\\ \\ 
&&-U_m(1-q^{1/2})q^{2k+2m+1}(1+q^{k+m-1/2})(1-q)\\ \\ 
&=&U_m(1-q^{1/2})(K_m+L_m+M_m)~,\end{array}\end{equation}
where 

$$\begin{array}{cl}
K_m:=(1-q^{k+m})(1-q^{k+m+1})~,~~~&
L_m:=q^{k+m+3/2}(1-q^{k+m})(1-q^{k+m-1})\\ \\  {\rm and}& 
M_m:=-q^{2k+2m+1}(1+q^{k+m-1/2})(1-q)~.\end{array}$$ 

\begin{prop}\label{propconv1}
The series $\Delta _1+\Delta _3+\Delta _5+\cdots$ and 
$\Delta _2+\Delta _4+\Delta _6+\cdots$ are uniformly convergent for 
$q\in [0,1]$.
\end{prop}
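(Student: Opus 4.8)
The plan is to establish uniform convergence of the two series by bounding the general term $\Delta_m$ uniformly on $[0,1]$ and showing the bound is summable. The crucial structural observation, already supplied by~(\ref{DeltaPsi}), is that $\Delta_m = U_m(1-q^{1/2})(K_m+L_m+M_m)$, where $U_m = (-1)^m q^{A_m+k+m-1/2}/T_m$ carries the dominant decaying factor $q^{A_m}$ with $A_m = km + m(m-1)/2$. First I would estimate each of the three pieces $K_m$, $L_m$, $M_m$ separately on $q\in[0,1]$. Since every factor appearing in them has the form $(1\pm q^{\,\alpha})$ with $\alpha>0$, each such factor is bounded in absolute value by a fixed constant (indeed $0\le 1-q^\alpha\le 1$ and $1\le 1+q^\alpha\le 2$ for $q\in[0,1]$), so $|K_m|$, $|L_m|$, $|M_m|$ are each bounded by an absolute constant independent of $m$ and $q$; hence $|K_m+L_m+M_m|\le C_0$ for some fixed $C_0$.

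Next I would bound $U_m$. The denominator $T_m=(1+q^{k+m-1/2})(1+q^{k+m+1/2})(1+q^{k+m+3/2})\ge 1$ for $q\in[0,1]$, so $|U_m|\le q^{A_m+k+m-1/2}\le q^{A_m}$ whenever $k+m-1/2\ge 0$ (which holds for all $m\ge 1$ once $k>0$). Combining with $|1-q^{1/2}|\le 1$, I obtain the uniform bound
\begin{equation}\label{deltabound}
|\Delta_m|\le C_0\, q^{A_m}\qquad\text{for all }q\in[0,1].
\end{equation}
The remaining point is to produce a summable numerical majorant. Because $A_m$ grows quadratically in $m$, the factor $q^{A_m}$ is largest near $q=1$, where it tends to $1$, so the naive bound $q^{A_m}\le 1$ is \emph{not} summable and a more careful argument is needed. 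The anticipated main obstacle is therefore controlling the behaviour near $q=1$.

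To overcome this I would exploit the factor $(1-q^{1/2})$ and the near-cancellation that motivated grouping $\Psi_m$ with $\Psi_{m+1}$ in the first place. On the subinterval where $q$ is bounded away from $1$, say $q\in[0,1-\delta]$, the bound~(\ref{deltabound}) gives geometric-type decay in $m$ since $q^{A_m}\le q^{\,m}\le(1-\delta)^m$, which is summable. For $q$ near $1$ I would extract the vanishing of $K_m+L_m+M_m$: writing $q=1-\varepsilon$ and expanding, the leading constants in $K_m$, $L_m$, $M_m$ cancel (this is exactly the mechanism behind Lemma~\ref{lmvalue}, where $\Psi_m'(1)=0$), so that $K_m+L_m+M_m = O(1-q)$ uniformly, and together with the explicit factor $(1-q^{1/2})=O(1-q)$ this yields a factor $(1-q)^2$ multiplying a quantity in which $q^{A_m}$ can then be summed against $\sum_m q^{A_m}\le \sum_m q^{\,m}= q/(1-q)$. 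The two powers of $(1-q)$ dominate the single pole $1/(1-q)$, giving a bound that is uniformly controlled as $q\to 1^-$. Assembling the two regimes produces a single summable majorant $\sum_m c_m<\infty$ with $|\Delta_m|\le c_m$ on all of $[0,1]$, and the Weierstrass $M$-test then delivers uniform convergence of both the odd-indexed series $\Delta_1+\Delta_3+\cdots$ and the even-indexed series $\Delta_2+\Delta_4+\cdots$ on $[0,1]$, as claimed.
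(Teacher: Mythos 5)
Your overall strategy --- a term-by-term majorant plus the Weierstrass $M$-test, starting from the representation (\ref{DeltaPsi}) --- is the same as the paper's, and your preliminary bounds ($|K_m|,|L_m|,|M_m|$ bounded by absolute constants, $T_m\ge 1$, hence $|U_m|\le q^{A_m}$) are correct. The gap is in the treatment of $q$ near $1$. Your key claim, that $K_m+L_m+M_m=O(1-q)$ \emph{with a constant independent of} $m$, is false. Note first that no cancellation among the three terms occurs or is needed: each of $K_m$, $L_m$, $M_m$ contains a factor $1-q^{\alpha}$ with $\alpha>0$, hence each vanishes separately at $q=1$; the whole issue is the uniformity in $m$ of the implied constant, and that uniformity fails. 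Take $q=1-1/m$: then $q^{k+m}\to e^{-1}$, so $K_m\to(1-e^{-1})^2>0$ while $1-q=1/m\to 0$; thus any bound $|K_m|\le c_m(1-q)$ forces $c_m$ to grow at least like $m$. Inserting this unavoidable factor $m$ into your bookkeeping, and keeping your choice of summing against the linear bound $A_m\ge m$ (i.e. $\sum_{m}q^{A_m}\le q/(1-q)$), the tail of the series is only bounded by $\sup_{q\in[0,1)}q^N\left(N(1-q)+q\right)$, which does not tend to $0$ as $N\to\infty$ (it is arbitrarily close to $1$ for $q$ near $1$). So the two-regime argument, as written, does not yield uniform convergence.

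The correct mechanism --- and the one the paper uses --- is not cancellation at $q=1$ but the interplay between the quadratically growing exponent and the factor $1-q^{1/2}$ that you discarded when deriving the bound $|\Delta_m|\le C_0q^{A_m}$ and only partially reinstated. By part (2) of Lemma~\ref{lmfF}, the single function $F_m:=q^{C_m}(1-q^{1/2})$ with $C_m:=A_m+k+m-1/2$ (quadratic in $m$, with $a=1/2$) satisfies $\max_{[0,1]}F_m\le(1/2)/(C_m+1/2)=O(1/m^2)$: its maximum is attained at $\beta_m=(C_m/(C_m+1/2))^{2}$, which lies within $O(1/m^2)$ of $1$, where $1-q^{1/2}$ is itself $O(1/m^2)$, while to the left of $\beta_m$ the factor $q^{C_m}$ is already very small. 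Writing $U_m(1-q^{1/2})K_m=F_m\cdot(-1)^mK_m/T_m$ (and similarly for $L_m$ and $M_m$) and using $|K_m/T_m|\le 1$ etc., one gets $|\Delta_m|=O(1/m^2)$ uniformly on all of $[0,1]$ in one stroke, with no splitting into regimes, and the $M$-test concludes. Your draft contains all the raw ingredients --- the quadratic growth of $A_m$ and the factor $1-q^{1/2}$ --- but the decay near $q=1$ must come from maximizing $q^{C_m}(1-q^{1/2})$, not from a cancellation that is neither present nor uniform in $m$.
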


\begin{prop}\label{propconv2}
The series $\Delta _1'+\Delta _3'+\Delta _5'+\cdots$ and 
$\Delta _2'+\Delta _4'+\Delta _6'+\cdots$ are uniformly convergent for 
$q\in [0,1]$. 
\end{prop}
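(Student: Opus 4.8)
The plan is to establish uniform convergence on $[0,1]$ of the two derived series $\sum_{m\ \mathrm{odd}}\Delta_m'$ and $\sum_{m\ \mathrm{even}}\Delta_m'$ by producing a summable majorant for $|\Delta_m'|$ that is independent of $q\in[0,1]$. The starting point is the explicit factorization \eqref{DeltaPsi}, namely $\Delta_m=U_m(1-q^{1/2})(K_m+L_m+M_m)$, where $U_m=(-1)^mq^{A_m+k+m-1/2}/T_m$ and $A_m=km+m(m-1)/2$. The decisive feature is the exponent $A_m+k+m-1/2$, which grows quadratically in $m$ (like $m^2/2$); this is what will force the series to converge geometrically fast and, crucially, uniformly up to the endpoint $q=1$.

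First I would differentiate the product $\Delta_m=U_m(1-q^{1/2})(K_m+L_m+M_m)$ by the Leibniz rule, writing $\Delta_m'$ as a sum of three terms in which the derivative falls on $U_m$, on $(1-q^{1/2})$, and on $(K_m+L_m+M_m)$ respectively. The point is that on $[0,1]$ every factor appearing here is bounded by an absolute constant times a fixed power of $q$: the denominators $T_m$ and the factors $(1+q^{\cdots})$ are bounded below by $1$ and above by $8$, each bracket $(1-q^{\cdots})$ lies in $[0,1]$, and the factor $(1-q^{1/2})$ is likewise bounded. The only subtle pieces are (i) the derivative of $U_m$, which by logarithmic differentiation produces a factor of size $O(A_m)=O(m^2)$ coming from differentiating $q^{A_m+k+m-1/2}$, together with bounded contributions from differentiating $1/T_m$; and (ii) the derivatives of $K_m$, $L_m$, $M_m$, which again contribute factors of polynomial size in $m$ because each exponent is linear in $m$. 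After collecting, one obtains a bound of the shape $|\Delta_m'(q)|\le C\,m^2\,q^{\beta_m}$ on $[0,1]$, where $\beta_m$ is the smallest exponent surviving the differentiation; tracking the lowest power carefully, $\beta_m$ still grows at least linearly (indeed quadratically) in $m$, so $q^{\beta_m}\le 1$ uniformly and, more usefully, the coefficients decay.

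The key estimate is therefore to bound $\sup_{q\in[0,1]}|\Delta_m'(q)|$ by a term of a convergent numerical series. Here I would exploit that the genuinely $q$-dependent mass sits in $q^{A_m+\cdots}$ with $A_m\sim m^2/2$, while the polynomial prefactor from the differentiation is only $O(m^2)$. Since $\sup_{q\in[0,1]}q^{N}=1$ is too crude to give summability by itself, the real mechanism is more delicate: one must observe that the cancellation built into $\Delta_m=\Psi_m+\Psi_{m+1}$ (noted in the text as the reason for pairing consecutive $\Psi$'s) produces the extra vanishing factors $(1-q^{1/2})$ and $(1-q^{k+m})$, and that combining these with the high power $q^{A_m}$ yields a uniform bound of the form $\sup_{q}|\Delta_m'(q)|\le C/m^{2}$ (or faster), whence the Weierstrass $M$-test applies and both subseries converge uniformly on $[0,1]$.

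The main obstacle I anticipate is obtaining genuine uniformity at the right endpoint $q=1$. Away from $q=1$ the factor $q^{A_m}$ with $A_m\to\infty$ trivially kills everything, but at $q=1$ this factor equals $1$, so one cannot rely on the exponential smallness of $q^{A_m}$ there and must instead extract decay in $m$ purely from the polynomial/vanishing-factor structure near $q=1$. This is exactly where the paired combination $\Delta_m$ earns its keep: the product of the two ``almost-vanishing'' factors $(1-q^{1/2})(1-q^{k+m})$, together with the internal cancellation that makes $K_m+L_m+M_m$ small, must be shown to beat the $O(m^2)$ growth from differentiation uniformly in a neighborhood of $q=1$. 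Handling this borderline behavior — presumably by splitting $[0,1]$ into a region $q\le 1-\delta_m$, where $q^{A_m}$ supplies the decay, and a boundary region $q\in[1-\delta_m,1]$, where the vanishing factors and Lemma~\ref{lmvalue}-type bounds on the derivative supply it — is the technical heart of the proof.
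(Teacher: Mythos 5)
Your setup --- Leibniz differentiation of the factorization (\ref{DeltaPsi}) and a Weierstrass $M$-test with bounds of order $1/m^2$ --- is the same as the paper's, and you locate the difficulty correctly: at $q=1$ the power $q^{A_m+k+m-1/2}$ gives nothing, so the factor of size $O(m^2)$ produced by differentiating it must be beaten by the vanishing factors, uniformly near $q=1$. But your proposal stops exactly where the proof has to begin: the decisive estimate $\sup_{q\in [0,1]}|\Delta _m'|\leq C/m^2$ is asserted (``must be shown to beat'', ``presumably by splitting'') rather than proved, and this is the entire content of the proposition. Two of your supporting remarks are also off. First, Lemma~\ref{lmvalue} is not a usable tool for the boundary region: it evaluates $\Phi _m'$ at the single point $q=1$ (indeed $\Psi _m'(1)=0$, hence $\Delta _m'(1)=0$), and pointwise vanishing at $q=1$ gives no control on a neighborhood of $1$ that is uniform in $m$. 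Second, no ``internal cancellation'' among $K_m$, $L_m$, $M_m$ is used or needed: the paper bounds the three products $U_m(1-q^{1/2})K_m$, $U_m(1-q^{1/2})L_m$, $U_m(1-q^{1/2})M_m$ separately; the only cancellation that matters is the one already encoded in the explicit vanishing factors.

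The paper's missing ingredient is elementary but specific. After disposing of $q\in [0,1/2]$ by d'Alembert's criterium, it splits the quadratic exponent $W_m:=A_m+m+1/2\sim m^2/2$ into three equal parts and attaches $q^{W_m/3}$ to each vanishing factor, e.g. $U_m(1-q^{1/2})K_m=(-1)^mR^*_mR^{\dagger}_mR^{\flat}_mq^{k-1}/T_m$ with $R^*_m=q^{W_m/3}(1-q^{1/2})$, $R^{\dagger}_m=q^{W_m/3}(1-q^{k+m})$, $R^{\flat}_m=q^{W_m/3}(1-q^{k+m+1})$. Each factor is then of the form $f_m$ or $F_m$ of Notation~\ref{notafF}, and Lemma~\ref{lmfF} (an explicit maximization of $q^{C_m}(1-q^{B_m})$ on $[0,1]$) gives the uniform bounds $|R^*_m|=O(1/m^2)$, $|R^{\dagger}_m|,|R^{\flat}_m|=O(1/m)$, $|(R^*_m)'|=O(1)$, $|(R^{\dagger}_m)'|,|(R^{\flat}_m)'|=O(m)$ on $(1/2,1)$; every Leibniz term is then $O(q^{k-1}/m^2)$ (Lemma~\ref{lminequalities}). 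This lemma is precisely the quantitative version of the $q\to 1$ trade-off you gesture at, carried out factor by factor so that no region-splitting is needed. For the record, your two-region alternative can be made rigorous, but the threshold is delicate: bounding the worst Leibniz term by $m^2q^{A_m}$ for $q\leq 1-\delta _m$ and by $m^2\cdot \delta _m\cdot (k+m)\delta _m\cdot (k+m+1)\delta _m\sim m^4\delta _m^3$ for $q\in [1-\delta _m,1]$, the natural choices both fail ($\delta _m=1/m$ makes the boundary term grow like $m$; $\delta _m=1/m^2$ makes the interior factor $e^{-A_m\delta _m}=O(1)$, not small); one needs, e.g., $\delta _m=m^{-2+\epsilon}$ with $0<\epsilon <1/3$ to make both regions summable. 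That this window is nonempty is exactly the verification your proposal defers, so as written the argument has a genuine gap.
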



\begin{proof}[Proof of part (1) of Theorem~\ref{tmfuncphik}]
The first two claims follow from the convergence of the series $\varphi _k$ 
for $q\in (0,1)$. The third claim results from Propositions~\ref{propconv1} 
and \ref{propconv2} and from Lemma~\ref{lmvalue}. Indeed, on every interval 
$[\alpha ,\beta ]\subset (0,1)$, 
the sequence of functions $\Phi _m$ converges uniformly to $\varphi _k$ as 
$m\rightarrow \infty$; one has $\Phi _m(1)=1/2=\varphi _k(1)$. 
For $m$ even, resp. for $m$ odd, one has 

$$\begin{array}{cclcll}
\Phi _m&=&1+\Psi _1+\cdots +\Psi _m&=&
1+\Delta _1+\Delta _3+\cdots +\Delta _{m-1}&{\rm ,~~~resp.}\\ \\ 
\Phi _m&=&1-q^k+\Psi _2+\cdots +\Psi _m&=&
1-q^k+\Delta _2+\Delta _4+\cdots +\Delta _{m-1}&.\end{array}$$
The existence of the left derivative at $1$ follows 
from Proposition~\ref{propconv2}; its value is implied by Lemma~\ref{lmvalue}. 
\end{proof} 

To prove Propositions~\ref{propconv1} and \ref{propconv2} we introduce 
some notation:

\begin{nota}\label{notafF}  
{\rm We denote by $f_m$ 
and $F_m$ functions respectively of the form 

$$f_m:=q^{C_m}(1-q^{B_m})~~~\, {\rm and}~~~\, 
F_m:=q^{C_m}(1-q^g)~~~\, ,~~~\, q\in [0,1]~~~\, ,~~~\, m\in \mathbb{N}~,$$ 
where $C_m:=am^2+bm+c$, 
$B_m=gm+h$, $a>0$, $b\geq 0$, $c\geq 0$, $g>0$ and $h\geq 0$.}
\end{nota}

We use the following lemma whose 
proof is straightforward: 

\begin{lm}\label{lmfF} 
(1) The function $f_m$ is positive-valued on $(0,1)$, $f_m(0)=f_m(1)=0$, 
its maximum is attained for $q=\alpha _m:=(C_m/(C_m+B_m))^{1/B_m}=1-O(1/m^2)$ 
and equals 

\begin{equation}\label{eqfm}
f_m(\alpha _m)=\left( \frac{C_m}{C_m+B_m}\right) ^{C_m/B_m}
\frac{B_m}{C_m+B_m} \left\{ 
\begin{array}{cl}\leq&\frac{B_m}{C_m+B_m}=\frac{gm+h}{am^2+(b+g)m+c+h}\\ \\ 
=&(e^{-1}g/am)(1+o(1))
\end{array}\right.
\end{equation} 
For $m$ sufficiently large, one has $\alpha _m<\alpha _{m+1}<1$.

(2)  The function $F_m$ is positive-valued on $(0,1)$, 
$F_m(0)=F_m(1)=0$. For $m$ sufficiently large,  
its maximum is attained for $q=\beta _m:=(C_m/(C_m+g))^{1/g}$ 
and equals 

\begin{equation}\label{eqFm}
F_m(\beta _m)=\left( \frac{C_m}{C_m+g}\right) ^{C_m/g}
\frac{g}{C_m+g} \left\{ 
\begin{array}{cl}\leq&\frac{g}{C_m+g}=\frac{g}{am^2+bm+c+g}\\ \\ =&
(e^{-1}g/am^2)(1+o(1))\end{array}\right.\end{equation} 
For $m$ sufficiently large, one has $\beta _m<\beta _{m+1}<1$.
\end{lm}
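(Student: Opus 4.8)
The plan is to treat each of $f_m$ and $F_m$ as a one-variable optimization problem on $[0,1]$ and to extract every assertion from the sign of the first derivative; the whole statement is elementary calculus, so I expect no genuine difficulty apart from the monotonicity of the maximizers, which requires one extra order in the asymptotic expansion.

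I would begin with $f_m$. Since $a>0$, $b\ge 0$, $c\ge 0$ force $C_m=am^2+bm+c\ge a>0$ for every $m\ge 1$, the factor $q^{C_m}$ vanishes at $q=0$, while $1-q^{B_m}$ vanishes at $q=1$ (recall $B_m=gm+h>0$); both factors being strictly positive on $(0,1)$, this gives at once $f_m>0$ on $(0,1)$ together with $f_m(0)=f_m(1)=0$. Differentiating and pulling out $q^{C_m-1}$ would yield $f_m'(q)=q^{C_m-1}\big(C_m-(C_m+B_m)q^{B_m}\big)$, whose bracket is strictly decreasing in $q$ and has a single zero in $(0,1)$, namely where $q^{B_m}=C_m/(C_m+B_m)$, i.e. at $q=\alpha_m$. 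Thus $\alpha_m$ is the unique interior critical point and, by the boundary values, the global maximizer. To evaluate $f_m(\alpha_m)$ I would substitute $\alpha_m^{B_m}=C_m/(C_m+B_m)$, whence $1-\alpha_m^{B_m}=B_m/(C_m+B_m)$ and $\alpha_m^{C_m}=(\alpha_m^{B_m})^{C_m/B_m}=(C_m/(C_m+B_m))^{C_m/B_m}$; multiplying the two reproduces exactly the closed form in (\ref{eqfm}). The upper bound in (\ref{eqfm}) follows by discarding the factor $(C_m/(C_m+B_m))^{C_m/B_m}\le 1$ and rewriting $B_m/(C_m+B_m)=(gm+h)/(am^2+(b+g)m+c+h)$.

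For the asymptotics I would set $y_m:=B_m/C_m=O(1/m)\to 0$ and use $(C_m/(C_m+B_m))^{C_m/B_m}=(1+y_m)^{-1/y_m}\to e^{-1}$ (since $-\tfrac1y\ln(1+y)\to -1$), together with $B_m/(C_m+B_m)\sim g/(am)$; combining them gives $f_m(\alpha_m)=(e^{-1}g/am)(1+o(1))$. The estimate $\alpha_m=1-O(1/m^2)$ comes from the same expansion: $\ln\alpha_m=-\tfrac1{B_m}\ln(1+y_m)=-1/C_m+O(1/m^3)$, so $\alpha_m=\exp(-1/C_m+O(1/m^3))=1-O(1/m^2)$. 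The one assertion that needs slightly more care, and which I regard as the main (if modest) obstacle, is $\alpha_m<\alpha_{m+1}<1$ for large $m$. The bound $\alpha_m<1$ is immediate from $C_m/(C_m+B_m)<1$. For the strict increase I would compare the leading terms of $\ln\alpha_m=-1/C_m+O(1/m^3)$: since $C_m$ is increasing, the gap $1/C_m-1/C_{m+1}=(C_{m+1}-C_m)/(C_mC_{m+1})\asymp 1/m^3$ is positive and dominates the $O(1/m^4)$ change in the remainder, so $\ln\alpha_m$, and hence $\alpha_m$, is eventually increasing.

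Finally, part (2) is the verbatim argument with the exponent $B_m=gm+h$ replaced by the constant $g$. The derivative is $F_m'(q)=q^{C_m-1}\big(C_m-(C_m+g)q^g\big)$, its unique interior zero is $\beta_m=(C_m/(C_m+g))^{1/g}\in(0,1)$ (so the maximum is attained there), and substituting $\beta_m^g=C_m/(C_m+g)$ reproduces (\ref{eqFm}). The only genuine change is in the decay rate: now the cofactor $g/(C_m+g)\sim g/(am^2)$ no longer carries a growing $B_m$, which yields $F_m(\beta_m)=(e^{-1}g/am^2)(1+o(1))$, while $(C_m/(C_m+g))^{C_m/g}=(1+g/C_m)^{-C_m/g}\to e^{-1}$ exactly as before. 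The inequality $\beta_m<\beta_{m+1}<1$ follows from $\ln\beta_m=-1/C_m+O(1/m^4)$ by the same comparison of the $\asymp 1/m^3$ leading gap against a lower-order remainder.
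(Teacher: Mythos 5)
Your proposal is correct, and it is precisely the ``straightforward'' elementary-calculus argument that the paper intends but omits (the paper states the lemma with the remark that its proof is straightforward and gives no details): locate the unique interior critical point from the factored derivative, substitute $\alpha_m^{B_m}=C_m/(C_m+B_m)$ (resp.\ $\beta_m^{g}=C_m/(C_m+g)$) to get the closed form, and expand logarithms for the asymptotics. Your treatment of the one delicate point, the eventual monotonicity of $\alpha_m$ and $\beta_m$, is also sound, since the increment $1/C_m-1/C_{m+1}\asymp 1/m^3$ of the leading term indeed dominates the increment of the correction term $B_m/(2C_m^2)$ (resp.\ $g/(2C_m^2)$), which is of order $1/m^4$ (resp.\ smaller), as one sees by differencing that explicit rational function of $m$.
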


\begin{proof}[Proof of Proposition~\ref{propconv1}]
We use the representation (\ref{DeltaPsi}) of the functions $\Delta _m$. 
The function $U_m(1-q^{1/2})K_m$ is of the form $F_mV_m$ 
(with $F_m:=q^{A_m+k+m-1/2}(1-q^{1/2})$ and $V_m:=(-1)^mK_m/T_m$, hence 
with $a=1/2$, $b=k+1/2$ and $c=k-1/2$), where 
the function $|V_m|$ is bounded on $[0,1]$ by some constant independent of $m$ 
(one has $|V_m|\leq 1$).
Similar statements holds true for the functions $U_m(1-q^{1/2})L_m$ 
and $U_m(1-q^{1/2})M_m$. Hence $|\Delta _m|=O(1/m^2)$ (see part (2) of 
Lemma~\ref{lmfF}) from which the proposition follows.
\end{proof}

\begin{proof}[Proof of Proposition~\ref{propconv2}]
For $q\in [0,1/2]$, the uniform convergence of the two series results from 
d'Alembert's criterium, so we assume that $q\in (1/2,1)$. 
We set $W_m:=A_m+m+1/2=m^2/2+(k+1/2)m+1/2$ and 
$\tilde{U}_m:=U_m(1-q^{1/2})K_m$.  
Hence  

$$\begin{array}{lll}
\tilde{U}_m=(-1)^mR^*_mR^{\dagger}_mR^{\flat}_mq^{k-1}/T_m~~~,&{\rm where}&
R^*_m:=q^{W_m/3}(1-q^{1/2})~~~,\\ \\  
R^{\dagger}_m:=q^{W_m/3}(1-q^{k+m})&{\rm and}&
R^{\flat}_m:=q^{W_m/3}(1-q^{k+m+1})~.\end{array}$$
We similarly represent the function 
$U^{\circ}_m:=U_m(1-q^{1/2})L_m$ in the form 

$$\begin{array}{lll}
U^{\circ}_m=(-1)^mP^*_mP^{\dagger}_mP^{\flat}_mq^{k-1}/T_m~~~,&{\rm where}&
P^*_m:=q^{W_m/3}(1-q^{1/2})=R^*_m~~~,\\ \\  
P^{\dagger}_m:=q^{W_m/3}(1-q^{k+m})=R^{\dagger}_m&{\rm and}&
P^{\flat}_m:=q^{W_m/3+k+m+3/2}(1-q^{k+m-1})\end{array}$$
and finally we set 
$U^{\sharp}_m:=U_m(1-q^{1/2})M_m$ and 

$$\begin{array}{lll}
U^{\sharp}_m=(-1)^{m+1}Q^*_mQ^{\dagger}_mQ^{\flat}_mq^{k-1}/T_m~~~,&{\rm where}&
Q^*_m:=q^{W_m/3}(1-q^{1/2})=R^*_m~~~,\\ \\  
Q^{\dagger}_m:=q^{W_m/3}(1+q^{k+m-1/2})&{\rm and}&
Q^{\flat}_m:=q^{W_m/3+2k+2m+1}(1-q)~.\end{array}$$

The proposition results from the following lemma:

\begin{lm}\label{lminequalities}
There exist constants $c_i>0$, $i=1$, $2$ and $3$, such that for 
$q\in (1/2,1)$, one has $|(\tilde{U}_m)'|\leq c_1q^{k-1}/m^2$, 
$|(U^{\circ}_m)'|\leq c_2q^{k-1}/m^2$ and $|(U^{\sharp}_m)'|\leq c_3q^{k-1}/m^2$. 
\end{lm}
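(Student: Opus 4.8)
The plan is to prove the three derivative estimates in Lemma~\ref{lminequalities} by a uniform, factor-by-factor analysis of the products defining $\tilde{U}_m$, $U^{\circ}_m$ and $U^{\sharp}_m$. The guiding idea is that each of these functions is written (up to the common factor $(-1)^{\pm m}q^{k-1}/T_m$) as a product of three factors of the shape $q^{W_m/3}(1-q^s)$ or $q^{W_m/3}(1\pm q^{t})$ with exponents growing linearly in $m$, and the exponent $W_m/3=m^2/6+(k/2+1/4)m+1/6$ grows \emph{quadratically}. Differentiating a product, one gets a sum of terms in which the derivative hits one factor while the other two are left intact; so it suffices to bound each factor and its derivative and then combine.

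First I would record the two elementary facts I need on the interval $q\in(1/2,1)$. For a single factor $G(q)=q^{C}(1-q^{B})$ with $C=C(m)$ quadratic and $B=B(m)$ linear in $m$, Lemma~\ref{lmfF} already gives $0\le G\le (e^{-1}g/am^2)(1+o(1))=O(1/m^2)$. The companion estimate I would establish is $|G'(q)|=O(1/m)$ uniformly on $(1/2,1)$: writing $G'=q^{C-1}\!\left(C-(C+B)q^{B}\right)$, one checks that $C-(C+B)q^{B}$ vanishes exactly at the maximum $q=\alpha_m$ of $G$, is of size $O(m)$ at worst (being $C\cdot O(1)$ times a quantity $q^{B}$ that is bounded by $1$), while the prefactor $q^{C-1}$ is itself $O(1/m)$ after the same $(C/(C+B))^{C/B}$ balancing used in the proof of Lemma~\ref{lmfF}; the product is $O(1/m)$. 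A factor of the form $q^{C}(1+q^{t})$ is handled the same way, and is in fact easier since it and its derivative are each $O(1/m)$ or better because the bracket never changes sign. I would state these two sub-estimates as a short auxiliary claim so the main computation becomes bookkeeping.

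Next I would apply the product rule. Take $\tilde{U}_m=(-1)^mR^*_mR^{\dagger}_mR^{\flat}_mq^{k-1}/T_m$. Since $T_m\ge 1$ and $T_m$ is a product of three factors of the form $1+q^{\,\cdot}$, one verifies that $1/T_m$ is bounded with bounded derivative on $(1/2,1)$ (independently of $m$, as the exponents $k+m\pm$ grow so that $q^{k+m}\to 0$). Each of $R^*_m,R^{\dagger}_m,R^{\flat}_m$ is $O(1/m^2)$ with derivative $O(1/m)$ by the auxiliary claim. Differentiating the five-fold product $R^*_mR^{\dagger}_mR^{\flat}_m\cdot q^{k-1}\cdot(1/T_m)$ produces a sum of terms; in every term at most one factor is differentiated, so each term contains at least two undifferentiated $R$-factors contributing $O(1/m^2)\cdot O(1/m^2)=O(1/m^4)$, while the differentiated factor contributes at most $O(1/m)$ (when an $R$-factor is hit) or $O(1)$ (when $q^{k-1}/T_m$ is hit, which then leaves all three $R$-factors intact, giving $O(1/m^6)$). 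In all cases the total is $O(1/m^2)\cdot q^{k-1}$, which is the claimed bound $|(\tilde{U}_m)'|\le c_1q^{k-1}/m^2$. The computations for $U^{\circ}_m$ and $U^{\sharp}_m$ are identical in structure: $P^{\flat}_m$ and $Q^{\flat}_m$ carry extra positive powers $q^{k+m+3/2}$ and $q^{2k+2m+1}$ in front, which only \emph{improve} the bounds, so the same reasoning yields $c_2$ and $c_3$.

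The main obstacle I anticipate is purely technical: making the $O(1/m)$ bound on a single factor's derivative genuinely \emph{uniform} in both $q\in(1/2,1)$ and $m$, rather than pointwise. The delicate point is that the bracket $C-(C+B)q^B$ in $G'$ is $O(m)$, so one must genuinely exploit the decay $q^{C-1}=O(1/m)$ coming from the quadratic exponent to absorb that factor $m$; this is exactly where the restriction to $q\ge 1/2$ (already invoked in the statement) does the work, since for $q\le 1/2$ d'Alembert already gives convergence and the quadratic $q^{C_m}$ decays geometrically. I would therefore carry out the balancing carefully once, in the auxiliary claim, and then treat the assembly of the three estimates as routine. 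Once Lemma~\ref{lminequalities} is in hand, Proposition~\ref{propconv2} follows immediately: summing $c_iq^{k-1}/m^2$ over the odd (resp. even) indices gives a convergent majorant independent of $q$, so the differentiated series $\sum\Delta_m'$ converge uniformly on $[0,1]$ by the Weierstrass $M$-test.
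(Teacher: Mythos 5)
Your overall strategy --- differentiate each product by the Leibniz rule and bound every factor and its derivative separately via Lemma~\ref{lmfF} --- is exactly the paper's. But the auxiliary estimates you propose to feed into that bookkeeping are false, and this is a genuine gap rather than a technicality, because those estimates are the entire content of the lemma. Your auxiliary claim asserts that a factor $G(q)=q^{C_m}(1-q^{B_m})$, with $C_m$ quadratic and $B_m$ linear in $m$, satisfies $G=O(1/m^2)$ and $|G'|=O(1/m)$ uniformly on $(1/2,1)$. Both parts fail. For the size: this is the $f_m$ case of part (1) of Lemma~\ref{lmfF}, whose bound is $B_m/(C_m+B_m)=O(1/m)$; the $O(1/m^2)$ bound belongs to the $F_m$ case with \emph{fixed} gap exponent, so it applies to $R^*_m=q^{W_m/3}(1-q^{1/2})$ but not to $R^{\dagger}_m$ or $R^{\flat}_m$ --- you have conflated the two cases of the lemma. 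For the derivative: from $G'=q^{C_m-1}\bigl(C_m-(C_m+B_m)q^{B_m}\bigr)$ one reads off $G'(1)=-B_m$, which is of order $m$, not $O(1/m)$. Your justification (``the prefactor $q^{C_m-1}$ is itself $O(1/m)$'') is wrong precisely where it matters, near $q=1$, where $q^{C_m-1}\to 1$: the compensation between $q^{C_m}$ and $1-q^{B_m}$ that makes $G$ small does not survive differentiation. The same misconception undermines your claims that $1/T_m$ has bounded derivative (in fact $(1/T_m)'(1)$ is of order $m$, cf.~(\ref{eqestimR3})) and that a factor of the form $q^{W_m/3}(1+q^{t})$ together with its derivative is ``$O(1/m)$ or better'' (this factor equals $2$ at $q=1$ and its derivative there is of order $m^2$, cf.~(\ref{eqestimQ1})).

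What saves the statement --- and what the paper actually does --- is that the correct, asymmetric bounds still close the estimate, but with no room to spare: $|R^*_m|=O(1/m^2)$ with $|(R^*_m)'|=O(1)$, while $|R^{\dagger}_m|,|R^{\flat}_m|=O(1/m)$ with derivatives $O(m)$, and $|(1/T_m)'|=O(m)$. In each Leibniz term the loss of order $m$ caused by differentiating a linear-gap factor or $1/T_m$ (or the loss of the $O(1/m^2)$ factor when $R^*_m$ itself is differentiated) is absorbed by the remaining undifferentiated factors, yielding exactly $O(q^{k-1}/m^2)$ --- not the $O(q^{k-1}/m^4)$ your accounting predicted. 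In other words, your proof as written would collapse at the attempt to prove the auxiliary claim, and repairing it forces you into precisely the inequalities (\ref{eqestimR1})--(\ref{eqestimQ3}) of the paper's proof, including the separate (and structurally different) treatment of $U^{\sharp}_m$, where the smallness comes from $Q^*_m$ and $Q^{\flat}_m$ rather than from $Q^{\dagger}_m$.
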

\end{proof}

\begin{proof}[Proof of Lemma~\ref{lminequalities}]
We differentiate the functions 
$\tilde{U}_m$, $U^{\circ}_m$ and $U^{\sharp}_m$ as products of functions. 
To prove the existence of the constants $c_i$ we obtain estimations for the 
moduli of the factors $R_m^*$, $R_m^{\flat}$, $\ldots$ 
and for the moduli of their 
derivatives. Consider 
first the function $\tilde{U}_m$. The factor $R^*_m$ is a function of the 
form $F_m$ (see Notation~\ref{notafF}), so one can apply part (2) 
of Lemma~\ref{lmfF} to obtain the estimation

\begin{equation}\label{eqestimR1}
|R^*_m|\leq (1/2)/(W_m/3+1/2)<3/2W_m~.
\end{equation}
One has $(R^*_m)'=(W_m/3q)R^*_m-q^{W_m/3-1/2}/2$. From inequality (\ref{eqestimR1}) 
one concludes that for $q\in (1/2,1)$, 
 
\begin{equation}\label{eqestimR2}
|(R^*_m)'|\leq 1/2q+1/2<3/2~.
\end{equation}
For the factor $T_m$ one gets

\begin{equation}\label{eqestimR3}
|1/T_m|\leq 1~~~\, {\rm and}~~~\, 
|(1/T_m)'|\leq |T_m'|/|T_m|^2\leq |T_m'|\leq 12k+12m+6~.
\end{equation}
One can apply part (1) of Lemma~\ref{lmfF} to the factor $R_m^{\flat}$ which is 
of the form $f_m$:

\begin{equation}\label{eqestimR4}
|R_m^{\flat}|\leq (k+m+1)/(k+m+1+W_m/3)<3(k+m+1)/W_m\leq 6/m
\end{equation}
(the rightmost inequality is checked directly) 
and, as 

$$(R_m^{\flat})'=(W_m/3q)R_m^{\flat}-(k+m+1)q^{W_m/3+k+m}~,$$ 
one deduces the estimation (using $q>1/2$) 

\begin{equation}\label{eqestimR5}
|(R_m^{\flat})'|\leq 2(k+m+1)+(k+m+1)=3(k+m+1)~.
\end{equation}
By complete analogy one obtains the inequalities

\begin{equation}\label{eqestimR6}
|R_m^{\dagger}|\leq 3(k+m)/W_m\leq 6/m~~~\, \, {\rm and}~~~\, \, 
|(R_m^{\dagger})'|\leq 3(k+m)~.
\end{equation}
From inequalities (\ref{eqestimR1}) and (\ref{eqestimR3}) results that 

\begin{equation}\label{eqestimR7}
|R_m^*(1/T_m)'|\leq (3/2W_m)(12k+12m+6)\leq 36/m
\end{equation}
(the rightmost inequality is to be checked directly). Hence for the products 
resulting from the differentiation of $\tilde{U}_m$ one obtains the 
following inequalities (using $|(q^{k-1})'|=|(k-1)q^{k-1}/q|\leq |2(k-1)q^{k-1}|$):

\begin{equation}\label{eqestimR8}
\begin{array}{l}
|(-1)^mR^*_mR^{\dagger}_mR^{\flat}_m(1/T_m)'q^{k-1}|\leq 
(36/m)(6/m)^2q^{k-1}=
6^4q^{k-1}/m^3\leq 6^4q^{k-1}/m^2~,\\ \\ 
|(-1)^m(R^*_m)'R^{\dagger}_mR^{\flat}_m(1/T_m)q^{k-1}|
\leq (3/2)(6/m)^2q^{k-1}=54q^{k-1}/m^2~,\\ \\ 
|(-1)^mR^*_m(R^{\dagger}_m)'R^{\flat}_m(1/T_m)q^{k-1}|\leq 
(3/2W_m)3(k+m)6q^{k-1}/m\leq 54q^{k-1}/m^2~,\\ \\ 
|(-1)^mR^*_mR^{\dagger}_m(R^{\flat}_m)'(1/T_m)q^{k-1}|\leq 
(3/2W_m)(6/m)3(k+m+1)q^{k-1}\leq 54q^{k-1}/m^2~,\\ \\ 
|(-1)^mR^*_mR^{\dagger}_mR^{\flat}_m(1/T_m)(q^{k-1})'|\leq 
(3/2W_m)(6/m)^22|k-1|q^{k-1}\leq 18q^{k-1}/m^2~.
\end{array}
\end{equation}
Thus $|\tilde{U}_m'|\leq c_1q^{k-1}/m^2$, where $c_1:=6^4+3\times 54+18=1476$. 

For the product $U^{\circ}_m$ we similarly obtain the inequalities

\begin{equation}\label{eqestimP}
\begin{array}{l}
|P^{\flat}_m|\leq (k+m-1)/(k+m-1+k+m+3/2+W_m/3)\leq 6/m~~~~~~~~~~\, 
{\rm and}\\ \\ 
|(P^{\flat}_m)'|\leq (W_m/3+k+m+3/2)|P_m^{\flat}/q|+(k+m-1)q^{k+m-2+k+m+3/2+W_m/3}\\ \\ 
\leq 2+k+m-1=k+m+1~.\end{array}
\end{equation}
(the rest of the factors are present in $\tilde{U}_m$ as well). 
Thus one obtains by complete analogy the inequality 
$|(U^{\circ}_m)'|\leq c_1q^{k-1}$ (i.e. one can set $c_2:=c_1$). 

\begin{nota}
{\rm We set $\Xi :=W_m/3+k+m-3/2$ and $\Lambda :=W_m/3+2k+2m+1$.}
\end{nota}

When considering the term $U^{\sharp}_m$, one obtains the inequalities about 
$Q_m^{\dagger}$:

\begin{equation}\label{eqestimQ1}
|Q_m^{\dagger}|\leq 2~~~\, {\rm and}~~~\, |(Q_m^{\dagger})'|\leq 
(W_m/3q)|Q_m^{\dagger}|+(k+m-1/2)q^{\Xi}\leq W_m+\Xi +1~.
\end{equation}
and the ones concerning $Q_m^{\flat}$:

\begin{equation}\label{eqestimQ2}
|Q_m^{\flat}|\leq 1/\Lambda ~~~\, \, {\rm and}~~~\, \, 
|(Q_m^{\flat})'|\leq \Lambda |Q_m^{\flat}|+q^{\Lambda}\leq 2~.
\end{equation}
Therefore the analogs of inequalities (\ref{eqestimR8}) read: 

\begin{equation}\label{eqestimQ3}
\begin{array}{l}
|(-1)^mQ^*_mQ^{\dagger}_mQ^{\flat}_m(1/T_m)'q^{k-1}|\leq 
(36/m)(2/\Lambda )q^{k-1}\leq 36q^{k-1}/m^2~,\\ \\ 
|(-1)^m(Q^*_m)'Q^{\dagger}_mQ^{\flat}_m(1/T_m)q^{k-1}|
\leq (3/2)(2/\Lambda )q^{k-1}\leq 18q^{k-1}/m^2\\ \\ 
|(-1)^mQ^*_m(Q^{\dagger}_m)'Q^{\flat}_m(1/T_m)q^{k-1}|\leq 
(3/2W_m)(W_m+\Xi +1)(1/\Lambda )q^{k-1}\leq 6q^{k-1}/m^2\\ \\ 
|(-1)^mQ^*_mQ^{\dagger}_m(Q^{\flat}_m)'(1/T_m)q^{k-1}|\leq 
(3/2W_m)2^2q^{k-1}\leq 12q^{k-1}/m^2\\ \\ 
 |(-1)^mQ^*_mQ^{\dagger}_mQ^{\flat}_m(1/T_m)(q^{k-1})'|\leq 
(3/2W_m)(2/\Lambda )2|k-1|q^{k-1}\leq 36q^{k-1}/m^2~. 
\end{array}
\end{equation}
Thus one can set $c_3:=36+18+6+12+36=108$.

\end{proof}

\begin{lm}\label{lmphik1/2}
For $k\geq 1/2$, the function $\varphi _k$ is decreasing 
on $[0,1/2]$.
\end{lm}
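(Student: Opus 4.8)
The plan is to work directly with the termwise derivative of the defining series~(\ref{defifphik}) and to show it is negative on $(0,1/2]$; by part~(1) of Theorem~\ref{tmfuncphik} the function $\varphi _k$ is $C^1$ on $(0,1)$, so termwise differentiation is legitimate on compact subintervals, and the endpoint $q=0$ is handled by continuity of the derivative. Writing $T_j:=A_jq^{A_j-1}>0$, one has $\varphi _k'(q)=\sum _{j\ge 1}(-1)^jT_j$, so the claim $\varphi _k'<0$ is equivalent to $-\varphi _k'(q)=T_1-T_2+T_3-T_4+\cdots >0$. The obstruction to a one-line Leibniz argument is that $(T_j)$ need not be monotone from the start: using $A_{j+1}-A_j=k+j$ one checks that $T_2>T_1$ can occur on the range in question (already near $k=q=1/2$), so the first term does not dominate and a more careful grouping is required.

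First I would prove the clean statement that the tail is well behaved: for $j\ge 2$, $q\in (0,1/2]$ and $k\ge 1/2$ one has $T_{j+1}<T_j$. Indeed $T_{j+1}/T_j=(A_{j+1}/A_j)\,q^{k+j}$, and since $A_{j+1}/A_j=(j+1)(2k+j)/\bigl(j(2k+j-1)\bigr)$ is decreasing in $k$ while $q^{k+j}\le 2^{-(k+j)}$ is likewise largest at the lower endpoints, the ratio is bounded above by $\bigl((j+1)^2/j^2\bigr)2^{-(j+1/2)}$, which is $<1$ for every $j\ge 2$ (its value at $j=2$ already being $9/(16\sqrt 2)<1$, and both factors decreasing in $j$). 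Hence $T_2>T_3>T_4>\cdots\to 0$.

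With this in hand the grouping $-\varphi _k'=(T_1-T_2+T_3-T_4)+(T_5-T_6+T_7-\cdots )$ places the entire irregular part into the first four terms: the remaining series $T_5-T_6+T_7-\cdots$ has positive, strictly decreasing terms and is therefore a convergent alternating series with positive sum. Thus it suffices to prove the finite inequality $T_1-T_2+T_3-T_4\ge 0$, i.e., after dividing by $q^{k-1}>0$,
\[
P(q,k):=k-(2k+1)q^{k+1}+(3k+3)q^{2k+3}-(4k+6)q^{3k+6}\ \ge\ 0,\qquad q\in (0,1/2],\ k\ge 1/2.
\]

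The main obstacle is precisely this inequality, which is genuinely tight: at the corner $q=k=1/2$ its left-hand side equals only about $0.03$, and every crude estimate (dropping $T_3$, or replacing $T_1+T_3$ by $2\sqrt{T_1T_3}$) fails by a hair, so the term $T_3$ must be kept honestly. I would establish $P\ge 0$ by reducing the number of free parameters: one first argues that for each fixed $q$ the minimum over $k\ge 1/2$ is attained at $k=1/2$, so that it remains to treat the single-variable function $P(q,1/2)=\tfrac12-2q^{3/2}+\tfrac92q^{4}-8q^{15/2}$. For $q\le 2^{-4/3}$ one has $\tfrac12-2q^{3/2}\ge 0$ and $\tfrac92q^{4}-8q^{15/2}\ge 0$ separately, whence $P\ge 0$ at once; on the short interval $[2^{-4/3},1/2]$ the function $P(\cdot ,1/2)$ is decreasing, so its minimum is the endpoint value $P(1/2,1/2)>0$, checked directly. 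I expect the delicate point to be the justification that the overall minimum over $k\ge 1/2$ sits at $k=1/2$, and that is where the care of the write-up will concentrate.
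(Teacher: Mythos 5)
Your argument coincides with the paper's up to its key finite inequality: the paper also differentiates the series termwise, factors out $q^{k-1}$, exploits the alternating structure of the tail (it checks the decrease of term moduli for $j\ge 4$, you for $j\ge 2$), and thereby reduces the lemma to exactly your four-term inequality --- the paper's quantity $g_0$ is precisely $-P(q,k)$, obtained by deleting the first three terms and majorizing the remaining Leibniz series by its first term $(4k+6)q^{3k+6}$. Where you genuinely diverge is in the proof of that two-variable inequality. The paper eliminates $q$ first: it absorbs the fourth term into the third via the uniform estimate $1-(4k+6)q^{k+3}/(3k+3)\ge 0.84$, shows the resulting three-term function is increasing in $q$ on $[0,1/2]$, evaluates at $q=1/2$, and only then studies the resulting function of $k$ (directly for $k\ge 1$, via monotonicity for $k\in[1/2,1)$). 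You eliminate $k$ first (minimum at $k=1/2$) and then run a one-variable analysis in $q$, splitting $[0,1/2]$ at $2^{-4/3}$. Both routes end at the same razor-thin margin of about $0.03$ at the corner $(q,k)=(1/2,1/2)$.

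The gap is the step you yourself flag: the claim that for fixed $q\in(0,1/2]$ the minimum of $P(q,\cdot)$ over $k\ge 1/2$ sits at $k=1/2$ is asserted, not proved, and it is the pivot of your whole argument. The claim is in fact true and provable by elementary bounds, so your route does close. Writing $u:=-\ln q\ge\ln 2$, one has
\[
\frac{\partial P}{\partial k}=1-q^{k+1}\bigl(2-(2k+1)u\bigr)+q^{2k+3}\bigl(3-(6k+6)u\bigr)-q^{3k+6}\bigl(4-(12k+18)u\bigr).
\]
The last summand is positive, since $(12k+18)u\ge 24\ln 2>4$. The second is $\ge -(2-2\ln 2)\,2^{-3/2}>-0.22$, since $(2k+1)u\ge 2\ln 2$ and $q^{k+1}\le 2^{-3/2}$. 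For the third, $q^{2k+3}\bigl((6k+6)u-3\bigr)\le (6k+6)\,u\,e^{-(2k+3)u}\le (6k+6)(\ln 2)\,2^{-(2k+3)}\le 9\ln 2/16<0.39$, because $t\mapsto te^{-(2k+3)t}$ decreases for $t\ge\ln 2>1/(2k+3)$ and $(k+1)2^{-2k}$ decreases in $k\ge 1/2$. Hence $\partial P/\partial k\ge 1-0.22-0.39>0$, which is the monotonicity you need. Your remaining one-variable steps check out: on $[2^{-4/3},1/2]$ one has $6q^{5/2}\le 6\cdot 2^{-5/2}<1.07<1+20\cdot 2^{-8}\le 1+20q^6$, so $P(q,1/2)$ is indeed decreasing there, and $P(1/2,1/2)=0.0299\ldots>0$. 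One cosmetic repair: part (1) of Theorem~\ref{tmfuncphik} concerns $k>1$ only, so it cannot justify termwise differentiation for $k\in[1/2,1]$; simply invoke locally uniform convergence of the differentiated series on compact subsets of $(0,1)$, which is what the paper tacitly does as well.
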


\begin{proof}
One has 
$\varphi _k'/q^{k-1}=\sum _{j=1}^{\infty}(-1)^j(kj+j(j-1)/2)q^{k(j-1)+j(j-1)/2}$. Our 
aim is to show that $\varphi _k'/q^{k-1}<0$ for $q\in [0,1/2]$ from which 
the lemma follows. Denote by $g$ the series obtained from $\varphi _k'/q^{k-1}$ 
by deleting its first three terms, and by $h:=(4k+6)q^{3k+6}$ the first term 
of $g$. For $q\in (0,1/2]$, the series $g$ is a Leibniz one. 
Indeed, it is alternating and 
the modulus of the 
ratio of two consecutive terms equals 

$$B_{k,j}:=\frac{(k(j+1)+j(j+1)/2)q}{kj+j(j-1)/2}\leq 
\frac{k(j+1)+j(j+1)/2}{2kj+j(j-1)}<1~;$$
the last inequality results from the inequalities 
$k(j+1)<2kj$ and $j(j+1)/2<j(j-1)$ which hold true for $j\geq 4$. Besides, 
for each $k$ fixed, one has $\lim _{j\rightarrow \infty}B_{k,j}=q\leq 1/2$. Hence 
for $q\in [0,1/2]$, one has $0\leq g(q)\leq h(q)$. So it suffices to show 
that for $q\in [0,1/2]$, 

\begin{equation}\label{eqg0}
g_0:=-k+(2k+1)q^{k+1}-(3k+3)q^{2k+3}+(4k+6)q^{3k+6}<0~.
\end{equation}
For $q\in [0,1/2]$ and when $k\geq 1/2$ is fixed, 
the quantity 

$$\alpha _k(q):=1-(4k+6)q^{k+3}/(3k+3)$$ 
is minimal for $q=1/2$. The quantity $\alpha _k(1/2)=1-(4k+6)/2^{k+3}(3k+3)$  
is minimal for $k=1/2$ and $\alpha _{1/2}(1/2)=0.84\ldots >0.84$. 
This observation 
allows to majorize the sum of the last two summands of $g_0$ (see (\ref{eqg0})) 
by $-0.84\times (3k+3)q^{2k+3}$. Now our aim is to 
prove that 

$$g_1(q):=-k+(2k+1)q^{k+1}-0.84\times (3k+3)q^{2k+3}<0$$
for $q\in [0,1/2]$, $k\geq 1/2$. The only zeros of the function $g_1'$ are $0$ 
and 

$$(((2k+1)(k+1))/(0.84\times (3k+3)(2k+3)))^{1/(k+2)}~.$$ 
For $k=1/2$, the latter quantity equals 
$0.52\ldots >1/2$; this quantity increases with $k$. For $q$ close to $0$, 
the function $g_1$ is increasing. Hence it is increasing on $[0,1/2]$ (for any 
$k\geq 1/2$ fixed) and 

$$\max _{[0,1/2]}g_1(q)=g_1(1/2)=
-k+(2k+1)/2^{k+1}-0.84\times (3k+3)/2^{2k+3}=:g_*(k)~.$$
Suppose first that $k\geq 1$. Then 

$$-k+(2k+1)/2^{k+1}\leq -k+(2k+1)/4=-(k-1/2)/2\leq 0~,$$ 
so $g_*(k)<0$. For 
$k\in [1/2,1)$, one has

\begin{equation}\label{eqg*}
g_*'=-1+(2-(2k+1)\ln 2)/2^{k+1}+(1.68\times (3k+3)(\ln 2)-2.52)/2^{2k+3}~.
\end{equation}
As $1.68\times (3k+3)(\ln 2)-2.52\leq 1.68\times 6\times (\ln 2)-2.52=
4.46\ldots <4.47$, 
the last summand of $g_*'$ (see (\ref{eqg*})) is 
$<4.47/2^4=0.279375$. The second summand is maximal for $k=1/2$ 
in which case it equals 

$$(2-2\ln 2)/2^{3/2}=0.2169777094\ldots <0.22~.$$
Thus $g_*'\leq -1+0.22+0.279375<0$ and $g_*$ is maximal for $k=1/2$. 
One finds that $g_*(1/2)=-0.0291432189\ldots <0$ which proves the lemma.
\end{proof}


\begin{proof}[Proof of part (2) of Theorem~\ref{tmfuncphik}]
For $q\in [0,1/2]$, the statement results from Lemma~\ref{lmphik1/2}, so 
we assume that $q\in [1/2,1]$. We use the equality 

\begin{equation}\label{eqinfty}
\varphi _k=1-q^k+\Delta _2+\Delta _4+\Delta _6+\cdots ~.
\end{equation}
Hence $\varphi _k'=-kq^{k-1}+\Delta _2'+\Delta _4'+\Delta _6'+\cdots$.  
The functions $\Delta _{2\nu}'$ are sums of terms each of which 
can be majorized by $q^{k-1}c/\nu ^2$, where the constant $c>0$ can be 
chosen independent of $k$, see Lemma~\ref{lminequalities}. Thus 

$$\varphi _k'\leq -q^{k-1}(k-4c\sum _{\nu =1}^{\infty}1/\nu ^2)~.$$
The difference can be made positive by choosing $k$ sufficiently large. 
This proves the theorem. 
\end{proof}

\section{Further geometric properties of the zero set
\protect\label{secfphikcor}}

Denote by $K^{\dagger}\subset \mathbb{R}$ the set 
$\cup _{j=1}^{\infty}(2j-1,2j)$. 

\begin{prop}\label{propKdagger}
For each $a\in K^{\dagger}$ sufficiently large, 
there exists a unique point $(q_a,-q_a^{-a})$, 
$q_a\in (0,1)$, 
such that $\theta (q_a,-q_a^{-a})=0$. For $a>0$, $a\not\in K^{\dagger}$, there 
exists no such point.
\end{prop}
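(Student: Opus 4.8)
The plan is to translate the statement about the curve $x=-q^{-a}$ into a statement about the one-variable functions $\varphi_k$ of (\ref{defifphik}). Since $\varphi_k(q)=\theta(q,-q^{k-1})$, setting $k=1-a$ gives $\theta(q,-q^{-a})=\varphi_{1-a}(q)$, so the points $(q_a,-q_a^{-a})$ sought are exactly the zeros of $\varphi_{1-a}$ in $q\in(0,1)$. First I would record the two endpoint values. From (\ref{eqfunct}) one gets $\varphi_k=1-q^k\varphi_{k+1}$, and together with $\lim_{q\to1^-}\varphi_k=1/2$ (Remark \ref{remsphik}) this yields $\varphi_{1-a}(1^-)=1/2>0$ for every $a$. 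The behaviour at $0^+$ is governed by the term of least exponent $A_j$ in (\ref{defifphik}): a short computation shows the dominant term as $q\to0^+$ carries the sign $(-1)^{[1-a]}$, which is negative exactly when $[a]$ is odd, i.e. when $a\in K^{\dagger}$. Hence $\varphi_{1-a}(q)\to-\infty$ for $a\in K^{\dagger}$, while $\varphi_{1-a}(q)\to+\infty$ for $a\in(2j,2j+1)$, $j\ge1$, and $\varphi_{1-a}(q)\to1$ for $a\in(0,1)$; this is precisely the parity dichotomy of Remark \ref{remsphik}.

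With the endpoints in hand, existence for $a\in K^{\dagger}$ is immediate from the intermediate value theorem, since $\varphi_{1-a}$ is continuous on $(0,1)$ and changes sign between $-\infty$ at $0^+$ and $1/2$ at $1^-$. For non-existence when $0<a<1$ I would argue directly: here $k:=1-a\in(0,1)$, so $k+1>1$ and the upper bound $\varphi_{k+1}<1/(1+q^{k+1})<1$ of Remark \ref{remsphik} applies; multiplying by $q^k>0$ and feeding this into $\varphi_k=1-q^k\varphi_{k+1}$ gives $\varphi_{1-a}=\varphi_k>1-q^k>0$ on $(0,1)$, so there is no zero. The same step shows in fact $\varphi_k>0$ for \emph{every} $k>0$, i.e. no intersection point at all for $a<1$.

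The remaining two assertions --- uniqueness of the zero for $a\in K^{\dagger}$ large, and absence of a zero for $a\in(2j,2j+1)$, $j\ge1$ --- are the heart of the matter, and I expect them to be the main obstacle, because there $k=1-a$ is negative with even integer part, the positivity bounds are unavailable, and the partial sums in (\ref{defifphik}) have many large competing terms. My plan is to read the zeros of $\varphi_{1-a}$ geometrically via Theorem \ref{tmgeom} and the sign rule of Remark \ref{remsdivers2}(1): the monotone graph $C_a:\ x=-q^{-a}$ enters, as $q\to0^+$, the interior of $\Gamma_i$ (where $\theta<0$) precisely when $a\in(2i-1,2i)$, and the gap between $\Gamma_i$ and $\Gamma_{i+1}$ (where $\theta>0$) when $a\in(2i,2i+1)$. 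A zero of $\varphi_{1-a}$ is a crossing of $C_a$ with some $\Gamma_i$; parametrising each branch $B_s$ by its local exponent $\beta_s(q)$, defined by $\xi_s(q)=-q^{-\beta_s(q)}$, one has $\beta_{2i-1}\to2i-1$ and $\beta_{2i}\to2i$ as $q\to0^+$, while both equal the turning-point exponent $\mu_i=\ln(-y_i)/\ln(1/\tilde q_i)$ at $X_i$. The whole argument then reduces to showing that, for $i$ large, $\mu_i\in(2i-1,2i)$ and that $\beta_{2i-1}$, $\beta_{2i}$ are monotone on their branches: this forces the exponents attained on $\Gamma_i$ to fill $(2i-1,2i)$ exactly once and to omit $(2i,2i+1)$, giving uniqueness inside $K^{\dagger}$ and non-existence outside it simultaneously.

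The hardest step is controlling $\mu_i$. The asymptotics $\tilde q_i=1-\pi/2i+o(1/i)$ and $y_i=-e^{\pi}+o(1)$ of Remark \ref{remsdivers1}(3) only give $\mu_i=2i\,(1+o(1))$, which is not sharp enough to place $\mu_i$ relative to the integers $2i-1$ and $2i$; this is exactly why the statement is restricted to $a$ (equivalently $i$) sufficiently large. I would close the gap by combining finer estimates on the spectral data $(\tilde q_i,y_i)$ near $q=1$ with the branch-monotonicity, the latter being accessible through $\beta_s'(q)=\partial_q\varphi_k/\partial_k\varphi_k$ at $k=1-\beta_s(q)$, where the positivity $\partial_k\varphi_k>0$ and the decreasing property of $\varphi_k$ for large index (Theorem \ref{tmfuncphik}(2)) are the natural inputs to transport to this setting.
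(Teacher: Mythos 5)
Your reduction to $\varphi_{1-a}$ (via $\theta(q,-q^{-a})=\varphi_{1-a}(q)$), the endpoint analysis at $q\to 0^+$ and $q\to 1^-$, the IVT existence argument for $a\in K^{\dagger}$, and the positivity argument for $a\in(0,1)$ are all correct (integer values of $a$, which also lie outside $K^{\dagger}$, are left untreated, but that is a minor omission fixable by $1-\varphi_a=q^a\varphi_{a+1}>0$). The genuine gap is that the two assertions carrying the entire content of the proposition --- uniqueness of the zero for large $a\in K^{\dagger}$, and non-existence for $a\in(2i,2i+1)$, $i\geq 1$ --- are not proved but only planned, and the plan as described would fail. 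Its two ingredients are (a) the placement $\mu_i\in(2i-1,2i)$ of the turning-point exponent, for which you yourself note that the available asymptotics $\tilde q_i=1-\pi/2i+o(1/i)$, $y_i=-e^{\pi}+o(1)$ of Remarks~\ref{remsdivers1} give only $\mu_i=2i(1+o(1))$ and are too weak; and (b) monotonicity of the branch exponents $\beta_s(q)$, which you propose to obtain from $\partial\varphi_k/\partial k>0$ and Theorem~\ref{tmfuncphik}(2). But both of those facts are established in the paper only for $k>0$ (indeed, for (2), only for $k$ large and positive), whereas along your branches $k=1-\beta_s(q)\approx 1-2i$ is negative --- precisely the regime where, by Remarks~\ref{remsphik}(1), $\varphi_k$ blows up at $0^+$ and none of the paper's estimates apply. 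So the proposal stalls exactly where the difficulty lies.

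The idea you are missing, and which the paper uses, is to avoid the negative index $1-a$ altogether by splitting off the full Jacobi theta function: write $\theta=\Theta-G$ with $\Theta:=\sum_{j=-\infty}^{\infty}q^{j(j+1)/2}x^j$ and $G:=\sum_{j=-\infty}^{-1}q^{j(j+1)/2}x^j$. The Jacobi triple product gives $\Theta(q,-q^{-a})=\prod_{j=1}^{\infty}(1-q^j)(1-q^{j-a})(1-q^{j+a-1})$, whose sign is read off by counting the negative factors $1-q^{j-a}$, $j<a$: their number is odd exactly when $a\in K^{\dagger}$ (and one factor vanishes when $a\in\mathbb{N}$). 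For $a\in K^{\dagger}$, after multiplying by $q^s$ with $s=j_0(2a-j_0-1)/2$, $j_0=[a]$, the function $q^s\Theta(q,-q^{-a})$ becomes minus a product of positive, decreasing-in-$q$ factors, hence increases from $-\infty$ to $0$ on $(0,1)$; meanwhile $-q^sG(q,-q^{-a})=q^s(1-\varphi_a)$ involves $\varphi$ at the \emph{large positive} index $a$, so Theorem~\ref{tmfuncphik}(2) applies and shows it is positive and increasing, from $0$ to $1/2$. The sum $q^s\theta(q,-q^{-a})$ is therefore strictly increasing from $-\infty$ to $1/2$, which yields uniqueness at a stroke (and is exactly where the hypothesis ``$a$ sufficiently large'' enters); for $a>0$ outside $K^{\dagger}$ the parity count makes $\Theta(q,-q^{-a})\geq 0$ while $-G=1-\varphi_a>0$, so $\theta(q,-q^{-a})>0$ and there is no zero --- for \emph{all} such $a$, with no largeness assumption. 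Your IVT step recovers existence, but you have no substitute for this monotonization, which is what settles uniqueness and non-existence.
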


\begin{rems}\label{remsKdagger}
{\rm (1) The statements of the proposition are illustrated by 
Fig.~\ref{zerosetPTF} -- 
the curve 
$x=-q^{-1.5}$ (with $1.5\in K^{\dagger}$) intersects the curve $\Gamma _1$ while 
the curve $x=-q^{-2.5}$ (with $2.5\not\in K^{\dagger}$) does not intersect 
any of the curves $\Gamma _s$, $s\in \mathbb{N}$.

(2) We denote by $\kappa ^{\triangle}\in \mathbb{N}$ a constant such that for 
$a\geq \kappa ^{\triangle}$, $a\in K^{\dagger}$, the first statement 
of Proposition~\ref{propKdagger} holds true. Hence there exists 
$q^{\triangle}\in (0,1)$ such that the curves $\Gamma _i$, 
$i\leq \kappa ^{\triangle}$, belong to the 
set $\{ x\leq 0$, $q\in (0,q^{\triangle}]\}$. Observe that the curve 
$\Gamma _j$ intersects the curves $x=-q^{-a}$ with $a\in (2j-1,2j)$, therefore 
the property this intersection to be a point is guaranteed for 
$j\geq (\kappa ^{\triangle}+1)/2$.}
\end{rems}

\begin{proof}
We set $\Theta :=\sum _{j=-\infty}^{\infty}q^{j(j+1)/2}x^j$ 
and $G:=\sum _{j=-\infty}^{-1}q^{j(j+1)/2}x^j$, hence $\theta =\Theta -G$. 
From the Jacobi triple product one gets 

\begin{equation}\label{eqJac}
\Theta (q,x)=\prod _{j=1}^{\infty}(1-q^j)(1+xq^j)(1+q^{j-1}/x)~.
\end{equation}
Hence $\Theta ^0:=
\Theta (q,-q^{-a})=\prod _{j=1}^{\infty}(1-q^j)(1-q^{j-a})(1-q^{j+a-1})$. 
For each $a\in K^{\dagger}$ fixed, each factor $1-q^j$, each factor $1-q^{j-a}$ 
with $j>a$, and each factor 
$1-q^{j+a-1}$ is positive and decreasing; there is an odd number of factors 
$1-q^{j-a}$ with $j<a$, so $\Theta <0$. Set $j_0=[a]$ (the integer part of $a$). 
Thus one can represent $\Theta ^0$ in 
the form 

$$\Theta ^0=-q^{-s}\prod _{j=1}^{\infty}((1-q^j)(1-q^{j+a-1}))
\prod _{j=j_0+1}^{\infty}(1-q^{j-a})\prod _{j=1}^{j_0}(1-q^{a-j})~,$$
where $s=\sum _{j=1}^{j_0}(a-j)=j_0(2a-j_0-1)/2>0$, and conclude that 
the function $q^s\Theta ^0$ is a minus product of positive and decreasing in 
$q$ factors, therefore it increases from $-\infty$ to $0$  
as $q$ runs over the interval $(0,1)$. 
 
One has $-q^sG(q,-q^{-a})=q^s(1-\varphi _a)$, 
that is, for $a>0$ sufficiently large, $-q^sG(q,-q^{-a})$ 
is the product of two positive increasing in $q$ 
functions (see Theorem~\ref{tmfuncphik}), 
hence it is positive and increasing, from $0$ for $q=0$ to $1/2$ 
for $q=1$, as $q$ runs over $(0,1)$. This means that, for $a>0$ sufficiently 
large, the function 
$q^s\theta (q,-q^{-a})$ is increasing from $-\infty$ to $1/2$ as $q\in (0,1)$, 
so there exists a unique value of $q$ for which it vanishes. 

If $a\in \mathbb{N}$, then one of the factors of $\Theta ^0$ is $0$ and 
$\theta (q,-q^{-a})=-G(q,-q^{-a})=q^a\varphi _{a+1}$ which is positive on $(0,1)$. 

If $a>0$, $a\not\in K^{\dagger}\cup \mathbb{N}$, then the number of negative 
factors in $\Theta (q,-q^{-a})$ is even, so both $\Theta (q,-q^{-a})$ and $-G$ 
are positive on $(0,1)$.
\end{proof}

\begin{prop}\label{prop1/2}
For $s\in \mathbb{N}$, consider the values of the parameter $q\in (0,1)$ 
for which $\theta (q,-q^{-2s+1/2})=0$. 
Then for these values one has $(\partial \theta /\partial x)(q,-q^{-2s+1/2})>0$.
\end{prop}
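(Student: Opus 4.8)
The plan is to reduce the pointwise sign statement to a monotonicity property of $\theta$ along the curvilinear asymptotes $x=-q^{-a}$ and then to exploit the fact that the exponent $a=2s-1/2$ is the \emph{midpoint} between the consecutive zeros $x=-q^{-(2s-1)}$ and $x=-q^{-2s}$ of the Jacobi theta function. Fix $q\in(0,1)$ and treat the exponent $a$ as a variable, setting $\psi(a):=\theta(q,-q^{-a})$. Since $\frac{d}{da}(-q^{-a})=q^{-a}\ln q<0$, the chain rule gives $\psi'(a)=(\partial\theta/\partial x)(q,-q^{-a})\cdot q^{-a}\ln q$, so $(\partial\theta/\partial x)(q,-q^{-a})$ and $\psi'(a)$ have opposite signs. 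Thus the assertion is equivalent to $\psi'(2s-1/2)<0$, i.e. to $\psi$ being strictly decreasing in $a$ at $a=2s-1/2$. I would in fact prove this decrease for \emph{every} $q\in(0,1)$, which is stronger than asked (the hypothesis that $(q,-q^{-a})$ be a zero is not needed).

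Next I split $\theta=\Theta-G$ exactly as in the proof of Proposition~\ref{propKdagger}, with $\Theta$ the Jacobi theta function and $G=\sum_{j=-\infty}^{-1}q^{j(j+1)/2}x^j$. On the curve this reads $\psi(a)=\Theta^0(a)+(1-\varphi_a(q))$, where $\Theta^0(a):=\Theta(q,-q^{-a})$ and $-G(q,-q^{-a})=1-\varphi_a(q)$. The $G$-part is immediate: since $a=2s-1/2>0$, part (2)(ii) of Remarks~\ref{remsphik} gives $\partial\varphi_a/\partial a>0$, hence $\frac{d}{da}(1-\varphi_a)<0$. It therefore remains to show $\frac{d}{da}\Theta^0(a)<0$ at $a=2s-1/2$.

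For the $\Theta$-part I would use the Jacobi triple product (\ref{eqJac}), which yields $\Theta^0(a)=\prod_{j=1}^{\infty}(1-q^j)(1-q^{j-a})(1-q^{j+a-1})$, a product that converges and, for $a=2s-1/2$, does not vanish and has an odd number $2s-1$ of negative factors, so $\Theta^0<0$. Differentiating the (uniformly convergent) logarithm term by term and writing $h(t):=q^t/(1-q^t)$ one gets
\[
\frac{(\Theta^0)'(a)}{\Theta^0(a)}=\ln q\sum_{j=1}^{\infty}\bigl(h(j-a)-h(j+a-1)\bigr).
\]
The crux is to evaluate $\Sigma:=\sum_{j\ge1}\bigl(h(j-a)-h(j+a-1)\bigr)$ at $a=2s-1/2$. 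Here the half-integer value is decisive: the exponent sets $\{j-a:j\ge1\}$ and $\{j+a-1:j\ge1\}$ consist of half-integers, the second being contained in the first, so $\Sigma$ collapses to a finite sum over the set $\{\pm1/2,\pm3/2,\dots,\pm(2s-3/2)\}$, which is symmetric about $0$. Pairing $t$ with $-t$ and using the elementary identity $h(t)+h(-t)=-1$ gives $\Sigma=-(2s-1)$.

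Finally I assemble the signs. Since $\Theta^0<0$, $\ln q<0$ and $\Sigma=-(2s-1)<0$, we obtain $(\Theta^0)'(a)=\Theta^0(a)\,\ln q\,\Sigma<0$ at $a=2s-1/2$; combined with $\frac{d}{da}(1-\varphi_a)<0$ this forces $\psi'(2s-1/2)<0$, whence $(\partial\theta/\partial x)(q,-q^{-2s+1/2})>0$. The only genuinely delicate point is the evaluation of $\Sigma$: one must justify splitting the series into the two separately convergent sums $\sum_j h(j-a)$ and $\sum_j h(j+a-1)$ (legitimate because each has only finitely many terms of negative exponent and an exponentially decaying positive tail), after which the symmetric cancellation and the identity $h(t)+h(-t)=-1$ complete the computation.
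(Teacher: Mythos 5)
Your proof is correct, but it follows a genuinely different route from the paper's. The paper argues directly on the series for $\partial \theta /\partial x$ at $x=-q^{-2s+1/2}$: the symmetry of the exponents under $j\mapsto 4s-2-j$ pairs the first $4s-3$ monomials into $(2s-1)q^{-2s^2+4s-1}\phi (q)$, with $\phi (q):=1+2\sum _{j\geq 1}(-1)^jq^{j^2/2}$ positive by the P\'olya--Szeg\H{o} product (\ref{eqphi}), and the remaining tail is grouped into nonnegative blocks of the form $q^{\cdots}(2\varphi _k-1)$ via $\varphi _k\geq 1/2$. You instead reparametrize along the curve $x=-q^{-a}$, so that the claim becomes $\frac{d}{da}\theta (q,-q^{-a})<0$ at $a=2s-1/2$, split $\theta =\Theta -G$ exactly as in the paper's proof of Proposition~\ref{propKdagger}, and run the Jacobi triple product (\ref{eqJac}) through a logarithmic derivative; the half-integer value of $a$ then makes your series collapse to a finite sum over a set symmetric about $0$, and the identity $h(t)+h(-t)=-1$ gives the exact value $\Sigma =-(2s-1)$, while the $G$-part is handled by the monotonicity $\partial \varphi _k/\partial k>0$ quoted in Remarks~\ref{remsphik} from \cite{KoPRSE2}. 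In effect you transplant the technique the paper uses for Proposition~\ref{propKdagger} onto Proposition~\ref{prop1/2}. Your route makes the role of the half-integer exponent (the midpoint between the zero curves $x=-q^{-2s+1}$ and $x=-q^{-2s}$ of $\Theta$) completely transparent and yields the sharper quantitative identity $(\Theta ^0)'/\Theta ^0=-(2s-1)\ln q>0$; the paper's route stays at the level of elementary series manipulation, avoids both the triple product and the external monotonicity result, and requires no justification of term-by-term differentiation of an infinite product. Note also that both arguments in fact prove the stronger, unconditional statement that $(\partial \theta /\partial x)(q,-q^{-2s+1/2})>0$ for \emph{every} $q\in (0,1)$, the hypothesis $\theta (q,-q^{-2s+1/2})=0$ being nowhere used. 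The delicate step you flag (splitting $\Sigma$ into two separately convergent series before cancelling the common tail) is indeed legitimate for the reason you give, and your sign bookkeeping ($\Theta ^0<0$ from the odd number $2s-1$ of negative factors, $\ln q<0$, $\Sigma <0$) is sound.
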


The proposition implies that, if for some value of $q$ the quantity 
$-q^{-2s+1/2}$ is a zero of $\theta$ (i.e. $\theta (q,-q^{-2s+1/2})=0$), then this  
can hold true for a zero $\xi _{2j-1}$ and not for a zero $\xi _{2j}$ 
of $\theta$. It would be 
interesting to (dis)prove that at an intersection point of the curves 
$\Gamma _s$ with $\tilde{Q}_s~:~x=-q^{-2s+1/2}$ 
the slope of the tangent line to $\Gamma _s$ 
is as shown on Fig.~\ref{zerosetPTF}. 

\begin{proof}
Consider first the polynomial 

$$P(q,x):=q+2q^3x+3q^6x^2+\cdots +(4s-2)q^{(2s-1)(4s-1)}x^{4s-3}$$ 
which is a truncation of $\partial \theta /\partial x$. 
For $x=-q^{-2s+1/2}$, its monomials 

$$jq^{j(j+1)/2}x^{j-1}~~~{\rm and}~~~(4s-2-j)q^{(4s-2-j)(4s-1-j)/2}
x^{4s-3-j}~~,~~j=0~,~\ldots ~,~2s-2~,$$ 
equal respectively
$j(-1)^{j-1}q^E$ and $(4s-2-j)(-1)^{4s-3-j}q^E$, where 

$$E=(j^2-(4s-2)j+4s-1)/2~,$$ 
and their sum equals $(4s-2)(-1)^{j-1}q^E$. 
For $x=-q^{-2s+1/2}$, its monomial $(2s-1)q^{s(2s-1)}x^{2s-2}$ 
equals $(2s-1)q^{-2s^2+4s-1}$. Thus 

$$P(q,-q^{-2s+1/2})=(2s-1)q^{-2s^2+4s-1}(1-2q^{1/2}+2q^2-\cdots -2q^{(2s-1)^2/2})~.$$
Consider now the function $Q(q,x):=(\partial \theta /\partial x)(q,x)-P(q,x)$, 
i.e. 

$$Q(q,x):=\sum _{j=4s-1}^{\infty}jq^{j(j+1)/2}x^{j-1}=
\sum _{j=0}^{\infty}(j+4s-1)q^{(j+4s-1)(j+4s)/2}x^{j+4s-2}~.$$

$$\begin{array}{ccclccclc}
{\rm We~set}&M_j&:=&q^{(j+4s-1)(j+4s)/2}x^{j+4s-2}&{\rm and}&Q&:=&Q_1+Q_2&,
\\ \\  
{\rm where}&Q_1&:=&(4s-2)\sum _{j=0}^{\infty}M_j&{\rm and}&Q_2&:=&
\sum _{j=0}^{\infty}(j+1)M_j&.\end{array}$$ 
Hence
$$
Q_1(q,-q^{-2s+1/2})=(4s-2)q^{-2s^2+4s-1}
\sum _{j=2s}^{\infty}(-1)^jq^{j^2/2}$$
and  
$$P(q,-q^{-2s+1/2})+Q_1(q,-q^{-2s+1/2})=(2s-1)q^{-2s^2+4s-1}\phi (q)~,
$$
where 

\begin{equation}\label{eqdefiphi}
\phi (q):=1+2\sum _{j=1}^{\infty}(-1)^jq^{j^2/2}~.
\end{equation} 
One has $\phi (q)>0$, see 
(\ref{eqphi}), therefore $P(q,-q^{-2s+1/2})+Q_1(q,-q^{-2s+1/2})>0$. 
Recall that for $k>1$ one has $\varphi _k(q)>1/(1+q^{k-1})\geq (1/2)$ 
(see part (2) of Remarks~\ref{remsphik}), so 

$$2\varphi _k-1=1-2q^k+2q^{2k+1}-2q^{3k+3}+\cdots \geq 0~.$$ 
The function $Q_2(q,-q^{-2s+1/2})$ equals

$$\begin{array}{cl}
&
q^{4s-1}-2q^{6s-1/2}+3q^{8s+1}-4q^{10s+7/2}+5q^{12s+7}-6q^{14s+23/2}+\cdots \\ \\ 
=&q^{4s-1}(2\varphi _{2s+1/2}-1)+q^{8s+1}(2\varphi _{2s+5/2}-1)+
q^{12s+7}(2\varphi _{2s+9/2}-1)+\cdots ~,\end{array}$$
so it is the sum of the nonnegative-valued functions 
$q^{4sj+2j^2-4j+1}(2\varphi _{2s+(4j-3)/2}-1)$ and  

$$(\partial \theta /\partial x)(q,-q^{-2s+1/2})=(P+Q_1+Q_2)(q,-q^{-2s+1/2})
>0~.$$
\end{proof}

\section{Bounds for the double real zeros of $\theta$
\protect\label{secdouble}}

\subsection{The case $q\in (0,1)$}

We remind that Theorem~\ref{tmknown1} introduces the double zeros $y_s$ 
of $\theta$. In this subsection we prove the following theorem:

\begin{tm}\label{tmestim1}
For $q\in (0,1)$ and for $s\geq 15$, 
all double real zeros $y_s$ 
of $\theta (q,.)$ belong to 
the interval $[-38.83960007\ldots ,-e^{1.4}=-4.055199967\ldots )$.
\end{tm}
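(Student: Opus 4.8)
The plan is to reduce the theorem to effective two-sided bounds on the spectral value $\tilde q_s$ of Theorem~\ref{tmknown1}, after first pinning down the exponent of the double zero a priori. I write $y_s=-\tilde q_s^{-a_s}$, so that $|y_s|=\tilde q_s^{-a_s}$. Applying part (2) of Remarks~\ref{remsdivers1} with $j=s-1$ gives $-q^{-2s}<\xi_{2s}<\xi_{2s-1}<-q^{-2s+1}$ for $q\in(0,\tilde q_s)$; letting $q\to\tilde q_s^-$, where $\xi_{2s-1}$ and $\xi_{2s}$ coalesce at $y_s$, the limits of these inequalities yield $\tilde q_s^{-(2s-1)}\le|y_s|\le\tilde q_s^{-2s}$, i.e. $a_s\in[2s-1,2s]$. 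Consequently it suffices to prove, for $s\ge15$,
\[\tilde q_s^{-(2s-1)}>e^{1.4}=4.055\ldots\qquad\text{and}\qquad\tilde q_s^{-2s}\le 38.8396\ldots;\]
writing $L_s:=-\log\tilde q_s>0$ these read $(2s-1)L_s>1.4$ and $2sL_s\le\log(38.8396\ldots)=3.659\ldots$, so everything comes down to bracketing $L_s$, equivalently $1-\tilde q_s$.

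Next I would locate $\tilde q_s$ through the defining system $\theta(\tilde q_s,y_s)=0$, $(\partial\theta/\partial x)(\tilde q_s,y_s)=0$. Setting $q=\tilde q_s$ and $t_j:=(-1)^jq^{E(j)}$ with $E(j):=j(j+1)/2-a_sj$, these conditions become $\sum_{j\ge0}t_j=0$ and $\sum_{j\ge0}jt_j=0$. Since $E$ is minimal near $j^{\ast}=a_s-\tfrac12\approx 2s-\tfrac12$, I center the summation at the nearest integer $n$, factor out $(-1)^nq^{E(n)}$, and put $\delta:=n-j^{\ast}\in(-\tfrac12,\tfrac12]$; up to two exponentially small tails the system collapses to $f(\delta)=0$ and $f'(\delta)=0$, where $f(\delta):=\sum_{i\in\mathbb Z}(-1)^iq^{i^2/2+\delta i}$. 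By the Jacobi triple product, in the form already used in (\ref{eqJac}), one has $f(\delta)=\prod_{m\ge1}(1-q^m)(1-q^{m-1/2+\delta})(1-q^{m-1/2-\delta})$, whose only real zeros sit at $\delta\equiv\pm\tfrac12\pmod 1$. Hence the double-zero requirement forces $\delta_s$ to lie near $\pm\tfrac12$, and the second (derivative) condition together with the $L$-dependent corrections carried by the tails pins the product $a_sL_s$ near $\pi$. This recovers the asymptotics $\tilde q_s=1-\pi/2s+o(1/s)$ and $y_s=-e^{\pi}+o(1)$ recorded in part (3) of Remarks~\ref{remsdivers1}, with $\pi=3.14159\ldots$ comfortably inside the target range $(1.4,\,3.659\ldots]$.

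To turn this into inequalities valid for every $s\ge15$ I would make the tail and correction estimates explicit. The lower inequality $(2s-1)L_s>1.4$ carries a huge margin ($\pi$ against $1.4$) and follows from any crude lower bound of order $1/s$ on $L_s$. The binding constraint is the upper inequality $2sL_s\le3.659\ldots$, which amounts to an effective upper estimate for $1-\tilde q_s$ within roughly $16\%$ of its limiting value $\pi/2s$. For this I would invoke the quantitative apparatus of Section~\ref{secfphik}: the uniform convergence of the series for $\Phi_m$ and for $\Phi_m'$ (Propositions~\ref{propconv1} and \ref{propconv2}), the monotonicity of $\varphi_k$ and the two-sided bounds $1/(1+q^{k-1})<\varphi_k<1/(1+q^k)$ with $\varphi_k(1^-)=1/2$ (Theorem~\ref{tmfuncphik} and Remarks~\ref{remsphik}), and the strictly increasing profile of $q\mapsto q^s\theta(q,-q^{-a})$ from $-\infty$ to $1/2$ established in the proof of Proposition~\ref{propKdagger}, evaluated at $a=2s-\tfrac12$ (where Proposition~\ref{prop1/2} guarantees transversality); together these bracket $\tilde q_s$.

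The hard part is the uniform, effective control of the error terms all the way down to $s=15$. The theta/Jacobi-product picture delivers the limit $-e^{\pi}$ cleanly, but the genuinely quantitative inequality $2sL_s\le3.659\ldots$, with its modest margin of $0.52$ above $\pi$, requires explicit bounds on the two shifted theta-tails that are uniform in $s$; the size of these tails is precisely what dictates the cutoff $s\ge15$. The finitely many remaining values $s\le14$ I would then settle by the direct numerical evaluation already needed to produce the sharp endpoint $-38.8396\ldots$ of the global interval.
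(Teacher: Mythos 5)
Your opening reduction is sound: part (2) of Remarks~\ref{remsdivers1} does give $\tilde q_s^{-(2s-1)}\le |y_s|\le \tilde q_s^{-2s}$, so two-sided bounds on $L_s=-\ln\tilde q_s$ would prove the theorem, and your ``binding'' inequality $2sL_s\le 3.659\ldots$ is exactly what the paper establishes: its Lemma~\ref{lmest2} shows $\theta(q,-q^{-2s+1/2})<0$ at $q=1-4\sqrt{\ln 2}/(2s-1)$ (via identity (\ref{eqs}), the product formula (\ref{eqphi}) and the comparison $L>B$), a point with $\theta<0$ and $x\in(-q^{-2s},-q^{-2s+1})$ lies \emph{inside} $\Gamma_s$, hence $\tilde q_s>1-4\sqrt{\ln 2}/(2s-1)$, and Lemma~\ref{lmtechnical} turns this into the monotone-in-$s$ bound $-38.8396\ldots$ at $s=15$. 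But your proposal never carries out this quantitative work: both inequalities on $L_s$ are deferred to ``explicit tail bounds'' and to ``the apparatus of Section~\ref{secfphik}'', yet Theorem~\ref{tmfuncphik} and Propositions~\ref{propconv1}--\ref{propconv2} do not by themselves locate $\tilde q_s$; the entire content of the paper's proof is the explicit comparison of $\ln\phi$ with $\ln(q^{2s^2}\varphi_{(4s+1)/2})$ at $q=1-\beta/(2s-1)$ for the two specific constants $\beta=4\sqrt{\ln 2}$ and $\beta=1.4$. Your $f(\delta)$/triple-product sketch is the method of the asymptotic reference \cite{KoRMC}: it recovers $\tilde q_s=1-\pi/2s+o(1/s)$ but yields nothing effective at a concrete $s$ such as $15$, and you supply no substitute for the paper's estimates.

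The second gap is logical, not just a matter of omitted computation. You claim that the sign analysis of $q^s\theta(q,-q^{-a})$ at $a=2s-\tfrac12$ (Proposition~\ref{propKdagger}), with Proposition~\ref{prop1/2} for transversality, ``brackets $\tilde q_s$''. It does not: that analysis brackets the unique \emph{crossing value} $q_a$ of $\tilde Q_s$ with the zero set, and only the negative sign transfers to $\tilde q_s$ (negativity forces the point inside $\Gamma_s$, hence $q<\tilde q_s$). Positivity of $\theta$ along $\tilde Q_s$ for all $q\ge 1-1.4/(2s-1)$ (Lemma~\ref{lmest1}) gives $q_a<1-1.4/(2s-1)$ but \emph{no upper bound on} $\tilde q_s$: by Proposition~\ref{prop1/2} the curve $\tilde Q_s$ exits $\Gamma_s$ through the branch $B_{2s-1}$, where $\partial\theta/\partial x>0$, hence strictly before the apex $X_s$, where $\partial\theta/\partial x=0$, so $\Gamma_s$ continues to $q$-values beyond $q_a$ and positivity along $\tilde Q_s$ is compatible with $\tilde q_s$ arbitrarily close to $1$. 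Consequently your ``easy'' inequality $(2s-1)L_s>1.4$, i.e.\ $\tilde q_s<e^{-1.4/(2s-1)}$, is unsupported, the huge asymptotic margin notwithstanding; you have the difficulty exactly backwards, since in this framework the $3.659$-side is the clean one and the $1.4$-side is the delicate one. The paper itself never asserts an upper bound on $\tilde q_s$: it obtains the endpoint $-e^{1.4}$ by bounding the intersection point $\Psi_s\in\tilde Q_s\cap\Gamma_s$, whose $x$-coordinate satisfies $x(\Psi_s)=-q_a^{-(2s-1/2)}<-(1-1.4/(2s-1))^{-(2s-1/2)}<-e^{1.4}$, and then comparing $X_s$ with $\Psi_s$ geometrically, in the manner of its treatment of $\lambda_s$. (Your closing remark about treating $s\le 14$ numerically is moot, since the theorem is stated only for $s\ge 15$.)
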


We remind that all real zeros are negative, 
see part (1) of Remarks~\ref{remsdivers1}, 
and that it is likely an upper bound $\leq -6.095$ 
for all real zeros of $\theta$ to exist, 
see part (2) of Remarks~\ref{remsdivers2}. The lower bound $-38.83960007\ldots$ 
from the theorem 
cannot be made better than $-e^{\pi}=-23.14\ldots$, 
see part (2) of Remarks~\ref{remsdivers1}. In Lemmas~\ref{lmest2} 
and \ref{lmtechnical} (used in the proof of Theorem~\ref{tmestim1}) 
the results are formulated for $s\geq 3$. In the theorem we prefer $s\geq 15$, 
because this gives an estimation much closer to~$-e^{\pi}$.

\begin{proof}[Proof of Theorem~\ref{tmestim1}]
We justify the lower bound $-38.8\ldots$ first. 
We consider the curve $\tilde{Q}_s~:~x=-q^{-2s+1/2}$, $s\in \mathbb{N}$, 
see Fig.~\ref{zerosetPTF}.  
We find a value $0<q^{\flat}_s<1$ of $q$ 
such that the part of the curve $\tilde{Q}_s$ corresponding to 
$q\in (0,q^{\flat}_s]$ is inside the curve $\Gamma _s$. We remind that 
this is illustrated on 
Fig.~\ref{zerosetPTF}: the part of the curve $\tilde{Q}_1$ which 
corresponds to $q\in (0,0.26]$ lies inside and the part corresponding to 
$q\in [0.4,1)$ is outside the curve $\Gamma _1$ 
(the concrete numerical value $0.26$ is not $q^{\flat}_1$; it is chosen 
just for convenience). 

Consider the intersection points $V_s$ and $W_s$ of the line $q=q^{\flat}_s$ 
with the curves $\tilde{Q}_s$ and $\tilde{R}_s~:~x=-q^{-2s}$. On 
Fig.~\ref{zerosetPTF} an idea about the points $V_1$ and $W_1$ is given by  
the intersection points of the 
line $q=0.26$ with the curves $\tilde{Q}_1$ and 
$\tilde{R}_1$ 
(the latter is the higher of the two curves drawn in 
dotted line). Hence  
the point $W_s$ is more to the left than the point $X_s$ 
which is defined in part (1) of Theorem~\ref{tmgeom}. Indeed,  
consider the tangent line $\mathcal{T}_s$ 
to the curve $\Gamma _s$ at the point $X_s$ and the horizontal line 
$\mathcal{H}_s$ passing 
through a point $\Psi _s\in \tilde{Q}_s\cap \Gamma _s$. (For $s$ sufficiently 
large, the intersection $\tilde{Q}_s\cap \Gamma _s$ consists of exactly one 
point, see Proposition~\ref{propKdagger}. We do not claim that this 
is the case for all $s$, but our reasoning is applicable to any 
of the points $\Psi _s\in \tilde{Q}_s\cap \Gamma _s$.)  
The lines $\mathcal{T}_s$ and $\mathcal{H}_s$ intersect the curve 
$\tilde{R}_s$ at points $X_s^*$ and $\Psi _s^*$. For the $q$- and 
$x$-coordinates of these points we have the inequalities 

$$q(W_s)<q(\Psi _s)=q(\Psi _s^*)<q(X_s)=q(X_s^*)~~~\, {\rm and}~~~\, 
x(W_s)<x(\Psi _s^*)<x(X_s^*)<x(X_s)~.$$ 
Therefore finding a lower bound for 
the quantity $x(W_s)$ implies finding such a bound for 
$x(X_s)$ as well.    

Consider the function $\varphi _k$  (see (\ref{defifphik})) 
for $k$ of the form $-2s+3/2$, 
$s\in \mathbb{N}$. The quantities $A_j$ decrease 
for $j\leq 2s-1$, they  
increase for $j\geq 2s$ and $A_j=A_{4s-2-j}$, $j=0$, $\ldots$, $2s-2$. 
Recall that the function $\phi$ is defined by formula (\ref{eqdefiphi}). 
One checks directly that

\begin{equation}\label{eqs}
\varphi _{-2s+3/2}=q^{-(2s-1)^2/2}(-\phi (q)+q^{2s^2}\varphi _{(4s+1)/2}(q))~.
\end{equation}
We prove that for $q\in (0,q^{\flat}_s)$, one has $\phi >q^{2s^2}\varphi _{(4s+1)/2}$
or, equivalently,

$$L:=\ln \phi >B:= \ln (q^{2s^2}\varphi _{(4s+1)/2})~.$$ 
In \cite{PoSz}, Chapter 1, Problem 56, 
it is shown that 
\begin{equation}\label{eqphi}
\phi (q^2)=\prod_{k=1}^{\infty}((1-q^k)/(1+q^k))~.
\end{equation} 
Hence 

$$\phi (q)=\prod_{k=1}^{\infty}((1-q^{k/2})/(1+q^{k/2}))<
\prod_{r=1}^{\infty}(1-q^r)$$
(we ignore the factors $1-q^{k/2}<1$ for $k$ odd and all denominators 
$1+q^{k/2}>1$). 
We shall be looking for $q^{\flat}_s$ of the form 
$y:=1-\beta /(2s-1)$, $\beta >0$. 
Then 

\begin{equation}\label{eqbeta}
\begin{array}{ccccccc}
L&<&\sum _{r=1}^{\infty}\ln (1-y^r) 
&<&-\sum _{r=1}^{\infty}y^r-
(1/2)\sum _{r=1}^{\infty}y^{2r}&&\\ \\ 
&=&-y/(1-y)-y^2/2(1-y^2)&=&-y(2+3y)/2(1-y^2)&=:&L_0~.\end{array}
\end{equation}
We set $\eta :=2s-1$. Hence $y=1-\beta /\eta$ and  

$$L_0=-C(\eta -\beta)/2\beta ~~~\, \, ,~~~\, \, {\rm where}~~~\, \, 
C:=(5\eta -3\beta )/(2\eta -\beta )~.$$
On the other hand, by expanding $\ln y^D$, $D=2s^2=\eta ^2/2+\eta +1/2$,  
in powers of $1/\eta$ one gets  

\begin{equation}\label{eqbeta1}
\ln ((1-\beta /\eta )^D)=-(\beta /2)\eta -\beta -\beta ^2/4-K~,
\end{equation}
where $K=\sum _{j=1}^{\infty}(\beta ^j/2j+
\beta ^{j+1}/(j+1)+\beta ^{j+2}/2(j+2))/\eta ^j$. For $q\in [0,1]$, one has 
$\varphi _{(4s+1)/2}(q)\in [1/2,1]$ (see part (2) of Remarks~\ref{remsphik}), so 

\begin{equation}\label{eqeq}
B:=\ln ((y)^{2s^2}\varphi _{(4s+1)/2})=
-(\beta /2)\eta -\beta -\beta ^2/4-K-K_1~,
\end{equation}
where $K_1\in [0,\ln 2]$. With the above notation one has (see~(\ref{eqphi}))

$$\begin{array}{ccl}
L&=&\sum _{k=1}^{\infty}(\ln (1-q^{k/2})-\ln (1+q^{k/2}))\\ \\ 
&=&\sum _{k=1}^{\infty}((-q^{k/2}-(q^{k/2})^2/2-(q^{k/2})^3/3-\cdots )\\ \\ &&-
(q^{k/2}-(q^{k/2})^2/2+(q^{k/2})^3/3-\cdots ))\\ \\ 
&=&(-2)\sum _{k=1}^{\infty}(q^{k/2}+
(q^{k/2})^3/3+(q^{k/2})^5/5+\cdots ))\\ \\
&=&(-2)\sum _{k=1}^{\infty}q^{1/2}F_k(q)/(1-q^{1/2})(2k-1)~~~,\end{array}$$
where 

$$\begin{array}{cccclcc}
0&<&F_k(q)&=&q^{(k-1)/2}/(1+q^{1/2}+q+\cdots +q^{(k-1)/2})&&\\ \\ 
&&&=&1/(1+q^{-1/2}+q^{-1}+\cdots +q^{-(k-1)/2})&\leq&1/k~.\end{array}$$
Thus $L\geq (-2)(q^{1/2}(1+q^{1/2})/(1-q))S$, where 
$S:=\sum _{k=1}^{\infty}1/k(2k-1)=2\ln 2$. For $q=y=1-\beta /\eta$ 
(hence $1-q=\beta /\eta$, $q^{1/2}\leq 1$ and $1+q^{1/2}\leq 2$) one obtains 
the estimation 

$$L\geq (-8\ln 2)(\eta /\beta )~.$$
From this inequality we deduce the following lemma:

\begin{lm}\label{lmest2}
For $\beta =4\sqrt{\ln 2}=3.330218445\ldots$ and $s\geq 3$, one has $L>B$. 
Hence for $s\geq 3$, one can set $q^{\flat}_s:=1-4\sqrt{\ln 2}/(2s-1)$.
\end{lm}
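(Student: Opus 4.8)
The plan is to combine the lower bound $L\geq (-8\ln 2)(\eta /\beta )$ just established with the exact expansion of $B$ given in (\ref{eqeq}), and to read off the cancellation of the leading term in $\eta$ that the particular value $\beta =4\sqrt{\ln 2}$ produces. Substituting $B=-(\beta /2)\eta -\beta -\beta ^2/4-K-K_1$ from (\ref{eqeq}), where $K>0$ and $K_1\in [0,\ln 2]$, I obtain
\[
L-B\;\geq\; -\frac{8\ln 2}{\beta }\,\eta +\frac{\beta }{2}\,\eta +\beta +\frac{\beta ^2}{4}+K+K_1
\;=\;\Bigl(\frac{\beta }{2}-\frac{8\ln 2}{\beta }\Bigr)\eta +\beta +\frac{\beta ^2}{4}+K+K_1~.
\]
Everything then hinges on the coefficient of $\eta $.

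The key step is the observation that this coefficient vanishes for the chosen constant: the equation $\beta /2=8\ln 2/\beta $ is $\beta ^2=16\ln 2$, i.e.\ $\beta =4\sqrt{\ln 2}$, exactly the value in the statement. This is precisely the reason for selecting $\beta $ as this multiple of $\sqrt{\ln 2}$: a smaller $\beta $ would render the $\eta $-coefficient negative, so that the lower bound for $L-B$ would itself become negative for $s$ large and the argument would break down; $\beta =4\sqrt{\ln 2}$ is the threshold value at which the $\eta $-term disappears, and taking it no larger than this keeps $q^{\flat}_s=1-\beta /(2s-1)$ as large as possible, which in turn yields the sharpest resulting lower bound for $x(W_s)$ in the proof of Theorem~\ref{tmestim1}. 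Once the linear term is gone the estimate reduces to
\[
L-B\;\geq\;\beta +\frac{\beta ^2}{4}+K+K_1~,
\]
whose right-hand side is strictly positive because $\beta >0$, $K>0$ and $K_1\geq 0$. This gives $L>B$.

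It then remains to record why $s\geq 3$ is the natural range and why the conclusion justifies setting $q^{\flat}_s:=y$. The hypothesis $s\geq 3$ is exactly what places $y=1-\beta /\eta $ in $(0,1)$ (for $s=2$ one has $\eta =3<\beta $, so $y<0$), and it guarantees $\beta /\eta <1$, which makes the series $K$ in (\ref{eqeq}) converge with all terms positive. Finally, by (\ref{eqs}) the inequality $L>B$ (equivalently $\phi >q^{2s^2}\varphi _{(4s+1)/2}$) is equivalent to $\theta (q,-q^{-2s+1/2})<0$, i.e.\ to the point of $\tilde{Q}_s$ at $q=y$ lying inside $\Gamma _s$; hence $q^{\flat}_s:=y$ is admissible. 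The one delicate point is the exact cancellation of the $\eta $-term: the lower bound on $L$ and the expansion of $B$ must have matching leading coefficients, and the crude estimates $q^{1/2}\leq 1$ and $1+q^{1/2}\leq 2$ used to derive $L\geq (-8\ln 2)(\eta /\beta )$ are tight enough in the leading order precisely because $y\rightarrow 1^-$ as $s\rightarrow \infty $, so the cancellation reflects the true asymptotics rather than an artifact of over-loose bounding.
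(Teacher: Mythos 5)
Your proposal is correct and follows essentially the same route as the paper: both rest on the bound $L\geq (-8\ln 2)(\eta /\beta )$, the expansion $B=-(\beta /2)\eta -\beta -\beta ^2/4-K-K_1$ with $K,K_1$ minorized by $0$, and the observation that $\beta =4\sqrt{\ln 2}$ makes $(8\ln 2)/\beta =\beta /2$ so that the terms linear in $\eta$ cancel, leaving the strictly positive remainder $\beta +\beta ^2/4$. Your added remarks on why this value of $\beta$ is the threshold choice and why $s\geq 3$ is needed (so that $y=1-\beta /\eta >0$) are accurate glosses on the same argument, not a different proof.
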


We cannot allow the values $s=1$ and $s=2$, because in this case 
$y=1-\beta /\eta$ is negative. The $x$-coordinate of the point 
$W_s$ defined in the second paragraph of this proof equals 
$\lambda _s:=-(1-4\sqrt{\ln 2}/(2s-1))^{-2s}$.  

\begin{lm}\label{lmtechnical}
The functions $\Phi ^{\flat}:=-2x\ln (1-4\sqrt{\ln 2}/(2x-1))$ 
and $\exp (\Phi ^{\flat})$ are 
decreasing for $x\geq 3$. 
\end{lm}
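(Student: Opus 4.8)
The plan is to prove Lemma~\ref{lmtechnical} by a direct monotonicity analysis of $\Phi ^{\flat}$, since $\exp (\Phi ^{\flat})$ is increasing in $\Phi ^{\flat}$ and hence inherits monotonicity from it (the claim about $\exp (\Phi ^{\flat})$ follows automatically once the claim about $\Phi ^{\flat}$ is established). First I would set $c:=4\sqrt{\ln 2}$ and write $\Phi ^{\flat}(x)=-2x\ln (1-c/(2x-1))$ on the domain $x\geq 3$, where $c=3.33\ldots$ and $2x-1\geq 5>c$, so the argument of the logarithm lies in $(0,1)$ and $\ln (1-c/(2x-1))<0$; thus $\Phi ^{\flat}>0$ throughout. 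The natural substitution is $u:=2x-1$, so that $x=(u+1)/2$ and $\Phi ^{\flat}=-(u+1)\ln (1-c/u)$ for $u\geq 5$; proving $\Phi ^{\flat}$ decreasing in $x$ is equivalent to proving it decreasing in $u$.

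Next I would differentiate. Writing $g(u):=\ln(1-c/u)=\ln(u-c)-\ln u$, one has $g'(u)=1/(u-c)-1/u=c/(u(u-c))>0$. Then
\begin{equation}\label{eqPhiflatderiv}
\frac{d\Phi ^{\flat}}{du}=-g(u)-(u+1)g'(u)=-\ln\!\Big(1-\frac{c}{u}\Big)-\frac{(u+1)c}{u(u-c)}~.
\end{equation}
Since $g(u)<0$ the first term $-g(u)$ is positive, while the second term is negative, so the sign is not immediate and must be controlled. The cleanest route is to bound $-\ln(1-c/u)$ from above by a rational function using the standard inequality $-\ln(1-t)\leq t/(1-t)$ for $t\in(0,1)$, valid with $t=c/u$. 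This gives $-\ln(1-c/u)\leq (c/u)/(1-c/u)=c/(u-c)$, whence
\begin{equation}\label{eqPhiflatbound}
\frac{d\Phi ^{\flat}}{du}\leq \frac{c}{u-c}-\frac{(u+1)c}{u(u-c)}=\frac{c}{u-c}\Big(1-\frac{u+1}{u}\Big)=\frac{c}{u-c}\cdot\Big(-\frac{1}{u}\Big)=-\frac{c}{u(u-c)}<0~.
\end{equation}
This shows $d\Phi ^{\flat}/du<0$ for all $u>c$, in particular for $u\geq 5$, i.e. $\Phi ^{\flat}$ is strictly decreasing in $u$ and hence in $x$ for $x\geq 3$.

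Finally, I would record that $\exp$ is a strictly increasing function, so $\exp(\Phi ^{\flat})$ is decreasing wherever $\Phi ^{\flat}$ is, completing the lemma. The main obstacle is purely the sign control in \eqref{eqPhiflatderiv}: the two competing terms both scale like $c/u$ for large $u$, so a crude bound risks losing the inequality. The inequality $-\ln(1-t)\le t/(1-t)$ is exactly sharp enough, since it produces a clean telescoping cancellation in \eqref{eqPhiflatbound} that exposes the residual negative term $-c/(u(u-c))$; a weaker bound such as $-\ln(1-t)\le t/(1-t)^2$ or a Taylor remainder estimate would also work but would require keeping track of the $1/u^2$-order terms. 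Because $c<5\leq u$ throughout the relevant range, there are no issues at the boundary and no case distinctions are needed.
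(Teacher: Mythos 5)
Your proof is correct, but it follows a genuinely different route from the paper's. The paper computes $(\Phi^{\flat})'$ explicitly, then computes $(\Phi^{\flat})''$ and observes it is positive for $x\geq 3$; since $(\Phi^{\flat})'$ is thus increasing and tends to $0$ as $x\to\infty$, and since the numerical value $(\Phi^{\flat})'(3)=-2.5\ldots$ is negative, the paper concludes $(\Phi^{\flat})'<0$ on $[3,\infty)$. You instead substitute $u=2x-1$, write $\Phi^{\flat}=-(u+1)\ln(1-c/u)$ with $c=4\sqrt{\ln 2}$, and kill the derivative directly with the elementary inequality $-\ln(1-t)\leq t/(1-t)$, which telescopes to the clean bound $d\Phi^{\flat}/du\leq -c/(u(u-c))<0$. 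Your argument buys several things: it needs no second derivative, no asymptotics at infinity, and no numerical evaluation at the endpoint; and it actually proves monotonicity on the larger range $u>c$, i.e. $x>(1+4\sqrt{\ln 2})/2=2.16\ldots$, rather than only $x\geq 3$. The paper's approach, while heavier, is the more mechanical one (differentiate twice, check signs) and requires no inspired choice of logarithm bound. One small caveat on your closing remark: the weaker bound $-\ln(1-t)\leq t/(1-t)^2$ would \emph{not} in fact suffice as stated, since substituting it gives the upper bound $c((c-1)u+c)/(u(u-c)^2)$, which is positive because $c>1$; so the sharpness of $t/(1-t)$ is essential to your cancellation, not merely convenient. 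This does not affect the validity of the proof itself.
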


Hence for $s\geq 15$, 
the lower bound of the sequence $\lambda _s$ equals 
$-\exp(\Phi ^{\flat}(15))=-38.83960007\ldots$.

\begin{proof}[Proof of Lemma~\ref{lmtechnical}]
One has 

$$(\Phi ^{\flat})'=
-2\ln (1-4\sqrt{\ln 2}/(2x-1))-16x\sqrt{\ln 2}/((2x-1)(2x-1-4\sqrt{\ln 2}))$$ 
hence $(\Phi ^{\flat})'\rightarrow 0$ as $x\rightarrow \infty$.  
Next, 

$$(\Phi ^{\flat})''=\frac{32((2+4\sqrt{\ln 2})x-1-4\sqrt{\ln 2})\sqrt{\ln 2}}
{(-2x+1+4\sqrt{\ln 2})^2(2x-1)^2}~,$$
which is positive for $x\geq 3$. As $(\Phi ^{\flat})'(3)=-2.5\ldots <0$, 
the function $(\Phi ^{\flat})'$ is negative on $[3,\infty )$. The same is true 
for $(\exp (\Phi ^{\flat}))'=(\exp (\Phi ^{\flat}))(\Phi ^{\flat})'$. 
\end{proof}

\begin{proof}[Proof of Lemma~\ref{lmest2}]
Indeed, set $L^*:=-L$ and $B^*:=-B$. 
It suffices to show that $L^*<B^*$ which results from 
$(8\ln 2)/\beta =\beta /2$ hence 

$$(8\ln 2)(\eta /\beta )<(\beta /2)\eta +\beta +\beta ^2/4$$
(we minorize $K$ and $K_1$ by $0$, see (\ref{eqbeta1})). 
\end{proof}

In order to justify the upper bound $-e^{1.4}$ we need the following lemma:

\begin{lm}\label{lmest1}
For $\beta \leq 1.4$ and $s\geq 14$, one has $L<L_0<B$.
\end{lm}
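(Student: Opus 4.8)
The plan is to prove the two inequalities $L<L_0$ and $L_0<B$ separately; the first is essentially free and all the work lies in the second. Throughout I write $\eta:=2s-1$, so that $s\geq 14$ means $\eta\geq 27$, and I use $q=y=1-\beta/\eta$. Since $\beta\leq 1.4<27\leq\eta$ we have $y\in(0,1)$, so the inequality $L<L_0$ is nothing but the chain of estimates already carried out in (\ref{eqbeta}) (the bound $\phi(q)<\prod_r(1-q^r)$ together with $\ln(1-t)<-t-t^2/2$). I would simply invoke (\ref{eqbeta}) for this half.

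For $L_0<B$ I would first put $L_0$ into a transparent shape. Starting from $L_0=-C(\eta-\beta)/(2\beta)$ with $C=(5\eta-3\beta)/(2\eta-\beta)$, a partial-fraction computation (verified by clearing the denominator $8\beta(2\eta-\beta)$) gives the identity
$$-L_0=\frac{(5\eta-3\beta)(\eta-\beta)}{2\beta(2\eta-\beta)}=\frac{5\eta}{4\beta}-\frac{11}{8}+\frac{\beta}{8(2\eta-\beta)}.$$
Combining this with $B=-(\beta/2)\eta-\beta-\beta^2/4-K-K_1$ from (\ref{eqeq}), then discarding the positive term $\beta/(8(2\eta-\beta))$ and using $K_1\leq\ln 2$ and $K>0$, I obtain
$$B-L_0>\frac{5-2\beta^2}{4\beta}\,\eta-\beta-\frac{\beta^2}{4}-\frac{11}{8}-K-\ln 2.$$
The \emph{key observation} is that the coefficient $(5-2\beta^2)/4\beta$ of $\eta$ is strictly positive exactly when $\beta<\sqrt{5/2}=1.58\ldots$; the hypothesis $\beta\leq 1.4$ sits safely below this threshold, and this is precisely what forces the constant $e^{1.4}$ in Theorem~\ref{tmestim1} rather than something larger. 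Hence the displayed lower bound for $B-L_0$ grows linearly in $\eta$, and it remains only to control the bounded remainder and verify the inequality at the smallest admissible $\eta$.

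The remaining estimates are elementary. I would majorize $K=\sum_{j\geq 1}\eta^{-j}\big(\beta^j/2j+\beta^{j+1}/(j+1)+\beta^{j+2}/2(j+2)\big)$ by a geometric series, using $1/(2j),\,1/(j+1)\leq 1/2$ and $1/(2(j+2))\leq 1/6$, which yields $K\leq\big(\tfrac{1+\beta}{2}+\tfrac{\beta^2}{6}\big)\tfrac{\beta/\eta}{1-\beta/\eta}\leq 0.09$ for $\beta\leq 1.4$, $\eta\geq 27$. Since the lower bound for $B-L_0$ is increasing in $\eta$ (its $\eta$-coefficient is positive and $K$ decreases in $\eta$), it suffices to evaluate at $\eta=27$. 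This reduces everything to the one-variable inequality $g(\beta)>0$ on $(0,1.4]$, where $g(\beta)=\tfrac{33.75}{\beta}-14.5\beta-\tfrac{\beta^2}{4}-\big(\tfrac{11}{8}+0.09+\ln 2\big)$. One checks $g'(\beta)<0$, so $g$ is minimized at $\beta=1.4$, where $g(1.4)=1.1\ldots>0$; this closes $L_0<B$ for all $\beta\in(0,1.4]$ and $s\geq 14$.

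The main obstacle is not any single hard step but arranging the finite check so that it closes with a genuine margin: one must extract the linear-in-$\eta$ coefficient $(5-2\beta^2)/4\beta$ cleanly via the partial fractions above, and then confirm that at the worst case $\beta=1.4$, $s=14$ the positive linear contribution $\approx 5.2$ still dominates the accumulated constants $\beta+\beta^2/4+11/8+K+\ln 2\approx 4.0$. Once this structure is exposed, the monotonicity of $g$ and the geometric bound on $K$ are routine.
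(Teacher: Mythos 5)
Your proof is correct, and it follows the same global strategy as the paper's: both establish $L_0<B$ by extracting from $-L_0$ a lower bound linear in $\eta$ with positive coefficient, reducing by monotonicity in $(\beta ,s)$ to the extreme case $\beta =1.4$, $s=14$, and closing with a numerical check (your final margin $g(1.4)=1.15\ldots$ essentially coincides with the paper's margin $3.835\ldots -2.664\ldots \approx 1.17$); also, both treat $L<L_0$ as already contained in (\ref{eqbeta}). The difference lies in the algebraic bookkeeping. The paper first replaces $C=2.5-(\beta /2)/(2\eta -\beta )$ by its minimal value $C^{\dagger}=C|_{\beta =1.4,s=14}$ over the region (using that $C$ increases in $\eta$ and decreases in $\beta$), then rearranges inequality (\ref{ineq1}) to the form $(C^{\dagger}/\beta -\beta )(\eta /2)-C^{\dagger}/2>\beta +\beta ^2/4+K+\ln 2$, argues monotonicity of both sides in $\beta$ and in $s$, and finally evaluates both sides, \emph{including the series} $K$, numerically at the corner. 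You instead decompose $-L_0$ exactly by partial fractions, discard the manifestly positive remainder $\beta /(8(2\eta -\beta ))$, which exposes the $\eta$-coefficient $(5-2\beta ^2)/4\beta$ and the structural threshold $\beta <\sqrt{5/2}$, and you replace the numerical evaluation of $K$ by the uniform geometric-series bound $K\leq 0.09$; this reduces everything to the one-variable inequality $g(\beta )>0$ checked at the single point $\beta =1.4$. What your version buys is self-containedness (no infinite series needs to be summed numerically) and an explanation of why the argument cannot tolerate large $\beta$; what the paper's buys is slightly less algebra. One small caveat: your remark that the threshold is ``precisely what forces the constant $e^{1.4}$'' in Theorem~\ref{tmestim1} overstates things --- the intrinsic ceiling of the method is $e^{\sqrt{5/2}}$ with $\sqrt{5/2}=1.58\ldots$, and $1.4$ is a choice leaving a comfortable margin at $s=14$; but this is commentary, not a step of the proof, and does not affect correctness.
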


\begin{proof}
Indeed, consider the quantities $L_0^*=-L_0$ and $B^*=-B$. 
We show that 
$L_0^*>B^*$ from which the lemma follows. This is tantamount to 

\begin{equation}\label{ineq}
C(\eta -\beta)/2\beta >(\beta /2)\eta +\beta +\beta ^2/4+K+K_1~.
\end{equation}
We majorize $K_1$ by $\ln 2$. We observe that 
$C=2.5-(\beta /2)/(2\eta -\beta )$ 
is increasing in $\eta$ (i.e. in $s$) 
and decreasing in $\beta$. Therefore inequality (\ref{ineq}) results from 
the inequality 

\begin{equation}\label{ineq1}
C^{\dagger}(\eta -\beta)/2\beta >(\beta /2)\eta +\beta +\beta ^2/4+K+\ln 2~,
\end{equation}
where $C^{\dagger}=C|_{\beta =1.4,s=14}=2.486692015\ldots$. 
Inequality (\ref{ineq1}) can be given the equivalent form

$$(C^{\dagger}/\beta -\beta )(\eta /2)-C^{\dagger}/2>
\beta +\beta ^2/4+K+\ln 2~.$$
The coefficient $C^{\dagger}/\beta -\beta$ is positive 
and decreasing in $\beta$ while the right-hand side is increasing in $\beta$.  
The left-hand side is increasing in $s$ while the right-hand side is 
decreasing in it. Therefore it suffices to prove the last inequality 
(hence inequality (\ref{ineq1})) for 
$\beta =1.4$ and $s=14$. The left and right-hand sides of (\ref{ineq1}) 
equal respectively $3.835469849\ldots$ and $2.664996872\ldots$. 
The lemma is proved. 
\end{proof} 

To deduce from the lemma the upper bound from Theorem~\ref{tmestim1} we 
set $\mu _s:=-(1-1.4/(2s-1))^{-2s}$; we apply a reasoning similar to  
the one concerning the lower bound and the quantity $\lambda _s$. 
One has $\mu _{14}=-4.440852689\ldots$. 
The quantity $\mu _s$ increases with $s$ and 
$\lim _{s\rightarrow \infty}\mu _s=-e^{1.4}=-4.055199967\ldots$.

\end{proof}

\subsection{The case $q\in (-1,0)$}

We begin the present subsection with 
a result concerning the case $q\in (0,1)$. Recall that, for $q\in (0,1)$, 
the third spectral value equals 
$\tilde{q}_3=0.630628\ldots$. Hence $(\tilde{q}_3)^{-3}=3.98\ldots <4$.  

\begin{prop}\label{propfirsttwo}
For $q\in (\tilde{q}_3,1)$, the first two rightmost real zeros of 
$\theta (q,.)$ are $>-156$.
\end{prop}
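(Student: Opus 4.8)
The plan is to show that for $q\in(\tilde q_3,1)$, $\theta(q,x)$ has no real zero in the interval $[-156,0)$ except possibly those strictly to the left of $-156$ being irrelevant, so that the two rightmost real zeros must lie in $(-156,0)$. By Theorem~\ref{tmknown1}, for $q>\tilde q_3$ there are at least three complex conjugate pairs, and the rightmost real zeros are simple. The natural tool is the functional equation \eqref{eqfunct}, $\theta(q,x)=1+qx\,\theta(q,qx)$, combined with sign information already available. First I would exploit that by Remarks~\ref{remsdivers1}(2) one has the localization $-q^{-2j-2}<\xi_{2j+2}<\xi_{2j+1}<-q^{-2j-1}$, so the rightmost zero $\xi_{2j+1}$ satisfies $\xi_{2j+1}>-q^{-2j-1}$; the issue is that for $q$ near $1$ and large $j$ this bound degenerates, so I must control the number of real zeros more carefully using part (3) of Theorem~\ref{tmknown1}.

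The key step is to estimate $q^{-1}$ and $q^{-2}$ on the relevant $q$-range and to use the explicit ordering of zeros. For $q\in(\tilde q_3,1)$ we are in the regime where at least three complex pairs exist, so the rightmost two \emph{real} zeros are the ones labelled $\xi_1,\xi_2$ in the ordering of Remarks~\ref{remsdivers1}(1), and the localization $-q^{-2}<\xi_2<\xi_1<-q^{-1}$ applies to them (taking $j=0$ in Remarks~\ref{remsdivers1}(2)). Thus it suffices to bound $-q^{-2}$ from below uniformly on $(\tilde q_3,1)$—but this is where the obstacle lies, since $q^{-2}\to 1$ as $q\to1^-$ gives only $\xi_2>-1$, far inside $-156$, so naively the bound is trivial near $q=1$ and the real danger is near $q=\tilde q_3^+$. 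There $q^{-2}$ is largest; with $\tilde q_3=0.630628\ldots$ one computes $\tilde q_3^{-2}=2.515\ldots$, so $\xi_2>-\tilde q_3^{-2}>-2.52>-156$. This already gives the result cleanly, provided the localization indices are correctly aligned.

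Hence the main logical step is to verify that for all $q\in(\tilde q_3,1)$ the rightmost two real zeros are precisely $\xi_1$ and $\xi_2$ in the indexing for which Remarks~\ref{remsdivers1}(2) gives $-q^{-2}<\xi_2<\xi_1<-q^{-1}$, and that this pair has not yet coalesced (which it has not, since coalescence of $\xi_1,\xi_2$ occurs only at $\tilde q_1<\tilde q_3$, after which they become complex). I would check that for $q>\tilde q_1$ the zeros $\xi_1,\xi_2$ are already complex, so the rightmost real zeros are relabelled; this is the subtle point—one must track which real zeros survive. After $\tilde q_3$, three pairs have departed, so the rightmost real zeros are $\xi_7,\xi_8$ in the original labelling, localized by $-q^{-8}<\xi_8<\xi_7<-q^{-7}$. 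The hard part will be controlling $-q^{-8}$: at $q=\tilde q_3$ one has $\tilde q_3^{-8}=(\tilde q_3^{-2})^4=(2.515\ldots)^4<41$, while as $q\to1^-$ this tends to $1$; since $q^{-8}$ is decreasing in $q$, the supremum of $q^{-8}$ over $(\tilde q_3,1)$ is $\tilde q_3^{-8}<41<156$.

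Therefore the proof reduces to the numerical verification that $(\tilde q_3)^{-8}<156$, together with the structural fact from Theorem~\ref{tmknown1}(3) that for $q\in(\tilde q_3,1)$ the rightmost two real zeros carry indices $\xi_7,\xi_8$ and obey the localization of Remarks~\ref{remsdivers1}(2). I expect the only genuine obstacle to be the bookkeeping of which indexed zeros remain real as $q$ crosses each spectral value $\tilde q_j$, and ensuring the inequality $-q^{-8}>-156$ is the relevant one rather than a higher power; once the index alignment is fixed, the estimate $(\tilde q_3)^{-8}<156$ closes the argument, and the stated bound $-156$ is comfortably conservative.
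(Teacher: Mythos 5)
Your proposal correctly identifies the crux --- ``the bookkeeping of which indexed zeros remain real as $q$ crosses each spectral value $\tilde{q}_j$'' --- but then gets that bookkeeping wrong, and this is a genuine gap, not a detail. You claim that after $\tilde{q}_3$ the two rightmost real zeros are $\xi_7,\xi_8$ \emph{for all} $q\in(\tilde{q}_3,1)$, so that the required bound is $\sup_{q>\tilde{q}_3}q^{-8}=(\tilde{q}_3)^{-8}<41$. This contradicts part (3) of Theorem~\ref{tmknown1}: the interval $(\tilde{q}_3,1)$ contains the infinitely many spectral values $\tilde{q}_4<\tilde{q}_5<\cdots\to 1^-$, and at each $\tilde{q}_{k+1}$ the pair $\xi_{2k+1},\xi_{2k+2}$ coalesces and leaves the real axis. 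Hence for $q\in(\tilde{q}_k,\tilde{q}_{k+1}]$, $k\geq 3$, the two rightmost real zeros are $\xi_{2k+1},\xi_{2k+2}$, and the relevant localization is $-q^{-2k-2}<\xi_{2k+2}<\xi_{2k+1}<-q^{-2k-1}$, with exponent growing in $k$. Your argument therefore only covers $q\in(\tilde{q}_3,\tilde{q}_4]$. For larger $k$ the worst case is $q=(\tilde{q}_k)^+$, and one must bound $(\tilde{q}_k)^{-2k-2}$ \emph{uniformly in} $k$; this does not follow from any single numerical evaluation, since it requires knowing how fast $\tilde{q}_k\to 1$ (indeed $(\tilde{q}_k)^{-2k-2}\to e^{\pi}$ by the asymptotics $\tilde{q}_k=1-\pi/2k+o(1/k)$, but that asymptotic relation, with its $o(1/k)$ term, is not a uniform estimate you can invoke directly).

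The paper closes exactly this gap by trading the $q$-dependent power bound for the bound on \emph{double} zeros. At $q=\tilde{q}_k$ the double zero satisfies $\xi_{2k}=\xi_{2k-1}<-(\tilde{q}_k)^{-2k+1}$, whence, multiplying by $(\tilde{q}_k)^{-3}$,
\begin{equation*}
(\tilde{q}_k)^{-3}\xi_{2k}\;<\;-(\tilde{q}_k)^{-2k-2}\;<\;\xi_{2k+2}\;<\;\xi_{2k+1}
\qquad\text{for all }q\in[\tilde{q}_k,\tilde{q}_{k+1}]\,,
\end{equation*}
where one also uses that $-q^{-2k-2}$ is minimal on $[\tilde{q}_k,\tilde{q}_{k+1}]$ at $q=\tilde{q}_k$. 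Then Theorem~\ref{tmestim1} gives $\xi_{2k}=y_k>-39$ uniformly in $k$, and monotonicity of the spectral values gives $(\tilde{q}_k)^{-3}\leq(\tilde{q}_3)^{-3}<4$, so $\xi_{2k+1},\xi_{2k+2}>4\times(-39)=-156$. Note that the constant $156$ in the statement is exactly $4\times 39$; your proposal, which never uses the double-zero bound of Theorem~\ref{tmestim1}, could not produce it. To repair your argument you would have to either import this comparison with the double zeros or establish a uniform upper bound on $(\tilde{q}_k)^{-2k-2}$ by some other means.
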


\begin{proof}
Suppose that $q\in (\tilde{q}_k,\tilde{q}_{k+1}]$, $k\geq 3$. Then the two 
rightmost zeros of $\theta (q,.)$ are $\xi _{2k+2}$ and $\xi _{2k+1}$. They are 
defined for $q\in (0,\tilde{q}_{k+1}]$; for $q=\tilde{q}_{k+1}$ they coinside. 
For $q=\tilde{q}_k$, one has 

$$\begin{array}{ccccccccc}
-(\tilde{q})^{-2k-2}&<&\xi _{2k+2}&<&\xi _{2k+1}&<&-(\tilde{q}_k)^{-2k-1}&
<&-(\tilde{q}_k)^{-2k}\\ \\ &&&<&\xi _{2k}&=&
\xi _{2k-1}&<&-(\tilde{q}_k)^{-2k+1}~,\end{array}$$
see Fig.~\ref{zerosetPTF}. Observe that 
for $q\in [\tilde{q}_k,\tilde{q}_{k+1}]$, 
$k\geq 1$, the value of $-q^{-2k-2}$, 
the minoration of $\xi _{2k+2}$, is minimal when $q=\tilde{q}_k$.  
Hence 

$$(\tilde{q}_k)^{-3}\xi _{2k}<-(\tilde{q})^{-2k-2}<\xi _{2k+2}<\xi _{2k+1}~.$$ 
The factor 
$(\tilde{q}_k)^{-3}$ is maximal for $k=3$ whereas $-39<\xi _{2k}$ 
(see~Theorem~\ref{tmestim1}). This together with $(\tilde{q}_3)^{-3}<4$ implies 
$-156=4\times (-39)<\xi _{2k+2}<\xi _{2k+1}$. 
\end{proof}

The basic result of the present subsection is the following theorem:

\begin{tm}\label{tmestim2}
For $q\in (-1,0)$, all double zeros of $\theta (q,.)$ belong to the interval 
$(-13.29,23.65)$.
\end{tm}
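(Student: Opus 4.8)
The plan is to bound the double zeros $\bar{y}_j$ by locating them between nearby curvilinear asymptotes $x=-q^{-a}$ and then analyzing the resulting explicit functions of $q$, exactly as was done for the case $q\in(0,1)$ in Theorem~\ref{tmestim1}. Recall from Theorem~\ref{tmgeom}(2) and the associated ordering of the quantities $\zeta_j,q\zeta_j,\eta_k,q\eta_k$ that the negative double zeros $\bar{y}_{2\nu-1}$ lie on the curves $\Gamma_\nu^-$ (whose branches are asymptotic to $x=-q^{-4\nu+2}$ and $x=-q^{-4\nu}$) while the positive double zeros $\bar{y}_{2\nu}$ lie on the curves $\Gamma_\nu^+$ (asymptotic to $x=-q^{-4\nu+1}$ and $x=-q^{-4\nu-1}$). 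The point $X_\nu^\pm$ where the tangent is horizontal occurs at $q=\bar{q}_{2\nu-1}$ (resp. $\bar{q}_{2\nu}$), and its $x$-coordinate is trapped between the two asymptotic branches evaluated at that same $q$.

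First I would treat the negative double zeros. I would introduce an auxiliary curve $x=-q^{-a}$ with half-integer exponent analogous to $\tilde{Q}_s:x=-q^{-2s+1/2}$ and study $\theta(q,-q^{-a})$ along it, using the Jacobi triple product factorization $\Theta(q,-q^{-a})=\prod_{j\ge1}(1-q^j)(1-q^{j-a})(1-q^{j+a-1})$ from (\ref{eqJac}) together with the decomposition $\theta=\Theta-G$ and the identity $-G(q,-q^{-a})=q^{\,?}(1-\varphi_a)$. The sign analysis of the partial-product factors (now for $q\in(-1,0)$, where the parities and signs differ from the positive case) localizes the intersection of $\Gamma_\nu^-$ with the auxiliary curve and hence the $q$-interval containing $\bar{q}_{2\nu-1}$. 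Substituting the endpoints of this $q$-interval into the asymptotic branch $x=-q^{-4\nu+2}$ (the rightmost branch, which dominates near the horizontal tangent) yields an explicit function of the form $-(1-\beta/\eta)^{-c\nu}$ whose monotonicity in $\nu$ I would establish by a one-variable calculus argument paralleling Lemma~\ref{lmtechnical}; its supremum over $\nu$ gives the lower bound $-13.29$.

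The positive double zeros $\bar{y}_{2\nu}>0$ require a separate but structurally identical argument on the curves $\Gamma_\nu^+$, again sandwiching $X_\nu^+$ between the branches $x=-q^{-4\nu+1}$ and $x=-q^{-4\nu-1}$ and using the sign information of Remarks~\ref{remsdivers2}(3) to the effect that inside $\Gamma_\nu^+$ one has $\theta<0$ while for $x>\eta_1$ outside all the curves one has $\theta<0$ as well. Because $q\in(-1,0)$, the powers $q^{-a}$ carry signs according to the parity of the exponent, so I must track these signs carefully to see that the positive double zeros stay below $23.65$; the extreme value is attained for the smallest admissible $\nu$ and approached monotonically, so it again reduces to evaluating an explicit function at a finite index and taking a limit.

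The main obstacle I anticipate is the sign bookkeeping for $q\in(-1,0)$: unlike the positive case, the factors $1-q^{j-a}$ in the triple product alternate in sign with $j$ even when $q<0$, and the partial theta function has both positive and negative real zeros (Theorem~\ref{tmknown2}), so the clean ``inside/outside'' dichotomy that made the positive case transparent must be reestablished branch by branch using the ordering diagram for $\zeta_j,\eta_k$ and their $q$-multiples. Converting the asymptotic equivalences of Theorem~\ref{tmgeom}(2) into rigorous two-sided inequalities valid not merely as $q\to0^-$ but on the whole interval up to $\bar{q}_{2\nu-1}$ (resp. $\bar{q}_{2\nu}$) — the analog of the inequality $1/(1+q^{k-1})<\varphi_k<1/(1+q^k)$ used in the positive case — is where the real work lies, and I expect to need the functional equation (\ref{eqfunct}) and the monotonicity properties of $\varphi_k$ from Theorem~\ref{tmfuncphik} to pin down the endpoints precisely enough to reach the stated numerical bounds.
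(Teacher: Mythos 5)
There is a genuine gap: your plan never performs the reduction that actually makes the case $q\in(-1,0)$ tractable, and the machinery you propose to port from the positive case breaks down for negative $q$. The paper's proof sets $v:=-q\in(0,1)$ and splits $\theta$ into its even and odd parts in $x$,
\begin{equation*}
\theta (-v,x)=\psi _1(v,x)+\psi _2(v,x)~,\qquad
\psi _1(v,x)=\theta (v^4,-x^2/v)~,\qquad
\psi _2(v,x)=-vx\,\theta (v^4,-vx^2)~,
\end{equation*}
so that every object in sight is a value of $\theta$ at the \emph{positive} parameter $v^4$. The double zeros of $\theta(-v,\cdot)$ are then trapped, via the interlacing of the zeros $y_{\pm k}$ of $\psi_1$ and $z_{\pm k}$ of $\psi_2$ (with $y_{\pm k}=vz_{\pm k}$), and the numerical bounds come directly from the positive-$q$ results already proved: Proposition~\ref{propfirsttwo} (itself a corollary of Theorem~\ref{tmestim1}) gives $|X_{2\nu}|<156$ for the relevant zeros of $\theta(v^4,X)$, whence the negative double zeros satisfy $z_{-2\nu}>-\sqrt{156}/0.94>-13.29$, and the ordering inequalities of Remarks~\ref{remsdivers1}(2) give $y_{2\nu +1}<v^{-5}z_{2\nu}<(0.630628)^{-5/4}\times 13.29<23.65$ for the positive ones. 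In other words, the constants $-13.29$ and $23.65$ are not produced by any intrinsic negative-$q$ analysis; they are transported from the bound $-39$ of Theorem~\ref{tmestim1} through the substitution $X=-vx^2$.

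Your route, by contrast, tries to rerun the proof of Theorem~\ref{tmestim1} inside $q\in(-1,0)$, and this fails at several concrete points. First, the auxiliary curves $x=-q^{-a}$ with half-integer $a$ (the analogs of $\tilde{Q}_s$) are not even real-valued when $q<0$, so the whole localization scheme has no meaning there; one must first pass to $v=-q$, which is precisely the step you omit. Second, the sign and monotonicity analysis behind Proposition~\ref{propKdagger} requires every factor $1-q^j$, $1-q^{j+a-1}$ in the triple product to be positive and decreasing in $q$; for $q<0$ the powers $q^j$ alternate in sign with $j$, so the factors are neither of a fixed sign pattern amenable to that argument nor monotone, and this is an obstruction of principle, not the ``bookkeeping'' you anticipate. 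Third, all the $\varphi_k$ results you invoke (Theorem~\ref{tmfuncphik}, Remarks~\ref{remsphik}) are proved only for $q\in[0,1]$, and the two-sided bounds $1/(1+q^{k-1})<\varphi_k<1/(1+q^k)$ you hope to generalize have no established analog on $(-1,0)$. Finally, your sketch gives no mechanism by which the specific constants $-13.29$ and $23.65$ could emerge; without the reduction to the positive parameter $v^4$ and the reuse of the bound from Theorem~\ref{tmestim1}, the stated interval cannot be reached by the outlined means.
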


\begin{proof}
It is explained in \cite{KoPRSE2} how for $q\in (-1,0)$ the simple real 
zeros of $\theta$ coalesce to form double ones and then complex conjugate 
pairs. We reproduce briefly the reasoning from \cite{KoPRSE2}. 

We set $v:=-q$ (hence $v\in (0,1)$) and 

\begin{equation}\label{eqpsi1psi2}
\begin{array}{ccccccccc}
\theta (q,x)&=&\theta (-v,x)&=&\psi _1+\psi _2&,&{\rm where}&&\\ \\ 
\psi _1(v,x)&:=&\theta (v^4,-x^2/v)&&{\rm and}&&\psi _2(v,x)&:=&
-vx\theta (v^4,-vx^2)~;\end{array}\end{equation}
the equality $\theta (-v,x)=\psi _1(v,x)+\psi _2(v,x)$ is to be 
checked directly. 
For $v$ fixed, the function $\psi _1$ is even while $\psi _2$ is odd. Denote 
by $y_{\pm k}$ and $z_{\pm k}$ the zeros of $\psi _1$ and $\psi _2$, where 

$$\begin{array}{cccccccccccccc}
y_k&=&-y_{-k}&,&y_{-k-1}&<&y_{-k}&<&0&<&y_k&<&y_{k+1}&,\\ \\ z_k&=&-z_{-k}&,&
z_{-k-1}&<&z_{-k}&<&0&<&z_k&<&z_{k+1}&,\\ \\ y_{\pm k}&=&vz_{\pm k}&.&&&&&&&&&&
\end{array}$$
For $v^4\in (0,\tilde{q}_1)$, all zeros of $\psi _1$ and all zeros of $\psi _2$ 
are simple (see part (1) of Theorem~\ref{tmknown1}). 
For small values of $v$, the zeros $y_{\pm k}$ and $z_{\pm k}$ are close to 
$\pm v^{-(4k-1)/2}$ and $\pm v^{-(4k+1)/2}$ respectively. 

Suppose first that $x<0$. The function $\psi _1$ (resp. $\psi _2$) 
is negative on the interval $(y_{-2\nu},y_{-2\nu +1})$ 
(resp. $(z_{-2\nu},z_{-2\nu +1})$) and positive on the interval 
$(y_{-2\nu -1},y_{-2\nu})$ (resp. $(z_{-2\nu -1},z_{-2\nu})$). For small values of 
$v$, the order of these points and of their approximations by powers of $v$ 
on the real line looks like this:

\begin{equation}\label{eqfirststring}
\begin{array}{cccccccc}
z_{-2\nu -1}&<&y_{-2\nu -1}&<&z_{-2\nu}&<
&y_{-2\nu}&<\\  
-v^{-4\nu -5/2}&&-v^{-4\nu -3/2}&+&-v^{-4\nu -1/2}&&-v^{-4\nu +1/2}&-\\ \\ 
z_{-2\nu +1}&<&y_{-2\nu +1}&<&0&.&&\\  
-v^{-4\nu +3/2}&&-v^{-4\nu +5/2}&+&&&&\end{array}
\end{equation}
The signs $+$ and $-$ in the second rows indicate intervals on which both 
functions $\psi _1$ and $\psi _2$ (hence $\theta (-v,.)$ as well) are positive 
or negative respectively. Thus for $v^4\in (0,\tilde{q}_1)$, $\theta (-v,.)$ 
has a simple zero between any two successive signs $+-$ or $-+$. 

As $v$ increases, for $v^4=\tilde{q}_{\nu}$, the zeros $y_{-2\nu}$ and 
$y_{-2\nu +1}$ of $\psi _1$ and the zeros $z_{-2\nu}$ and 
$z_{-2\nu +1}$ of $\psi _2$ coalesce and these two functions are nonnegative on 
the interval $(y_{-2\nu -1},0)$. Hence 
\vspace{1mm}

1) The two simple zeros of $\theta (-(\tilde{q}_{\nu})^{1/4},.)$, 
which for small values of $v$ belong to $(y_{-2\nu -1},y_{-2\nu +1})$, 
coalesce for 
some $v_0\in (0,(\tilde{q}_{\nu})^{1/4})$, so $\theta (-v_0,.)$ has a double zero 
in the interval $(y_{-2\nu -1},0)$. In fact, in the interval $[z_{-2\nu},0)$, 
because both $\psi _1$ and $\psi _2$ are positive on 
$(y_{-2\nu -1},z_{-2\nu})$. For $v=(v_0)^+$, the double zero of $\theta (-v_0,.)$ 
gives rise to a complex conjugate pair of zeros.  
\vspace{1mm}

2) For some $v_*\in (0,v_0]$, one has $y_{-2\nu}=z_{-2\nu +1}$, so 

$$\psi _1(v_*,y_{-2\nu})\equiv \theta (v_*^4,-y_{-2\nu}^2/v_*)=
\psi _2(v_*,y_{-2\nu})\equiv -v_*y_{-2\nu}\theta (v_*^4,-v_*y_{-2\nu}^2)=0~.$$ 
Hence 
$\theta (-v_*,y_{-2\nu})=0$, and either $y_{-2\nu}$ is a double zero of 
$\theta (-v_*,.)$ (hence $v_*=v_0$) or $\theta (-v_*,.)$ has another 
negative zero which is $>y_{-2\nu -1}$ and one has $v_*\in (0,v_0)$. 
\vspace{1mm}

One can introduce 
the new variable $X:=-vx^2$ and denote by $\cdots <X_{j+1}<X_j<\cdots <0$ 
the zeros of the function $\theta (v^4,X)$. 
We apply to this function Proposition~\ref{propfirsttwo}, 
for $v\in ((\tilde{q}_{\nu})^{1/4},(\tilde{q}_{\nu +1})^{1/4}]$, $\nu \geq 4$.
This gives $X_{2\nu}>-156$ (hence $|X_{2\nu}|<156$). 
Indeed, $X_{2\nu}$ and $X_{2\nu -1}$ are 
the two rightmost of the real negative zeros of $\theta (v^4,X)$. 

On the other hand, one has $\psi _2(v,x)=-vx\theta (v^4,-vx^2)$, i.e.  
$z_{-2\nu}=-(|X_{2\nu}|/v)^{1/2}$, and $v^4\in (\tilde{q}_3,1)=(0.630628\ldots ,1)$, 
hence $v^{1/2}>0.94$ and one can write 

$$z_{-2\nu}>-|X_{2\nu}|^{1/2}/0.94>-\sqrt{156}/0.94=-12.48999600\ldots /0.94>
-13.29~.$$
Thus for $\nu \geq 4$, the two rightmost negative zeros of $\theta (-v,.)$ 
belong to the interval $(-13.29,0)$, and so do the negative double zeros of 
$\theta (-v,.)$ as well whenever $-v$ is a spectral value, 
i.e. $-v=\bar{q}_{\nu}$.
The approximative values of the first three double negative 
zeros of $\theta (\bar{q}_{\nu},.)$   
are $-2.991$, $-3.621$ and $-3.908$, see~\cite{KoPRSE2}. They correspond to 
$\nu =1$, $3$ and~$5$.

Consider now the positive zeros of $\theta$. The analog of inequalities 
(\ref{eqfirststring}) reads:

\begin{equation}\label{eqsecondstring}
\begin{array}{ccccccccc}
&&&0&<&y_{2\nu -1}&<&z_{2\nu -1}&\\ 
&&&&&v^{-4\nu +5/2}&-&v^{-4\nu +3/2}&\\ \\ 
<&y_{2\nu}&<&z_{2\nu}&<&y_{2\nu +1}&<&z_{2\nu +1}&.\\ 
&v^{-4\nu +1/2}&+&v^{-4\nu -1/2}&&v^{-4\nu -3/2}&-&
v^{-4\nu -5/2}&\end{array}
\end{equation}
The two leftmost positive zeros of $\theta (-v,.)$ (and, in particular, 
the double zeros of $\theta (-v,.)$ for $-v=\bar{q}_{\nu}$, $\nu \geq 4$) 
belong to the interval 
$(0,y_{2\nu +1})$ (because $\theta (-v,.)$ has a simple zero between any two 
successive signs $+-$ or $-+$, see the second lines of (\ref{eqsecondstring})). 
Thus one has to find a majoration for $y_{2\nu +1}$. From part (2) of 
Remarks~\ref{remsdivers1} one deduces the inequalities:

$$v^{-8\nu +4}<|X_{2\nu}|<v^{-8\nu}~~~\, \, {\rm and}~~~\, \, 
v^{-8\nu -4}<|X_{2\nu +1}|<v^{-8\nu -8}~.$$
As $z_{2\nu}=(|X_{2\nu}|/v)^{1/2}$ and $z_{2\nu +1}=(|X_{2\nu +1}|/v)^{1/2}$, one obtains 

$$v^{-4\nu +3/2}<z_{2\nu}<v^{-4\nu -1/2}~~~\, \, {\rm and}~~~\, \,  
v^{-4\nu -5/2}<z_{2\nu +1}<v^{-4\nu -9/2}~.$$
Hence $y_{2\nu +1}=vz_{2\nu +1}<v^{-4\nu -7/2}<v^{-5}z_{2\nu}<
(0.630628)^{-5/4}\times 13.29=23.64\ldots$. The 
first three double positive zeros of $\theta (-v,.)$ equal 
$2.907$, $3.523$ and $3.823$, see~\cite{KoPRSE2}. They correspond to 
$-v=\bar{q}_{\nu}$ for 
$\nu =2$, $4$ and~$6$. 
\end{proof}

\section{Behaviour of the complex conjugate pairs
\protect\label{seccomplex}}

We consider first the case $q\in (-1,0)$ in which the results admit 
shorter formulations and proofs. 

\subsection{The case $q\in (-1,0)$\protect\label{subseccomplexqneg}}

We recall first a result which is proved in \cite{KoPMD}:

\begin{tm}\label{tmcplxneg}
For any $q\in (-1,0)$, all zeros of $\theta (q,.)$ belong to the strip 
$\{ |${\rm Im}$x|<132\}$.
\end{tm}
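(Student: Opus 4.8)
The plan is to bound the imaginary parts of all zeros of $\theta(q,\cdot)$ for $q\in(-1,0)$ by exhibiting an explicit horizontal strip outside of which $\theta$ cannot vanish. The natural strategy is to split off the dominant low-order terms of the series and show that for $|\mathrm{Im}\,x|\geq 132$ their magnitude strictly exceeds the sum of the moduli of all remaining terms, so that $\theta(q,x)\neq 0$. Concretely, I would write $\theta(q,x)=1+qx+q^3x^2+\sum_{j\geq 3}q^{j(j+1)/2}x^j$ and attempt to control the tail uniformly in $q\in(-1,0)$, using that $|q^{j(j+1)/2}|=|q|^{j(j+1)/2}<1$ and that the triangular exponents $j(j+1)/2$ grow quadratically, which forces rapid decay of the coefficients once $|x|$ is fixed relative to the geometric growth of $|x|^j$.

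First I would observe that writing $x=u+iw$ with $|w|\geq 132$, the imaginary part of $\theta$ is $\mathrm{Im}\,\theta(q,x)=\sum_{j\geq 1}q^{j(j+1)/2}\,\mathrm{Im}(x^j)$, and a zero requires this to vanish simultaneously with the real part. Rather than attack both parts, the cleaner route is an absolute-value comparison: I would try to isolate a single term (most plausibly the linear term $qx$, whose modulus is $|q|\,|x|\geq |q|\cdot 132$, or a suitable partial sum) and show $|qx|$ alone cannot dominate because $|q|$ can be small, so instead one needs the combined contribution of the first several terms. The more robust approach is to compare against the Jacobi-type factorizations already available in the paper. Since the proof is attributed to a separate reference, I expect the argument to leverage the product formula for $\Theta$ from equation~(\ref{eqJac}) together with the splitting $\theta=\Theta-G$, bounding $G$ on the strip and using that the infinite product $\Theta(q,x)=\prod_{j\geq 1}(1-q^j)(1+xq^j)(1+q^{j-1}/x)$ is controlled once $|x|$ is large and $|q|<1$.

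The key steps, in order, would be: fix $q\in(-1,0)$ and set $v=-q\in(0,1)$; derive explicit upper bounds for the tail $\sum_{j\geq N}v^{j(j+1)/2}|x|^j$ that are uniform in $v$ for each fixed $|x|$, using the quadratic growth of exponents to beat the geometric growth in $|x|^j$ whenever $|x|$ stays below a threshold; then on the region $|\mathrm{Im}\,x|\geq 132$ combine this with a lower bound on the modulus of the leading block of terms. The parity and oddness structure exploited elsewhere (the decomposition $\theta(-v,x)=\psi_1+\psi_2$ from~(\ref{eqpsi1psi2}), with $\psi_1$ even and $\psi_2$ odd in $x$) may allow one to treat real and imaginary directions separately and to reduce to the function $\theta(v^4,\cdot)$ with positive parameter, where the known strip/decay estimates of Section~\ref{secproperties} apply.

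The main obstacle I anticipate is the non-uniformity as $v\to 0^+$ versus $v\to 1^-$: for $v$ near $1$ the coefficients decay slowly, so the ``dominant term'' argument is weakest and one must sum many terms carefully; for $v$ near $0$ the zeros march off to infinity along the curvilinear asymptotes $x=-q^{-s}$, so one must verify that large-modulus zeros nevertheless keep bounded imaginary part. Reconciling both regimes into a single clean constant like $132$ will require the delicate quantitative estimates, and pinning that explicit numerical bound is precisely where the heavy computation lives; since the theorem is quoted from~\cite{KoPMD}, I would expect to cite that reference for the final constant rather than rederive it here.
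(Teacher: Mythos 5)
Your attempt should first be measured against what the paper actually does for this statement: nothing. Theorem~\ref{tmcplxneg} is not proved in this paper at all; it is introduced with the words ``we recall first a result which is proved in \cite{KoPMD}'', and no argument for it appears anywhere in the text. So your closing remark --- that you would expect to cite \cite{KoPMD} for the explicit constant rather than rederive it --- is precisely what the paper does, and to that extent your instinct about the provenance of the result is correct.

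Judged as a standalone proof attempt, however, there is a genuine gap: what you wrote is a plan, not an argument, and the plan's central mechanism cannot work as stated. You propose to isolate a leading block of terms and dominate the tail $\sum_{j\ge N}|q|^{j(j+1)/2}|x|^j$ on the region $\{|\mathrm{Im}\,x|\ge 132\}$. But that region contains points of arbitrarily large modulus, and $\theta(q,\cdot)$ genuinely has zeros of arbitrarily large modulus --- the real zeros escape to infinity along the curvilinear asymptotes $x=-q^{-s}$ as $q\to 0^-$ (Theorem~\ref{tmgeom}). Hence any estimate of the form ``the leading terms beat the tail once $|x|$ is large'' is simply false; the whole content of the theorem is that large-modulus zeros have \emph{small imaginary part}, and a term-by-term comparison of moduli cannot distinguish $x$ from $|x|$, i.e.\ cannot distinguish the strip from its complement. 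Separately, as $q\to -1^+$ the coefficients $|q|^{j(j+1)/2}$ decay arbitrarily slowly, so no fixed truncation dominates uniformly in $q$ either. You flag both obstructions yourself in your final paragraph, but flagging them is where the proof would have to begin: the decompositions you point to ($\theta=\Theta-G$ with the triple product~(\ref{eqJac}), or $\theta(-v,x)=\psi_1+\psi_2$ from~(\ref{eqpsi1psi2})) are plausible starting points, yet none of the estimates that would extract the constant $132$ from them are carried out, so no part of the claimed bound is actually established.
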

In the present subsection we prove the following result:

\begin{tm}\label{tmcomplexqneg}
For any $q\in (-1,0)$ and for any $y\in \mathbb{R}$, one has 
{\rm Re}$(\theta (q,iy))\neq 0$. Hence the zeros of $\theta$ do not cross 
the imaginary axis.
\end{tm}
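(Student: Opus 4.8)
The plan is to compute $\mathrm{Re}(\theta(q,iy))$ explicitly and show that it is bounded below by a strictly positive constant; this immediately rules out any purely imaginary zero of $\theta(q,.)$, and the crossing statement follows by continuity.

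First I would substitute $x=iy$ into the defining series. Since $\sum_{j\geq 0}|q|^{j(j+1)/2}|y|^j<\infty$ for $|q|<1$ and each fixed $y$, the series converges absolutely, so I may separate real and imaginary parts termwise. The real part collects exactly the even-indexed terms $j=2m$, for which $i^{2m}=(-1)^m$ while the exponent becomes $j(j+1)/2=m(2m+1)$. This gives
$$\mathrm{Re}(\theta(q,iy))=\sum_{m=0}^{\infty}(-1)^m q^{m(2m+1)}y^{2m}~.$$

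Next I would exploit the sign structure that arises because $q<0$. Writing $q=-v$ with $v\in(0,1)$ and observing that $m(2m+1)$ has the same parity as $m$ (the factor $2m+1$ being odd), one obtains $q^{m(2m+1)}=(-1)^m v^{m(2m+1)}$, and therefore $(-1)^m q^{m(2m+1)}=v^{m(2m+1)}>0$. Consequently every term of the series above is nonnegative, and the $m=0$ term equals $1$, so
$$\mathrm{Re}(\theta(q,iy))\geq 1>0 \qquad \text{for all } y\in\mathbb{R},\ q\in(-1,0)~.$$

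Finally I would draw the geometric conclusion. Since $\mathrm{Re}(\theta(q,iy))$ never vanishes, $\theta(q,.)$ has no zero on the imaginary axis for any $q\in(-1,0)$. Because the zeros depend continuously on $q$, a complex conjugate pair could pass from one open half-plane to the other only by crossing the imaginary axis, i.e.\ by taking a purely imaginary value, which is impossible; hence no zero ever crosses. I do not expect a genuine obstacle here: the decisive point is the elementary parity observation that collapses all the signs to $+$, converting an ostensibly alternating series into one with strictly positive terms. The only matter requiring care is the termwise separation of real and imaginary parts, which is justified by absolute convergence of the series for $|q|<1$.
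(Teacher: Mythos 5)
Your proof is correct and takes essentially the same route as the paper: the paper likewise restricts $\theta$ to the imaginary axis, identifies $\mathrm{Re}\,(\theta (q,iy))$ with the even-indexed part of the series, and, setting $\rho :=-q\in (0,1)$, recognizes this real part as $\theta (\rho ^4,y^2/\rho )$, which is positive because $\theta (q,x)>0$ for $q\in (0,1)$, $x\geq 0$ --- exactly the positivity your parity observation establishes termwise. The only difference is cosmetic: the paper packages the positive series as a value of $\theta$ at positive arguments, while you verify the sign collapse $(-1)^m q^{m(2m+1)}=v^{m(2m+1)}>0$ by hand.
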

It would be interesting to know whether there exists a vertical strip, 
containing in its interior the imaginary axis, in which, 
for any $q\in (-1,0)$, 
$\theta (q,.)$ has no zeros; and whether there exists a compact set 
(consisting 
of two components, one in the left and one in the right half-plane) 
to which belong all complex conjugate pairs of zeros, 
for all $q\in (-1,0)$.  

\begin{proof}
To consider the restriction of $\theta$ to the imaginary axis we 
set $\theta ^{\ddagger}(q,y):=\theta (q,iy)$, $y\in \mathbb{R}$. Clearly, 

\begin{equation}\label{eqdecomp1}
\begin{array}{ccccc}
\theta ^{\ddagger}(q,y)&=&\sum _{j=0}^{\infty}(-1)^jq^{j(2j+1)}y^{2j}&+&
iqy\sum _{j=0}^{\infty}(-1)^jq^{j(2j+3)}y^{2j}\\ \\ 
&=&\theta (q^4,-y^2/q)&+&iqy\theta (q^4,-qy^2)~.\end{array}
\end{equation}
Suppose now that $q\in (-1,0)$. 
To interpret equalities (\ref{eqdecomp1}) easier 
we set $\rho :=-q$ (hence $\rho \in (0,1)$). Thus 

\begin{equation}\label{eqdecomp2}
\theta ^{\ddagger}(q,y)=\theta (q^4,-y^2/q)+iqy\theta (q^4,-qy^2)=
\theta (\rho ^4,y^2/\rho )-
i\rho y\theta (\rho ^4,\rho y^2)~.
\end{equation}
Both the real and the imaginary parts of $\theta ^{\ddagger}$ 
are expressed as values of 
$\theta (q,x)$ for $q\in (0,1)$, $x\geq 0$ (with $q=\rho ^4$ and 
$x=y^2/\rho$ or $x=\rho y^2$). Hence the real part of $\theta ^{\ddagger}$ 
is nonzero for all 
$y\in \mathbb{R}$ (because $\theta (q,x)>0$ for $q\in (0,1)$, $x\geq 0$) 
which means 
that for $q\in (-1,0)$, the zeros of $\theta$ do not 
cross the imaginary axis.
\end{proof}

\subsection{The case $q\in (0,1)$\protect\label{subseccomplexqpos}}

We remind first a result from \cite{KoPMD}:

\begin{tm}\label{tmcplxpos}
For any value of the 
parameter $q\in (0,1)$, all zeros of the function $\theta (q,.)$ belong to 
the domain 
$\{ {\rm Re}~x<0$, $|{\rm Im}~x|<132\}~\cup ~\{ {\rm Re}~x\geq 0, 
|x|<18\}$.
\end{tm}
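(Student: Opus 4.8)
The plan is to prove the contrapositive: I will show that $\theta(q,\cdot)$ has no zeros in either of the two complementary regions, namely (i) $\{\mathrm{Re}\,x\ge 0,\ |x|\ge 18\}$ and (ii) $\{\mathrm{Re}\,x<0,\ |\mathrm{Im}\,x|\ge 132\}$. The central tool will be the Jacobi triple product decomposition already used in the proof of Proposition~\ref{propKdagger}: writing $\theta=\Theta-G$ with $G(q,x)=\sum_{m\ge 1}q^{m(m-1)/2}x^{-m}$ and
\[
\Theta(q,x)=\prod_{j\ge 1}(1-q^j)(1+xq^j)(1+q^{j-1}/x),
\]
see (\ref{eqJac}). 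The key structural fact is that every zero of $\Theta(q,\cdot)$ lies on the negative real axis (they are exactly $x=-q^{-j}$ and $x=-q^{\,j-1}$, $j\ge 1$), so away from that axis $\Theta$ is nonzero and I can hope to bound it from below. Moreover $|G|\le\sum_{m\ge1}|x|^{-m}=1/(|x|-1)$ is small as soon as $|x|$ is large, so in both regions it will suffice to establish $|\Theta(q,x)|>|G(q,x)|$.

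For region (i), fix $\mathrm{Re}\,x\ge 0$. Then $\mathrm{Re}(1+xq^j)=1+q^j\mathrm{Re}\,x\ge 1$ and, since $\mathrm{Re}(1/x)\ge 0$, also $\mathrm{Re}(1+q^{j-1}/x)\ge 1$; hence each factor has modulus $\ge 1$ and $|\Theta|\ge\prod_{j\ge1}(1-q^j)$. For $q$ bounded away from $1$ this constant exceeds $1/(|x|-1)$ once $|x|\ge 18$ and we are done. The delicate case is $q\to 1^-$, where $\prod_{j\ge1}(1-q^j)\to 0$; there I would exploit that the factors with small $j$ are large: for those $j$ with $q^j|x|\ge 2$ one has $|1+xq^j|\ge q^j|x|-1\ge \tfrac12 q^j|x|$, so with $J:=\lfloor\log(|x|/2)/\log(1/q)\rfloor$,
\[
|\Theta(q,x)|\ \ge\ \Big(\prod_{j\ge1}(1-q^j)\Big)\,(|x|/2)^{J}\,q^{J(J+1)/2}.
\]
Writing $q=e^{-\varepsilon}$ and inserting the Dedekind-eta asymptotics $\prod_{j\ge1}(1-q^j)\sim\sqrt{2\pi/\varepsilon}\,e^{-\pi^2/6\varepsilon}$ together with $J\sim\log(|x|/2)/\varepsilon$, the right-hand side behaves like $\exp\big(((\log(|x|/2))^2/2-\pi^2/6)/\varepsilon\big)$, which tends to $+\infty$ precisely when $|x|$ exceeds a threshold of order $2e^{\pi/\sqrt3}\approx 12.3$. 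Choosing the safe value $18$ then yields $|\Theta|>|G|$ uniformly.

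Region (ii) is treated in the same spirit, using now that for $\mathrm{Re}\,x<0$ and $x=u+iv$ one has $|1+xq^j|\ge |\mathrm{Im}(xq^j)|=|v|\,q^j$, so $|v|$ plays the role formerly played by $|x|$ in the large-factor estimate; the larger constant $132$ reflects that in the left half-plane the remaining factors $\prod_{j\ge1}(1+q^{j-1}/x)$ can no longer be bounded below by $1$ and must be controlled simultaneously with $\prod_{j\ge1}(1-q^j)$. I expect the main obstacle throughout to be exactly this regime $q\to 1^-$: there all three infinite products degenerate and one must balance the decay $\prod_{j\ge1}(1-q^j)\to 0$ against the growth of $\prod_{j\ge1}(1+xq^j)$ in a uniform, quantitative way, and it is this balance that forces the explicit constants $18$ and $132$. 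Once the uniform lower bound $|\Theta|>|G|$ is in place on the two regions, the theorem follows, since $\theta=\Theta-G$ cannot vanish there.
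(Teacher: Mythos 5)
First, a point of comparison: the paper itself contains no proof of Theorem~\ref{tmcplxpos} --- it is quoted from \cite{KoPMD} --- so your argument cannot be measured against an internal proof and must stand on its own. As a plan it is sensible: the splitting $\theta =\Theta -G$, the triple product (\ref{eqJac}), the bound $|G|\le 1/(|x|-1)$, and the goal $|\Theta |>|G|$ on the two complementary regions are all reasonable, and your computation of the critical radius $2e^{\pi /\sqrt{3}}\approx 12.3<18$ in the right half-plane is correct. But already in region (i) there is a gap: the claim that for $q$ ``bounded away from $1$'' the constant $\prod _{j\ge 1}(1-q^j)$ exceeds $1/(|x|-1)\le 1/17$ is false once $q$ is above roughly $0.7$ (at $q=0.7$ the Euler product is about $0.042<1/17\approx 0.059$, at $q=0.8$ about $0.003$), while your treatment of large $q$ is purely asymptotic: the Dedekind-eta asymptotics and $J\sim \log (|x|/2)/\varepsilon$ (with $q=e^{-\varepsilon }$) yield $|\Theta |>|G|$ only for $q$ beyond some unspecified threshold, and nothing shows that this threshold meets the range where the crude bound works. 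Since the theorem is asserted for \emph{every} $q\in (0,1)$, these asymptotics must be turned into explicit inequalities valid on the whole interval; the rigorous bound $\bigl(\prod _{j\ge 1}(1-q^j)\bigr)(|x|/2)^Jq^{J(J+1)/2}$ does appear numerically to stay above $1/17$ for $|x|=18$ and all $q$, but verifying this uniformly (including at the jumps of the floor function defining $J$) is exactly the work the proposal leaves undone.

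The serious gap is region (ii). That region contains points with $|\mathrm{Re}\,x|$ arbitrarily large and $|\mathrm{Im}\,x|$ as small as $132$ (say $x=-10^6+132i$), so the ratio $|x|/|v|$, $v:=\mathrm{Im}\,x$, is unbounded, and three effects must then be balanced which your sketch does not set up: (a) the ``dangerous'' factors $1+xq^j$ with $q^j|x|\approx 1$ admit only the lower bound $q^j|v|\approx |v|/|x|$, which is arbitrarily small, and there are about $(\log 4)/\log (1/q)$ of them, so their product costs roughly $\exp \bigl(-\log 4\cdot \log (4|x|/|v|)/(1-q)\bigr)$; (b) the tail factors with $q^j|x|$ small cannot be bounded below by $1$ in the left half-plane, and bounding each by $1/2$ gives a product diverging to $0$, so one needs estimates of the type $\prod (1-q^j|x|)\ge \exp (-c/(1-q))$, another exponentially small cost; (c) the large factors that must pay for (a) and (b) are those with $q^j|x|\ge 2$, whose number is governed by $\log |x|$, not $\log |v|$. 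Your prescription ``let $|v|$ play the role of $|x|$'' indexes the large-factor estimate by $|v|$ and says nothing about the factors beyond that range, so as written the estimate simply does not cover such points. (The balance is plausibly achievable, since the gain $(\log (|x|/2))^2/2$ is quadratic in $\log |x|$ while the penalties in (a) and (b) are linear, but carrying it out \emph{is} the content of the proof in the left half-plane, and it is absent.) Finally, your explanation of the constant $132$ is misplaced: for $|x|\ge 132$ the factors $1+q^{j-1}/x$ satisfy $|q^{j-1}/x|\le 1/132$ and cost only about $\exp \bigl(-c/(132(1-q))\bigr)$, which is negligible in this bookkeeping; the genuine difficulty is the near-vanishing factors $1+xq^j$, not these.
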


In this subsection we prove the following theorem;

\begin{tm}\label{tmcross}
There are infinitely-many values of $q\in (0,1)$ (tending to $1$) for which 
a complex conjugate pair of zeros of $\theta (q,.)$ crosses the imaginary axis 
from left to right. Not more than finitely-many complex conjugate pairs 
of zeros cross the imaginary axis from right to left. 
\end{tm}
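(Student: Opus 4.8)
The plan is to study the restriction of $\theta$ to the imaginary axis, exactly as in the proof of Theorem~\ref{tmcomplexqneg}, and to exploit the crucial sign difference between the cases $q\in(-1,0)$ and $q\in(0,1)$. Setting $\theta^{\ddagger}(q,y):=\theta(q,iy)$, the same computation as in (\ref{eqdecomp1}) gives, for $q\in(0,1)$,
\begin{equation*}
\theta^{\ddagger}(q,y)=\theta(q^4,-y^2/q)+iqy\,\theta(q^4,-qy^2).
\end{equation*}
A complex conjugate pair of zeros crosses the imaginary axis precisely when $\theta^{\ddagger}(q,y)=0$ for some real $y\neq 0$, i.e.\ when both the real part $\mathrm{Re}\,\theta^{\ddagger}=\theta(q^4,-y^2/q)$ and the imaginary part $\mathrm{Im}\,\theta^{\ddagger}=qy\,\theta(q^4,-qy^2)$ vanish simultaneously. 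Unlike the negative-parameter case, here the argument $-y^2/q$ is \emph{negative}, so the real part is $\theta$ evaluated at a negative real point, and hence it genuinely has zeros. The first step is therefore to translate ``a pair crosses the axis'' into the system $\theta(q^4,-y^2/q)=0$ and $\theta(q^4,-qy^2)=0$.

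The second step is to count, as $q$ increases in $(0,1)$, how many real zeros each factor contributes and to show the two conditions are compatible infinitely often. For this I would invoke Theorem~\ref{tmknown1} applied to the function $u\mapsto\theta(q^4,u)$: as $q^4$ runs through $(0,1)$ it passes all spectral values $\tilde q_j$, so the number of negative real zeros of $\theta(q^4,\cdot)$ decreases by two each time $q^4$ crosses a $\tilde q_j$. A negative real zero $\xi$ of $\theta(q^4,\cdot)$ yields a point $y$ on the imaginary axis of $\theta(q,\cdot)$ exactly when $\xi=-y^2/q$ for some real $y$, i.e.\ automatically (since $\xi<0$, set $y=\sqrt{-q\xi}$). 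The remaining constraint is that the imaginary part also vanish there, equivalently that $-qy^2=q^2\xi$ be a zero of $\theta(q^4,\cdot)$ as well; so crossings correspond to values of $q$ for which $\theta(q^4,\cdot)$ has two negative zeros in the ratio $q^2$. A continuity/degree argument, tracking the zeros as $q$ grows and using that there are infinitely many spectral values $\tilde q_j\to1^-$, then produces infinitely many parameter values at which such a coincidence occurs, and the sign of $qy$ together with the orientation of the zero motion fixes the crossing direction as left-to-right.

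For the second assertion I would use Theorem~\ref{tmcplxpos}: once $\mathrm{Re}\,x\geq0$, every zero of $\theta(q,\cdot)$ lies in the half-disk $\{|x|<18,\ \mathrm{Re}\,x\geq0\}$, a bounded region. A right-to-left crossing would require a pair to re-enter the left half-plane through the imaginary axis; I would argue that the trapping provided by this half-disk, combined with the monotone drift of the zeros toward $x=1$ as $q\to1^-$ encoded in the curvilinear asymptotes $x=-q^{-s}$ of Theorem~\ref{tmgeom}, forbids all but finitely many such re-entries. Concretely, I expect to show that $\mathrm{Re}\,\theta^{\ddagger}(q,y)=\theta(q^4,-y^2/q)>0$ for all $y$ with $|y|$ bounded away from the relevant zero locations once $q$ is close enough to $1$, so that the real part can vanish only for the finitely many configurations allowed before the zeros settle into the right half-disk. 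The main obstacle is the second step: proving that the two vanishing conditions are met \emph{infinitely often} requires controlling the relative positions of the negative zeros of $\theta(q^4,\cdot)$ and their images under scaling by $q^2$ uniformly as $q\to1^-$, and ensuring that the crossing is genuinely transversal (so the pair actually moves to the right rather than touching and returning); this is where the delicate interplay between the spectrum $\{\tilde q_j\}$ and the ratio $q^2$ must be made quantitative.
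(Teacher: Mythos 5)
Your setup coincides with the paper's: the decomposition $\theta(q,iy)=\theta(q^4,-y^2/q)+iqy\,\theta(q^4,-qy^2)$ and the translation of ``a pair sits on the imaginary axis'' into ``$\theta(q^4,\cdot)$ has two negative zeros in the ratio $q^2$'' are exactly the paper's starting point. But the core of the proof is missing, and you say so yourself. The paper produces the coincidence infinitely often \emph{not} by any uniform control as $q\to1^-$, but by an interlacing-reversal argument tied to each single spectral value: writing $y^{\sharp}_{2j-1}<y^{\sharp}_{2j}$ for the relevant positive zeros of $f_1(q,y):=\theta(q^4,-y^2/q)$, the zeros of $f_2(q,y):=\theta(q^4,-qy^2)$ are $y^{\sharp}_i/q$ (since $f_1(q,y)=f_2(q,y/q)$); for $q$ small one has $y^{\sharp}_{2j-1}<y^{\sharp}_{2j-1}/q<y^{\sharp}_{2j}$ (from the approximations $y^{\sharp}_i\approx q^{-2i+1/2}$), whereas just below $q=(\tilde q_j)^{1/4}$, where $y^{\sharp}_{2j-1}$ and $y^{\sharp}_{2j}$ coalesce, the order is forced to be $y^{\sharp}_{2j}<y^{\sharp}_{2j-1}/q$. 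The intermediate value theorem then yields $q^{\dagger}_j\in(0,(\tilde q_j)^{1/4})$ with $y^{\sharp}_{2j}=y^{\sharp}_{2j-1}/q$, one for every $j$. This is precisely the ``delicate interplay between the spectrum and the ratio'' you flag as your obstacle; it is resolved by the coalescence at $\tilde q_j$, not by quantitative estimates. Likewise, the direction of crossing is not fixed by ``the sign of $qy$ together with the orientation of the zero motion'': the paper proves it by computing $\partial\theta/\partial x$ at the common zero, showing both its real part $K_1$ and imaginary part $K_2$ are negative (using the monotonicity of $f_1,f_2$ at that point and simplicity of the zero, which itself requires an argument ruling out a double zero of $f_1$ there), and then analyzing how the level sets $\mathrm{Re}\,\theta=0$ and $\mathrm{Im}\,\theta=0$ shift as $q$ increases through $q^{\dagger}_j$.

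Your proposed treatment of the second assertion would also fail as stated. Theorem~\ref{tmcplxpos} confines zeros with $\mathrm{Re}\,x\geq0$ to the half-disk $\{|x|<18,\ \mathrm{Re}\,x\geq0\}$, but nothing in that statement prevents a zero from leaving that half-disk through the imaginary axis; and the ``monotone drift toward $x=1$'' you invoke is not proved anywhere (the asymptotes $x=-q^{-s}$ of Theorem~\ref{tmgeom} concern real zeros as $q\to0^+$, not complex zeros as $q\to1^-$). The paper's actual argument is of a different nature: crossings correspond to intersections of the curves $\Gamma_j$ with curves of the form $x=-q^{-a}$; by Proposition~\ref{propKdagger} this intersection is a single point for all $j\geq(\kappa^{\triangle}+1)/2$, so each such pair crosses exactly once (left to right), while for the finitely many remaining $j$ the intersection of two analytic curves consists of at most finitely many points --- whence at most finitely many right-to-left crossings.
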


\begin{conj}\label{conjcross}
For every $j\in \mathbb{N}$, the complex conjugate pair born for 
$q=(\tilde{q}_j)^+$ crosses for some $q^*_j\in (\tilde{q}_j,1)$ 
the imaginary axis from left to right. No complex conjugate 
pair crosses the imaginary 
axis from right to left.
\end{conj}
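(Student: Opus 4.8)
The plan is to detect crossings of the imaginary axis through the restriction $\theta^{\ddagger}(q,y)=\theta(q,iy)$ and to control their \emph{direction} by an explicit sign condition. By (\ref{eqdecomp1}), which holds for any $q$, one has $\mathrm{Re}\,\theta(q,iy)=\theta(q^4,-y^2/q)$ and $\mathrm{Im}\,\theta(q,iy)=qy\,\theta(q^4,-qy^2)$. For $q\in(0,1)$ both arguments $-y^2/q$ and $-qy^2$ are negative, so — unlike the case $q\in(-1,0)$ treated in Theorem~\ref{tmcomplexqneg} — the real part can vanish. A complex conjugate pair sits on the imaginary axis at height $\pm y_0\neq 0$ for a value $q^{*}$ exactly when both $-y_0^2/q^{*}$ and $-q^{*}y_0^2$ are real (negative) zeros of $\theta((q^{*})^4,\cdot)$; this is the crossing condition to be tracked.

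To read off the direction, write the moving zero as $x_0(q)$ with $\theta(q,x_0(q))\equiv 0$ (subscripts below denote partial derivatives). Differentiating and substituting the differential equation (\ref{eqdiff}) gives
$$\frac{dx_0}{dq}=-\frac{\theta_q}{\theta_x}=-\frac{x_0^2\,\theta_{xx}}{2q\,\theta_x}-\frac{x_0}{q}.$$
At a crossing $x_0=iy_0$ the term $-x_0/q$ is purely imaginary, so taking real parts yields $\mathrm{Re}\,(dx_0/dq)=(y_0^2/2q)\,\mathrm{Re}(\theta_{xx}/\theta_x)$. Since $\theta(q,\cdot)$ is entire of order $0$ with $\theta(q,0)=1$, it admits the Hadamard factorisation $\theta(q,x)=\prod_k(1-x/r_k)$ over its zeros $r_k$, and at a simple zero $x_0=r_{k_0}$ the logarithmic-derivative identity gives $\theta_{xx}(q,x_0)/\theta_x(q,x_0)=2\sum_{k\neq k_0}1/(x_0-r_k)$. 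With $x_0=iy_0$ one obtains
$$\mathrm{Re}\,\frac{dx_0}{dq}=\frac{y_0^2}{q}\,\Sigma,\qquad \Sigma:=\sum_{k\neq k_0}\frac{-\mathrm{Re}\,r_k}{|iy_0-r_k|^2}.$$
Thus the crossing is from left to right precisely when $\Sigma>0$. In $\Sigma$ every real negative zero and every complex pair in the left half-plane contributes a positive term, each complex pair in the right half-plane a negative term, and the conjugate partner $-iy_0$ contributes $0$.

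The entire ``no right-to-left crossing'' half of the conjecture therefore reduces to the single inequality $\Sigma>0$ at every purely imaginary zero. The hard part — and, I expect, the reason the statement is only conjectural — is to prove this unconditionally. The positive terms are dominated by the nearest real negative zeros, which by part~(2) of Remarks~\ref{remsdivers2} stay to the left of $-6.095$, while the dangerous negative terms come from the finitely many right-half-plane pairs, all confined by Theorem~\ref{tmcplxpos} to $\{|x|<18\}$. The plan is to bound $\Sigma$ from below by splitting off the (manifestly positive, explicitly estimable) contribution of the real negative zeros and majorising the total negative contribution of the right-half-plane pairs via their confinement $|x|<18$ together with the a priori bound $|y_0|<18$ for the crossing height (which follows from Theorem~\ref{tmcplxpos}, since $iy_0$ has $\mathrm{Re}\geq 0$). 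Turning this balance into a rigorous strict inequality for all admissible configurations is the principal obstacle; the already-proved Theorem~\ref{tmcross} is precisely what one gets from the weaker, non-quantitative version of this argument (a net count of crossings rather than termwise positivity).

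For the ``every born pair crosses'' half, I would argue as follows once $\Sigma>0$ is available. Each pair is born at $q=\tilde q_j$ from the double real zero $y_j=-e^{\pi}+o(1)<0$ (Remarks~\ref{remsdivers1}(3)), hence enters the complex plane in the left half-plane. Because $\Sigma>0$ forbids any right-to-left crossing, the number of right-half-plane zeros is non-decreasing in $q$ and each pair can cross at most once. It then suffices to show that every complex pair eventually reaches $\{\mathrm{Re}\,x>0\}$, for which the natural target is that as $q\to 1^{-}$ all complex zeros lie in the right half-disk $\{|x|<18,\ \mathrm{Re}\,x>0\}$; this forces each born pair to have crossed, and by the monotonicity exactly once, at a well-defined $q^{*}_j\in(\tilde q_j,1)$. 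Establishing this limiting location requires an asymptotic analysis of $\theta(q,\cdot)$ as $q\to 1^{-}$, where the $\sim\pi/(2(1-q))$ complex pairs must crowd into the bounded region $|x|<18$; producing such a uniform description of the complex zeros near $q=1$ is the second, and more technical, obstacle.
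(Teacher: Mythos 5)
Your statement is Conjecture~\ref{conjcross}: the paper deliberately leaves it open and proves only the weaker Theorem~\ref{tmcross}, so there is no proof in the paper against which your argument can be matched; by your own account your proposal is a reduction to two open claims, not a proof, and it must be judged as such. That said, the reduction itself is correct as far as it goes and genuinely different in method from the paper. Differentiating $\theta (q,x_0(q))\equiv 0$ and eliminating $\theta _q$ via the heat-type equation (\ref{eqdiff}) does give ${\rm Re}\, (dx_0/dq)=(y_0^2/2q)\, {\rm Re}\, (\theta _{xx}/\theta _x)$ at $x_0=iy_0$, and the genus-zero Hadamard product is legitimate here (the zeros grow like $q^{-k}$, so $\sum _k1/|r_k|<\infty$), yielding your termwise criterion $\Sigma >0$ --- provided the crossing zero is simple, which the paper verifies only at its specific values $q^{\dagger}_j$ in the proof of Theorem~\ref{tmcross}. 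Where the paper argues locally at the common zero $B$ of $f_1$ and $f_2$ (computing the signs $K_1<0$, $K_2<0$ and following the motion of the sets ${\rm Re}\,\theta =0$ and ${\rm Im}\,\theta =0$), your $\Sigma$-criterion repackages the same derivative information globally, which is an attractive way to attack the ``no right-to-left crossing'' half.

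Two concrete gaps remain, one admitted and one not. First, $\Sigma >0$ is not merely technically unfinished: near $q=1$ there may be on the order of $j/4$ right-half-plane pairs crowded into $\{ |x|<18\}$ (Theorem~\ref{tmcplxpos}), the very region containing $iy_0$, so the negative terms of $\Sigma$ can be numerous and have small denominators; your plan (positive contribution of the real zeros to the left of $-6.095$, crude majorization of the negative terms by confinement) supplies no mechanism to dominate them, and termwise positivity could conceivably fail at some crossing even if the conjecture is true. Second, and more seriously, your proposed target for the ``every born pair crosses'' half --- that as $q\rightarrow 1^-$ \emph{all} complex zeros lie in the right half-disk --- is false: it contradicts the paper's own expectation $CR(q)/C(q)\rightarrow 1/4$ stated in the remarks following the conjecture, since new pairs are continually born on the negative real axis near $-e^{\pi}$ (part (3) of Remarks~\ref{remsdivers1}), so at every $q$ close to $1$ roughly three quarters of the existing pairs are still in the left half-plane. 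The statement you actually need is per-pair: the pair of fixed index $j$, tracked in $q$, must cross before $q=1$; this requires uniform asymptotics of an \emph{individual} complex zero as $q\rightarrow 1^-$, not a snapshot of the whole zero set at fixed $q$, and nothing in your outline produces it.
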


\begin{rems}
{\rm (1) Conjecture~\ref{conjcross} (if proved) 
combined with Theorem~\ref{tmcplxpos} 
would imply that all complex conjugate pairs, after having crossed 
the imaginary axis, remain in the half-disk $\{$~Re$~x\geq 0, 
|x|<18~\}$. Theorem~\ref{tmcross} allows to claim this about 
infinitely-many of these pairs.

(2) Recall that $1-\tilde{q}_j\sim \pi /2j$, see part (3) of 
Remarks~\ref{remsdivers1}. 
Denote by $C(q)$ the quantity of all complex conjugate pairs 
of $\theta$ for $q\in (0,1)$ and by $CR(q)$ the quantity of such pairs 
with nonnegative real part. Hence one can expect that 
$\lim _{q\rightarrow 1^-}(CR(q)/C(q))=1/4$. This can be deduced from the proof 
of Theorem~\ref{tmcross} below in which we use formula (\ref{eqdecomp2}). 
The values of the argument $v^4$ corresponding to the moments when a complex 
conjugate pair crosses the imaginary axis are expected to be 
of the form $1-\pi /2j+o(1/j)$ hence $v=1-\pi /8j+o(1/j)$. Thus asymptotically, 
as $j\rightarrow \infty$, for $q\in (1-\pi /2j,1-\pi /2(j+1)]$, 
one should have $C\sim j$ and $CR\sim j/4$. That is, 
crossing of the imaginary axis 
by a complex conjugate pair should occur four times 
less often than birth of such a pair.} 
\end{rems}

\begin{proof}[Preparation of the proof of Theorem~\ref{tmcross}]
We precede the proof of Theorem~\ref{tmcross} by the present observations 
which are crucial for the understanding of the proof. We shall be using 
equalities (\ref{eqdecomp1}), but with $q\in (0,1)$. The condition 
$\theta (q,iy)=0$ for some $y\in \mathbb{R}$ indicates  
the presence of a zero of $\theta$ on the imaginary axis. One can introduce 
the new variables $q_{\circ}:=q^4$ and $Y:=-y^2/q$. Hence, supposing that $y>0$, 
the right-hand side of 
(\ref{eqdecomp1}) is of the form 

$$\theta (q_{\circ},Y)+
i(q_{\circ})^{1/4}(-(q_{\circ})^{1/4}Y)^{1/2}
\theta (q_{\circ},(q_{\circ})^{1/2}Y)~.$$
Thus (writing $(q,x)$ instead of $(q_{\circ},Y)$) 
we have to consider the zero sets of the functions 
$\theta (q,x)$ and $\theta (q,\sqrt{q}x)$. These sets are shown, in solid 
and dashed lines respectively, on Fig.~\ref{twographs}. 

\begin{figure}[htbp]
\centerline{\hbox{\includegraphics[scale=0.7]{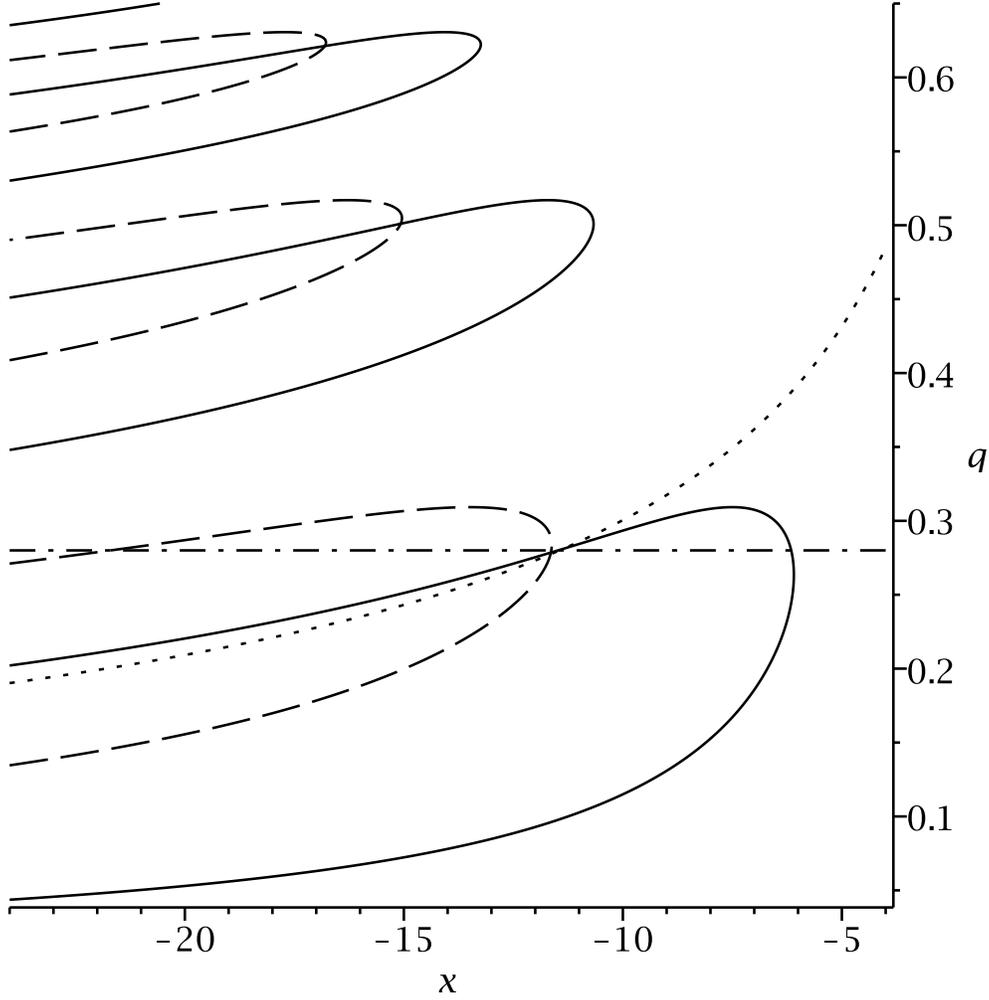}}}
    \caption{The real-zeros sets of $\theta (q,x)$ and $\theta (q,\sqrt{q}x)$.}
\label{twographs}
\end{figure}
Recall that the curves $\Gamma _j$ were defined in Theorem~\ref{tmgeom}. 
One can define by analogy the curves $\Gamma _j^*$ for the set 
$\{ (q,x)~|~\theta (q,\sqrt{q}x)=0\}$. On Fig.~\ref{twographs} one can see the 
intersection points of $\Gamma _j$ and $\Gamma _j^*$ for $j=1$, $2$ and $3$. 
Through the point $Z_1:=\Gamma _1\cap \Gamma _1^*$ passes 
a curve $x=-q^{-\gamma}$ with $\gamma \in (1,2)$. On Fig.~\ref{tmgeom} we 
represent this curve by dotted line and we draw by dash-dotted line 
the horizontal line $\mathcal{L}_1:\{ q=q_{\bullet}\}$ 
passing through the point $Z_1$. The line $\mathcal{L}_1$ intersects each of 
the curves $\Gamma _1$ and $\Gamma _1^*$ at two points one of which is $Z_1$ 
(the left of the two points for $\Gamma _1$ and the right of the two for 
$\Gamma _1^*$). If one considers the graphs of the functions 
(in the variable~$x$)
$\theta (q_{\bullet},x)$ and 
$\theta (q_{\bullet},\sqrt{q_{\bullet}}x)$, 
then they will look like the two graphs drawn in solid line above left on 
Fig.~\ref{PTconjpairs}; the point $Z_1$ will be the point $B$ on 
Fig.~\ref{PTconjpairs}. Nevertheless one should keep in mind that 
Fig.~\ref{PTconjpairs} represents the graphs of two functions whose arguments 
are of the form $-qy^2$ and $-y^2/q$, i.e. increasing of $y>0$ corresponds to 
the decreasing of the (negative) values of these arguments.  

The curve $x=-q^{-\gamma}$ and the line 
$\mathcal{L}_1$ were defined in relationship with $\Gamma _1$ and $\Gamma _1^*$, 
i.e. for $j=1$. One can consider their analogs defined for 
$j=2$, $3$, $\ldots$. Recall that the 
quantities $\kappa ^{\triangle}$ and $q^{\triangle}$ were defined 
in Remarks~\ref{remsKdagger}. It is only for $j$ sufficiently large 
($j\geq (\kappa ^{\triangle}+1)/2$) that we have proved that the intersection of 
the curve $\Gamma _j$ with each curve $x=-q^{-a}$, $a\geq \kappa ^{\triangle}$, 
is a point or is empty (see Proposition~\ref{propKdagger}). For smaller values 
of $j$ 
we can claim only that this intersection (of two analytic curves) 
consists of not more than a finite 
number of points. This explains the final sentence of Theorem~\ref{tmcross}.
\end{proof}

\begin{proof}[Proof of Theorem~\ref{tmcross}]
To study the restriction of $\theta$ to the imaginary axis we 
set again 

$$\theta ^{\ddagger}(q,y):=\theta (q,iy)=f_1(q,y)+iqyf_2(q,y)~~~\, ,~~~\,  
y\in \mathbb{R}~,$$ 
where $f_1(q,y):=\theta (q^4,-y^2/q)$ and $f_2(q,y):=\theta (q^4,-qy^2)$, 
see equalities (\ref{eqdecomp1}) (in which we assume that $q\in (0,1)$).   
For $q$ close to $0$, the zeros 
of $\theta (q,x)$ are close to the numbers $-q^{-j}$, $j\in \mathbb{N}$.    
More precisely, 
for $q\in (0,0.108]$, there is a simple zero of $\theta$ of the form 
$-1/(q^j\Delta _j)$ with $\Delta _j\in [0.2118,1.7882]$, and all 
these zeros are distinct, see Theorem~2.1 in~\cite{KoPRSE1}. 
Thus the zeros of $f_1(q,y)$ 
(resp. of $f_2(q,y)$) 
are close to the quantities $\pm q^{-2j+1/2}$ (resp. $\pm q^{-2j-1/2}$); 
hence the positive (resp. negative) zeros of $f_1$ interlace 
with the positive (resp. negative) zeros of $f_2$.

For $q>0$ small enough, all zeros of $\theta (q,.)$ are real negative; 
hence all zeros of $f_1$ are real. For such values of $q$, we denote 
the positive zeros of 
$f_1$ by $y^{\sharp}_j$, $y^{\sharp}_j<y^{\sharp}_{j+1}$ 
($y^{\sharp}_j$ is close to $q^{-2j+1/2}$). As $q$ increases, 
these zeros depend continuously on $q$ and as we will see below, 
certain couples of them, for some values of $q$, 
coalesce and form complex conjugate pairs. Thus their indices are meaningful 
only till the value of $q$ corresponding to the moment of confluence.

As 
$f_1(q,y)=f_2(q,y/q)$, the positive zeros of $f_2$ equal 
$y^{\sharp}_j/q$. 
Consider the zeros $y^{\sharp}_{2j-1}$ and $y^{\sharp}_{2j}$ of $f_1$ and the zeros 
$y^{\sharp}_{2j-1}/q$ and $y^{\sharp}_{2j}/q$ of $f_2$. 
For values of $q$ close to $0$, 
they satisfy the following inequalities 
(we indicate in the second row the powers of $q$ 
to which they are approximatively equal for $q$ close to $0$):

\begin{equation}\label{ineqy1}
\begin{array}{cccccccc}
y^{\sharp}_{2j-1}&<&y^{\sharp}_{2j-1}/q&<&y^{\sharp}_{2j}&<&y^{\sharp}_{2j}/q&.\\ \\ 
q^{-4j+5/2}&&q^{-4j+3/2}&&q^{-4j+1/2}&&q^{-4j-1/2}&\end{array}
\end{equation}
As $q$ increases, for $q^4=\tilde{q}_j$ (i.e. for $q=(\tilde{q}_j)^{1/4}$), 
the zeros $y^{\sharp}_{2j-1}$ and $y^{\sharp}_{2j}$ of $f_1$ (hence the zeros 
$y^{\sharp}_{2j-1}/q$ and $y^{\sharp}_{2j}/q$ of $f_2$ as well) 
coalesce and then give 
birth to a complex conjugate pair. This means that for values of $q$ 
just before the moment of confluence one has 

\begin{equation}\label{ineqy2}
y^{\sharp}_{2j-1}<y^{\sharp}_{2j}<y^{\sharp}_{2j-1}/q<y^{\sharp}_{2j}/q~.
\end{equation}
Therefore there exists $q^{\dagger}_j\in (0,(\tilde{q}_j)^{1/4})$ for which one has 
$y^{\sharp}_{2j-1}<y^{\sharp}_{2j}=y^{\sharp}_{2j-1}/q<y^{\sharp}_{2j}/q$, 
i.e. the real and imaginary parts 
of $\theta$ have a common zero $y^{\sharp}_{2j}=y^{\sharp}_{2j-1}/q$. 
This is a simple 
zero both for $f_1$ and $f_2$ 
(hence $iy$ is a simple zero of $\theta (q^{\dagger}_j,.)$). 
Indeed, $y^{\sharp}_{2j}$ 
can be either a simple or 
a double zero of $f_1$; if it is a double one, then  
$y^{\sharp}_{2j}$ must coalesce with $y^{\sharp}_{2j-1}$ 
(this follows from part (2) of Theorem~\ref{tmknown1} and part (1) of 
Remarks~\ref{remsdivers1}); this happens for $q^4=\tilde{q}_j$ 
which contradicts $q^{\dagger}_j<(\tilde{q}_j)^{1/4}$.  
  
We have just shown that for $j\in \mathbb{N}$ (i.e. for infinitely-many 
values 
of $q=q^{\dagger}_j\in (0,1)$) the function $\theta$ has a simple 
conjugate pair of zeros 
on the imaginary axis. 
In what follows we can assume that $j\geq (\kappa ^{\triangle}+1)/2$, see 
Remarks~\ref{remsKdagger} 
and the above preparation of the proof of Theorem~\ref{tmcross}. 
Thus the quantity $q^{\dagger}_j$ is unique.
The real zeros $\xi _j$  
of $\theta (q,x)$ for $q\in (0,1)$ 
satisfy the following string of inequalities: 

$$-q^{-2j}<\xi _{2j}<\xi _{2j-1}<-q^{-2j+1}$$
(see equation (6) in \cite{KoBSM1}). This implies the inequalities 

$$-q^{-8j}<\tau _{2j}<\tau _{2j-1}<-q^{-8j+4}~,$$
satisfied by the zeros of $\theta (q^4,x)$, and as 
$y^{\sharp}_j=(-q\tau _j)^{1/2}$, the inequalities 

\begin{equation}\label{ineqy3}
\begin{array}{lcl}
q^{-4j+5/2}<y^{\sharp}_{2j-1}<y^{\sharp}_{2j}<q^{-4j+1/2}&&{\rm and}\\ \\ 
q^{-4j+3/2}<y^{\sharp}_{2j-1}/q<y^{\sharp}_{2j}/q<q^{-4j-1/2}&& 
\end{array}
\end{equation} 
hold true. Inequalities (\ref{ineqy3}) (see also (\ref{ineqy1}) and 
(\ref{ineqy2})) imply that 
\vspace{1mm}

{\em i)} the zero 
$y^{\sharp}_{2j}$ of $f_1$ can be equal to $y^{\sharp}_{2j-1}/q$ 
and to no other zero of $f_2$ and 

{\em ii)} the zero $y^{\sharp}_{2j-1}$ 
of $f_1$ can be equal to neither of the zeros of $f_2$. 
\vspace{1mm}

On Fig.~\ref{PTconjpairs} (above left, in solid line) 
we show the graphs of the functions 
$f_1(q^{\dagger}_j,.)$ and $f_2(q^{\dagger}_j,.)$ 
(they are denoted by ``Re'' and ``Im'' respectively). 
The points $A$, $B$ and $C$ indicate the positions of the zeros 
$y^{\sharp}_{2j-1}$, $y^{\sharp}_{2j}=y^{\sharp}_{2j-1}/q$ and $y^{\sharp}_{2j}/q$ 
respectively. By dotted lines we show these graphs for 
$q\in (q^{\dagger}_j,(\tilde{q}_j)^{1/4})$. 
We remind that (see the proof of Theorem~1 in \cite{KoBSM1}) as $q$ increases, 
the local minima of $\theta (q,.)$ 
go up; when $q$ runs over an interval $((\tilde{q}_j)^-,(\tilde{q}_j)^+)$, 
the two rightmost real zeros coalesce for $q=\tilde{q}_j$ and the function 
$\theta (\tilde{q}_j,.)$ has a local minimum at this double zero.

 \begin{figure}[htbp]
\centerline{\hbox{\includegraphics[scale=0.7]{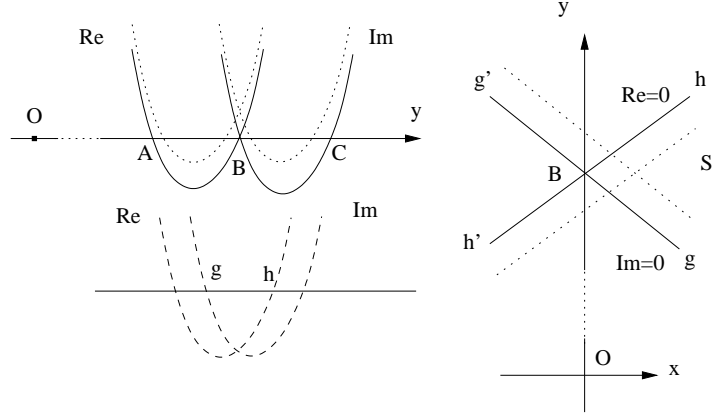}}}
    \caption{The complex conjugate pairs of $\theta$ on the imaginary axis.}
\label{PTconjpairs}
\end{figure}
Consider the function $\theta (q,iy+\varepsilon )$, i.e. the restriction of 
$\theta$ to a line in the $x$-plane parallel to the imaginary axis 
and belonging to the right half-plane. One checks directly that 

$$\begin{array}{cclc}
\theta (q,iy+\varepsilon )&=&\theta (q,iy)+
\varepsilon (\partial \theta /\partial x)(q,iy)
+o(\varepsilon )&{\rm where}\\ \\ 
(\partial \theta /\partial x)(q,iy)&=&K_1+iK_2&,\\ \\ 
K_1&:=&\sum _{\nu =0}^{\infty}(-1)^{\nu}(2\nu +1)
q^{(\nu +1)(2\nu +1)}y^{2\nu}&\\ \\ 
&=&2(-q^2y^2)(\partial \theta /\partial x)
(q^4,-qy^2)+q\theta (q^4,-qy^2)&,\\ \\ 
K_2&:=&\sum _{\nu =1}^{\infty}(-1)^{\nu}2\nu 
q^{\nu (2\nu +1)}y^{2\nu -1}&\\ \\ 
&=&2(y/q)(\partial \theta /\partial x)(q^4,-y^2/q)&.
\end{array}$$
The second term of $K_1$ vanishes at $B$. The 
first term equals 

$$qy(-2qy)(\partial \theta /\partial x)(q^4,-qy^2)=
qy(\partial f_2/\partial y)~.$$
As Im\,$\theta$ is decreasing at $B$, one sees that $K_1<0$. In the 
same way, 

$$K_2=-(-2y/q)(\partial \theta /\partial x)(q^4,-y^2/q)=
-\partial f_1/\partial y~.$$
Looking at the graph of 
Re\,$\theta$ at the point $B$ 
one sees that Re\,$\theta$ is increasing there and one 
concludes that $K_2<0$. 

On the right-hand of Fig.~\ref{PTconjpairs}, we 
represent in solid line the sets Re\,$\theta =0$ and 
Im\,$\theta =0$ (in the $x$-plane, 
close to the point $B$ of the imaginary axis). 
These are the segments $hh'$ and $gg'$ respectively. 
The true sets are in fact not straight lines, but 
arcs whose tangent lines at $B$ 
look like $hh'$ and $gg'$; as $q$ varies, these arcs and their tangent lines 
change continuously. 

Thus for $q=q^{\dagger}_j$, 
both quantities $K_1$ and $K_2$ are negative in the sector $S$. 
The graphs of Re\,$\theta (q,iy+\varepsilon )$ and 
Im\,$\theta (q,iy+\varepsilon )$ (considered as functions in $y$, 
for fixed $q$ and $\varepsilon$) are represented by dashed lines 
to the left below on Fig.~\ref{PTconjpairs}. 

As $q$ increases on $[q^{\dagger}_j,(q^{\dagger}_j)^+)$, 
the values of Re\,$\theta$ and Im\,$\theta$ along the arcs 
$hh'$ and $gg'$ become positive (this corresponds to the fact that close 
to the point $B$, 
the graphs of $f_1$ and $f_2$ are above the $y$-axis for $q=(q^{\dagger}_j)^+$, 
see the dotted graphs above left on Fig.~\ref{PTconjpairs}). Hence the 
sets  Re\,$\theta =0$ and 
Im\,$\theta =0$ shift as shown by dotted line on the right of 
Fig.~\ref{PTconjpairs} (it would be more exact to say that the tangent 
lines to these sets at $B$ shift like this). That is, their intersection point 
is in the right half-plane and the complex conjugate pair 
crosses the imaginary axis from left to right.

\end{proof}


\begin{thebibliography}{Dillo 83}
\bibitem{AnBe} G. E. Andrews, B. C. Berndt,  
Ramanujan's lost notebook. Part II. Springer, NY, 2009.
\bibitem{BeKi} B. C. Berndt, B. Kim, 
Asymptotic expansions of certain partial theta functions. 
Proc. Amer. Math. Soc. 139:11 (2011), 3779--3788.
\bibitem{BFM} K.~Bringmann, A.~Folsom and A.~Milas, Asymptotic behavior 
of partial and false theta functions arising from Jacobi forms and regularized 
characters. J. Math. Phys. 58 (2017), no. 1, 011702, 19 pp.
\bibitem{BrFoRh} K. Bringmann, A. Folsom, R. C. Rhoades, 
Partial theta functions 
and mock modular forms as $q$-hypergeometric series, 
Ramanujan J. 29:1-3 (2012), 295-310.
http://arxiv.org/abs/1109.6560
\bibitem{CMW} T.~Creutzig, A.~Milas and S.~Wood, On regularised quantum 
dimensions of the singlet vertex operator algebra and false theta functions. 
Int. Math. Res. Not. IMRN 2017, no. 5, 1390–1432.
\bibitem{Ha} G.~H.~Hardy, 
On the zeros of a class of integral functions, 
Messenger of Mathematics, 34 (1904), 97--101.
\bibitem{Hu} J. I. Hutchinson, 
On a remarkable class of entire functions, 
Trans. Amer. Math. Soc. 25 (1923), 325--332.
\bibitem{KaLoVi} O.M. Katkova, T. Lobova and A.M. Vishnyakova, 
On power series 
having sections with only real zeros. 
Comput. Methods Funct. Theory 3:2 (2003), 425--441.
\bibitem{KoBSM1} V.~P.~Kostov, On the zeros of a partial theta function, 
Bull. Sci. Math. 137, No. 8 (2013) 1018-1030.
\bibitem{KoBSM2} V.~P.~Kostov, On the double zeros of a partial theta function, 
Bull. Sci. Math. 140, No. 4 (2016) 98-111.
\bibitem{KoDBAN1} V.~P.~Kostov, A property of a partial theta function, 
Comptes Rendus Acad. Sci. 
Bulgare 67, No. 10 (2014) 1319-1326.
\bibitem{KoDBAN2} V.~P.~Kostov, The closest to $0$ spectral number 
of the partial theta function, 
Comptes Rendus Acad. Sci. Bulgare 69, No. 9 (2016) 1105-1112.
\bibitem{KoFAA} V.~P.~Kostov, On the multiple zeros of a partial 
theta function, Funct. Anal. Appl. 
50, No. 2 (2016) 153-156.
\bibitem{KoPRSE1} V.~P.~Kostov, On the spectrum of a partial theta function, 
Proc. Royal Soc. Edinb. A 144, No. 5 (2014), 925-933.
\bibitem{KoPRSE2} V.~P.~Kostov, On a partial theta function and its spectrum, 
Proc. Royal Soc. Edinb. A 146, No. 3 (2016) 609-623.
\bibitem{KoRMC} V.~P.~Kostov, Asymptotics of the spectrum of partial theta 
function, Revista Mat. Complut. 27, No. 2 (2014) 677-684, 
DOI: 10.1007/s13163-013-0133-3.
\bibitem{KoPMD} V.~P.~Kostov, A domain containing all zeros of the partial 
theta function, 
Publicationes Mathematicae Debrecen (to appear). 
\bibitem{KoSh} V.~P.~Kostov and B.~Z.~Shapiro, Hardy-Petrovitch-Hutchinson's 
problem and partial theta function, Duke Math. J. 162, No. 5 (2013) 825-861.
\bibitem{Ost} I.~V.~Ostrovskii, 
On zero distribution of sections and tails of power series, 
Israel Math. Conf. Proceedings, 15 (2001), 297--310.
\bibitem{Pe} M.~Petrovitch, 
Une classe remarquable de s\'eries enti\`eres, 
Atti del IV Congresso Internationale dei Matematici, Rome (Ser. 1), 2 
(1908), 36--43. 
\bibitem{PoSz} G. P\'{o}lya, G. Szeg\H{o},
 Problems and Theorems in Analysis, Vol. 1, Springer, Heidelberg 1976.
\bibitem{So} A.~Sokal, 
The leading root of the partial theta function, 
Adv. Math. 229:5 (2012), 2603-2621. 
arXiv:1106.1003.
\bibitem{Wa} S. O. Warnaar, 
Partial theta functions. I. Beyond the lost notebook, 
Proc. London Math. Soc. (3) 87:2 (2003), 363--395.
\end{thebibliography}
\end{document}